\documentclass{amsart}

\usepackage{amssymb,amsmath}\usepackage{stmaryrd}
\usepackage{bbm}
\usepackage{graphics,graphicx}

\usepackage[latin1]{inputenc}
\usepackage{subfigure}

\graphicspath{{./Figures/}}

\newtheorem{theo}{Theorem}
\newtheorem{thm}[theo]{Theorem}
\newtheorem{lem}[theo]{Lemma}
\newtheorem{prop}[theo]{Proposition}
\newtheorem{claim}[theo]{Claim}
\newtheorem{Def}[theo]{Definition}
\newtheorem{cor}[theo]{Corollary}

\newenvironment{Remark}{
\refstepcounter{theo}
\smallbreak\noindent%
\textbf{Remark~\thetheo.}}%
{\medbreak}

\newcommand{\cacher}[1]{}
\newcommand{\pqda}{_{d,a}^{(p,q)}}
\newcommand{\pqde}{_{d,e}^{(p,q)}}
\newcommand{\pqdd}{_{d,d}^{(p,q)}}
\newcommand{\rsb}{_{2b}^{(2r,2s)}}
\newcommand{\rrb}{_{2b}^{(2r,2r)}}
\newcommand{\Brsbc}{_{2b,2c}^{(2r,2s)}}
\newcommand{\rsbc}{_{b,c}^{(r,s)}}

\newcommand{\pqd}{_{d}^{(p,q)}}
\newcommand{\pqdp}{_{d,p}^{(p,q)}}
\newcommand{\ppdp}{_{d,p}^{(p,p)}}
\newcommand{\aada}{_{d,a}^{(a,a)}}
\newcommand{\apda}{_{d,a}^{(a,p)}}
\newcommand{\aqda}{_{d,a}^{(a,q)}}

\newcommand{\apd}{_{d}^{(a,p)}}
\newcommand{\aqd}{_{d}^{(a,q)}}
\newcommand{\Bpqd}{_{2d}^{(2p,2q)}}

\def\cAaad{\cA_{d}^{(a,a)}}

\def\cAppd{\cA_{d}^{(p,p)}}

\def\G{G}

\newcommand{\wh}{\widehat}

\newcommand{\wt}[1]{\widetilde{#1}}
\newcommand{\ds}{\displaystyle}

\def\NN{\mathbb{N}}

\newcommand{\al}{\alpha}
\newcommand{\be}{\beta}
\newcommand{\eps}{\epsilon}
\newcommand{\si}{\sigma}

\def\bn{\bar{n}}
\newcommand{\weight}{w}
\newcommand{\mX}{\mathcal{X}}

\newcommand{\mO}{\mathcal{O}}

\newcommand{\wO}{\widetilde{\mO}}

\newcommand{\mB}{\mathcal{B}}
\newcommand{\mC}{\mathcal{C}}

\newcommand{\nb}{$\mathbb{N}$}
\newcommand{\zb}{$\mathbb{Z}$}

\newcommand{\vv}{\textrm{v}}
\newcommand{\ee}{\textrm{e}}
\newcommand{\ff}{\textrm{f}}

\newcommand{\MQ}{Q_M}

\newcommand{\fig}[3]{\begin{figure}[h!]\begin{center}\includegraphics[#1]{#2}\end{center}\caption{#3}\label{fig:#2}\end{figure}}

\def\gf{generating function }

\def\cA{\mathcal{A}}
\def\cB{\mathcal{B}}

\def\cW{\mathcal{W}}

\def\cW{\mathcal{W}}

\def\bbm{$b/(b\!-\!1)$}
\def\ddm{$d/(d\!-\!2)$}

\newcommand{\titre}[1]{\noindent \textbf{#1}}

\author[O. Bernardi and \'E. Fusy]{Olivier Bernardi$^{*}$ \and \'{E}ric Fusy$^{\dagger}$}
\thanks{$^{*}$Department of Mathematics, MIT, Cambridge, USA,
bernardi@math.mit.edu. Supported by NSF grant DMS-1068626, ANR project A3 and European project ExploreMaps.\\
$^{\dagger}$LIX, \'Ecole Polytechnique, Palaiseau, France, fusy@lix.polytechnique.fr.
Supported by the European project
ExploreMaps (ERC StG 208471)}

\title[Bijections for maps with prescribed degrees and girth]{Unified bijections for maps with prescribed degrees and girth}


\begin{document}


\begin{abstract}
This article presents unified bijective constructions for planar maps, with control on the face degrees and on the girth. Recall that the girth is the length of the smallest cycle, so that maps of girth at least $d=1,2,3$ are respectively the general, loopless, and simple maps. For each positive integer $d$, we obtain a bijection for the class of plane maps (maps with one distinguished root-face) of girth $d$ having a root-face of degree $d$. We then obtain more general bijective constructions for annular maps (maps with two distinguished root-faces) of girth at least $d$.

Our bijections associate to each map a decorated plane tree, and non-root faces of degree $k$ of the map correspond to vertices of degree $k$ of the tree. As special cases we recover several known bijections for bipartite maps, loopless triangulations, simple triangulations, simple quadrangulations, etc. Our work unifies and greatly extends these bijective constructions.

In terms of counting, we obtain for each integer~$d$ an expression for the generating function $F_d(x_d,x_{d+1},x_{d+2},\ldots)$ of plane maps of girth~$d$ with root-face of degree $d$, where the variable~$x_k$ counts the non-root faces of degree~$k$. The expression for~$F_1$ was already obtained bijectively by Bouttier, Di~Francesco and Guitter, but for $d\geq 2$ the expression of~$F_d$ is new. We also obtain an expression for the generating function $\G_{p,q}^{(d,e)}(x_d,x_{d+1},\ldots)$ of annular maps with root-faces of degrees $p$ and $q$, such that cycles separating the two root-faces have length at least $e$ while other cycles have length at least~$d$.

Our strategy is to obtain all the bijections as specializations of a single ``master bijection'' introduced by the authors in a previous article. In order to use this approach, we exhibit certain ``canonical orientations'' characterizing maps with prescribed girth constraints.
\end{abstract}

\maketitle

\section{Introduction}
A planar map is a connected graph embedded without edge-crossing in the sphere.
There is a very rich literature on the enumeration of maps, going back to the seminal work of Tutte~\cite{T62b,Tu63} using generating functions. The approach of Tutte applies to many families of maps (triangulations, bipartite maps, 2-connected maps, etc.) but involves some technical calculations (the \emph{quadratic method} or its generalizations~\cite{BJ06a}; see also~\cite{Eyn} for a more analytic approach). For many families of maps, the generating function turns out to be algebraic, and to have a simple expression in terms of the generating function of a family of trees. Enumerative results for maps can alternatively be obtained by a matrix integral approach~\cite{Bre}, an algebraic approach~\cite{Jackson:character-maps}, or a bijective approach~\cite{Schaeffer:these}.

In the bijective approach one typically establishes a bijection between a class of maps and a class of ``decorated'' plane trees (which are easy to count). This usually gives transparent proofs of the enumerative formulas together with algorithmic 
byproducts~\cite{Poulalhon:triang-3connexe+boundary}. Moreover this approach has proved very powerful for studying the metric properties of maps and solving statistical mechanics models on maps~\cite{BS03,BDG:blockedEdges}. There now exist bijections for many different classes of maps \cite{Boutt,BDFG:mobiles,Fusy:transversal,FuPoScL,Schaeffer:these,Sc97}.

In an attempt to unify several bijections the authors have recently defined a ``master bijection'' $\Phi$ for planar maps~\cite{BeFu10}. It was shown that for each integer $d\geq 3$ the master bijection $\Phi$ can be specialized into a bijection for the class of $d$-angulations of \emph{girth $d$} (the girth of a graph is the minimal length of its cycles). 
This approach has the advantage of unifying two known bijections corresponding to the cases $d=3$ \cite[Sec.~2.3.4]{Schaeffer:these} and $d=4$ \cite[Thm.~4.10]{FuPoScL}. More importantly, for $d\geq 5$ it gives new enumerative results which seem difficult to obtain by a non-bijective approach.


In the present article, we again use the ``master bijection strategy'' and obtain a considerable extension of the results in~\cite{BeFu10}. We first deal with \emph{plane maps}, that is, planar maps with a face distinguished as the \emph{root-face}. For each positive integer~$d$ we consider the class of plane maps of girth~$d$ having a root-face of degree~$d$. We present a bijection between this class of maps and a class of plane trees which is easy to enumerate. Moreover it is possible to keep track of the distribution of the degrees of the faces of the map through the bijection.  Consequently we obtain a system of algebraic equations specifying the (multivariate) generating function of plane maps of girth~$d$ having a root-face of degree~$d$, counted according to the number of faces of each degree. The case $d=1$ had previously been obtained by Bouttier, Di Francesco and Guitter~\cite{Boutt}.

Next we consider \emph{annular maps}, that is, plane maps
with a marked inner face. Annular maps have two girth parameters: 
the \emph{separating girth} and the \emph{non-separating girth} defined respectively as the 
 minimum length of cycles separating and not separating the root face from the marked inner face. For each positive integer~$d$, we consider the class of annular maps of non-separating girth at least~$d$ having separating girth equal to the degree of the root-face. We obtain a bijection between this class of maps and a class of plane trees which is easy to enumerate. Again it is possible to keep track of the distribution of the degrees of the faces of the map through the bijection. 
With some additional work we obtain, for arbitrary positive integers 
$d,e,p,q$, a 
system of algebraic equations specifying the multivariate 
generating function of rooted annular maps
of non-separating  girth at least~$d$, separating girth at least $e$, root-face of degree $p$, and marked inner face of degree $q$,  counted according to the number of faces of each degree.

\cacher{
We first consider \emph{plane maps}, that is, maps with one distinguished face called \emph{root-face}. 
For any positive integer~$d$, we describe a bijection between plane maps of girth~$d$ with a root-face of degree~$d$ and a specific family of decorated plane trees called \emph{$d$-branching mobiles}. The bijection is such that each non-root face of degree $k$ in the map ($k\geq d$ since the girth is~$d$) corresponds to a black vertex of degree $k$ in the mobile. For $d\geq 3$, the bijection specializes to the one in~\cite{BeFu10} when all faces have degree~$d$. Enumerating the mobiles then yields, 
for any integer~$d$, an explicit system of algebraic equations defining the series $F_d(x_d,x_{d+1},x_{d+2},\ldots)$ counting \emph{rooted maps} (plane maps with a distinguished corner in the root-face) of girth~$d$ with a root-face of degree~$d$, where $x_k$ marks the number of non-root faces of degree~$k$.
For $d=1$ (corresponding to unconstrained rooted maps) the system defining $F_1$ has already been obtained (bijectively) by Bouttier, Di Francesco and Guitter in~\cite{Boutt}.
For $d\geq 2$ the enumerative results and bijections are new, except in a few special cases (we actually unify the bijections for general maps~\cite{Boutt}, bipartite maps \cite{Sc97}, loopless triangulations \cite[Sec.~2.3.4]{Schaeffer:these}, simple triangulations \cite[Sec.~2.3.3]{Schaeffer:these}, and simple quadrangulations \cite[Thm.~4.10]{FuPoScL}).
We then consider \emph{annular maps}, that is, maps with two distinguished faces called \emph{root-faces}. Annular maps have two girth parameters: the \emph{separating girth} (minimal length of the cycles separating the two root-faces) and the \emph{non-separating girth} (minimal length of the other cycles).
For any positive integers $p,q,d,e$, our bijection gives an algebraic characterization of the series $\G_{p,q}^{(d,e)}(x_d,x_{d+1},\ldots)$ counting rooted annular maps (rooted means that there is a marked corner in each root-face) with root-faces of degrees $p$ and $q$, with non-separating girth at least~$d$ and separating girth at least~$e$, where the variable $x_k$ marks the number of non-root faces of degree~$k$. 
}

Using the above result, we prove a universal asymptotic behavior for the number of rooted maps of girth at least~$d$ with face-degrees belonging to a \emph{finite} set $\Delta$. Precisely, the number $c_{d,\Delta}(n)$ of such maps with $n$ faces satisfies $c_{d,\Delta}(n)\sim\kappa\,n^{-5/2}\gamma^n$ for certain computable constants $\kappa,\gamma$ depending on~$d$ and $\Delta$. 
This asymptotic behavior was already established by Bender and Canfield~\cite{BeCa94} in the case of bipartite maps without girth constraint (their statement actually holds for any set $\Delta$,
not necessarily finite). We also obtain a (new) closed formula for the number of rooted simple bipartite maps with given number of faces of each degree.


In order to explain our strategy, we must point out that the master bijection $\Phi$ is a mapping between a certain class of \emph{oriented maps} $\wO$ and a class of decorated plane trees. 
Therefore, in order to obtain a bijection for a particular class of maps $\mC$, one can try to define a ``canonical orientation'' for each map in $\mC$ so as to identify the class $\mC$ with a subset $\wO_\mC\subset\wO$ on which the master bijection $\Phi$ specializes nicely. This is the approach we adopt in this paper, and our main ingredient is a proof that certain (generalized) orientations, called \ddm\emph{-orientations}, characterize the maps of girth~$d$. A special case of \ddm-orientations was already used in \cite{BeFu10} to characterize $d$-angulations of girth~$d$. These orientations are also related to combinatorial structures known as \emph{Schnyder woods}~\cite{BeFu:Schnyder, Sc90}. 

\vspace{.2cm}

\noindent {\bf Relation with known bijections.}
The bijective approach to maps was greatly developed by Schaeffer~\cite{Schaeffer:these} after initial constructions by Cori and Vauquelin~\cite{CoriVa}, and Arqu\`es~\cite{Ar86}.
Most of the bijections for maps are between a class of maps and a class of decorated plane trees. These bijections can be divided into two categories: (A) bijections in which the decorated tree is a spanning tree of the map (and the ``decorations'' are part of the edges not in the spanning trees), and (B) bijections in which the decorated plane tree associated to a map $M$ has vertices of two colors black and white corresponding respectively to the faces and vertices of the map (these bicolored trees are called \emph{mobiles} in several articles)\footnote{This classification comes with two subtleties. First, there are two \emph{dual versions} for bijections of type~B: in one version the decorations of the mobiles are some dangling half-edges, while in the dual version the decorations are some labellings of the vertices; see \cite[Sec.~7]{OB:covered-maps}. Second, it sometimes happens that a bijection of type A can be identified with a ``degenerate form'' of a bijection of type B in which all the white vertices of the mobiles are leaves; see Section~\ref{sec:special}.}. The first bijection of type A is Schaeffer's construction for Eulerian maps~\cite{Sc97}. The first bijection of type~B is Schaeffer's construction for quadrangulations~\cite{Schaeffer:these} (which can be seen as a reformulation of~\cite{CoriVa}) later extended by Bouttier, Di Francesco and Guitter~\cite{BDFG:mobiles}. Bijections of both types requires one to first endow the maps with a ``canonical structure" (typically an orientation) characterizing the class of maps: \emph{Schnyder woods} for simple triangulations, \emph{2-orientations} for simple quadrangulations, \emph{Eulerian orientations} for Eulerian maps, etc. For several classes of maps, there exists both a bijection of type A and of type~B. For instance, the bijections~\cite{Sc97} and~\cite{BDFG:mobiles} both allow one to count bipartite maps.

The master bijection $\Phi$ obtained in \cite{BeFu10} can be seen as a meta construction for all the known bijections of type $B$ (for maps without matter). The master bijection is actually a slight extension of a bijection introduced by the first author in~\cite{OB:boisees} and subsequently reformulated in~\cite{OB:covered-maps} (and extended to maps on orientable surfaces). In \cite{OB:covered-maps} it was already shown that the master bijection can be specialized in order to recover the bijection for bipartite maps presented in~\cite[Sec.~2]{BDFG:mobiles}. 

In the present article, our bijection for plane maps of girth and outer face degree
equal to~$d$ 
generalizes several known bijections. 
In the case $d=1$ our bijection (and the derived
generating function expressions) coincides with the one described by Bouttier, Di Francesco and Guitter in~\cite{Boutt}. 
In the case $d=2$ (loopless maps), our bijection generalizes and unifies two bijections obtained by Schaeffer in the dual setting. Indeed the bijection for Eulerian maps described in~\cite{Sc97} coincides via duality with our bijection for $d=2$ applied to bipartite maps, and the bijection for bridgeless cubic maps described in~\cite[Sec.~2.3.4]{Schaeffer:these} (which is also described and extended in \cite{PS03a}) coincides via duality with our bijection for $d=2$ applied to triangulations. 
For all $d\geq 3$, our bijection generalizes the bijection for $d$-angulations of girth~$d$ given in \cite{BeFu10}. This includes the cases $d=3$ and $d=4$ (simple triangulations and simple quadrangulations) previously obtained in \cite[Thm.~4.10]{FuPoScL} and \cite[Sec.~2.3.3]{Schaeffer:these}. 
Lastly, a slight reformulation of our construction allows us to include the case $d=0$, recovering a bijection described in~\cite{BDFG:mobiles} for vertex-pointed maps.

In two articles in preparation~\cite{Bernardi-Fusy:Bijection-hypermaps,Bernardi-Fusy:Bijection-irreducible}, we further generalize 
the results presented here. In~\cite{Bernardi-Fusy:Bijection-hypermaps} we extend the master bijection to hypermaps and count hypermaps
with control on a certain girth parameter (which extends the definition of girth of a map) and on the degrees
of the hyperedges and of the faces. In~\cite{Bernardi-Fusy:Bijection-irreducible}, relying on more involved orientations, we
count so-called \emph{irreducible} maps (and hypermaps), and recover in particular the bijections 
for irreducible triangulations~\cite{Fusy:transversal} and for irreducible
quadrangulations~\cite{FuPoScL}.

\vspace{.2cm}

\noindent {\bf Outline.}
In Section~\ref{section:definitions} we gather useful definitions on maps and orientations. In Section~\ref{section:mobile}, we recall the master bijection introduced in~\cite{BeFu10} between a set $\wO$ of (weighted) oriented maps and a set of (weighted) mobiles. From there, our strategy is to obtain bijections for (plane and annular) maps with control on the girth and face-degrees by specializing the master bijection. As explained above, this requires the definition of some (generalized) orientations characterizing the different classes of maps. 

Section~\ref{sec:bij_girth} deals with the class of plane maps of girth~$d$ with root-face degree~$d$. We define a class of (weighted) orientations, called \ddm-orientations, and show that a plane map $M$ of root-face degree~$d$ has girth~$d$ if and only if it admits a \ddm-orientation. Moreover in this case there is a unique \ddm-orientation such that $M$ endowed with this orientation is in $\wO$.
The class of plane maps of girth~$d$ with root-face degree~$d$ is thus identified with a subset of $\wO$. Moreover, the master bijection $\Phi$ specializes nicely on this subset, so that for each $d\geq 1$ we obtain a bijection between plane maps of girth~$d$ with root-face degree~$d$, and a family of decorated plane trees called~$d$\emph{-branching mobiles} specifiable by certain degree constraints. Through this bijection, each inner face of degree $k$ in the map corresponds to a black vertex of degree $k$ in the associated mobile. Some simplifications occur for the subclass of bipartite maps when $d=2b$ (in particular one can use simpler orientations called \bbm-orientations) and our presentation actually starts with this simpler case.

In Section~\ref{sec:bij_girth_annular}, we extend our bijections to annular maps. More precisely, for any integers $p,q,d$ we obtain a bijection for annular maps with root-faces of degrees $p$ and $q$,  with separating girth $p$ and non-separating girth~$d$. The strategy parallels the one of the previous section.

In Section~\ref{sec:count}, we enumerate the families of mobiles associated to the above mentioned families of plane maps and annular maps. Concerning plane maps, we obtain, for each $d\geq 1$, an explicit system of algebraic equations characterizing the series $F_d(x_d,x_{d+1},x_{d+2},\ldots)$ counting rooted plane maps of girth~$d$ with root-face of degree~$d$, where each variable $x_k$ counts the non-root faces of degree $k$
(as already mentioned, only the case $d=1$ was known so far~\cite{Boutt}).  
Concerning annular maps, we obtain for each quadruple $p,q,d,e$ of positive integers, an expression for the series $\G\pqde(x_d,x_{d+1},\ldots)$ counting rooted annular maps of non-separating girth at least~$d$ and separating girth at least $e$ with root-faces of degrees $p$ and $q$, where the variable $x_k$ marks the number of non-root faces of degree $k$. 
From these expressions we obtain asymptotic enumerative results. 
Additionally we obtain a closed formula for the number of rooted simple bipartite maps with given number of faces of each degree, and give an alternative derivation of the enumerative formula obtained in~\cite{WaLe75} for loopless maps.

In Section~\ref{sec:special}, we take a closer look at the cases $b=1$ and $d=1,2$ of our bijections and explain the relations with bijections described in~\cite{Boutt,Schaeffer:these,Sc97}. We also describe a slight reformulation which allows us to include the further case $d=0$ and explain the relation with \cite{BDFG:mobiles}. 

In Section~\ref{sec:proof}, we prove the missing results about \ddm-orientations and \bbm-orientations for plane maps and annular maps.

\medskip

\section{Maps, biorientations and mobiles}\label{section:definitions}
This section gathers definitions about maps, orientations, and mobiles.\\

\titre{Maps.}
A \emph{planar map} is a connected planar graph embedded (without edge-crossing)
in the oriented sphere and considered up to continuous deformation.
The \emph{faces} are the connected components of the complement of the graph. A \emph{plane tree} is a map without cycles (it has a unique face). The numbers $v$, $e$, $f$ of vertices, edges and faces of a map are related by the \emph{Euler relation}: $v-e+f=2$. Cutting an edge~$e$ at its middle point gives two \emph{half-edges}, each incident to an endpoint of $e$ (they are both incident to the same vertex if $e$ is a loop). A \emph{corner} is the angular section between two consecutive half-edges around a vertex. The \emph{degree} of a vertex or face $x$, denoted $\deg(x)$, is the number of incident corners. A $d$-\emph{angulation} is a map such that every face has degree~$d$. \emph{Triangulations} and \emph{quadrangulations} correspond to the cases $d=3$ and $d=4$ respectively. The \emph{girth} of a graph is the minimum length of its cycles.
Obviously, a map of girth~$d$ does not have faces of degree less than~$d$. Note that a map is \emph{loopless}
if and only if it has girth at least~$2$ and is \emph{simple} (has no loops or multiple edges) if and only if it has girth at least~3.
A graph is \emph{bipartite} if its vertices can be bicolored in such a way that every edge connects two vertices of different colors. Clearly, the girth of a bipartite graph is even. Lastly, it is easy to see that a planar map is bipartite if and only if every face has even degree.

A \emph{plane map} (also called \emph{face-rooted map}) is a planar map with a marked face, called the \emph{root-face}. See Figure~\ref{fig:maps}(a).
We think of a plane map as embedded in the plane with the root-face taken as the (infinite) outer face. A \emph{rooted map} (also called \emph{corner-rooted map}) is a map with a marked corner, called the \emph{root}; in this case the \emph{root-face} and \emph{root-vertex} are the face and vertex incident to the root. The \emph{outer degree} of a plane (or rooted) map is the degree of the root-face. The faces distinct from the root-face are called \emph{inner faces}. The vertices, edges, and corners are called \emph{outer} if they are incident to the root-face and \emph{inner} otherwise. A half-edge is \emph{outer} if it lies on an outer edge, and is
\emph{inner} otherwise. 

\begin{figure}[h!]
\begin{center}
\includegraphics[width=.8\linewidth]{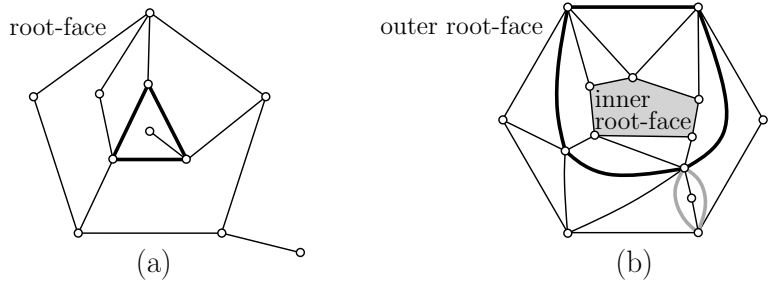}
\end{center}
\caption{(a) A plane map of outer degree $7$ and girth~$3$
(due to the cycle in bold edges). (b) An annular map of outer degree~$6$,
separating girth $4$ (due to the cycle in black bold edges) and non-separating girth~$2$
(due to the cycle in gray bold edges).}
\label{fig:maps}
\end{figure}

An \emph{annular map} is a plane map with a marked inner face.  See Figure~\ref{fig:maps}(b).
Equivalently, it is a planar map with two distinguished \emph{root-faces} called \emph{outer root-face} and \emph{inner root-face} respectively. There are two types of cycles in an annular map: those enclosing the inner root-face are called \emph{separating} and those not enclosing the inner root-face are called \emph{non-separating}. Accordingly, there are two girth parameters: the \emph{separating} (resp. \emph{non-separating}) girth is the minimal length of a separating (resp. non-separating) cycle. We say that an annular map is \emph{rooted} if a corner is marked in each of the root-faces.\\

\titre{Biorientations.}
A \emph{biorientation} of a map $G$, is a choice of an orientation for each half-edge of $G$: each half-edge can be either \emph{ingoing} (oriented toward the vertex), or \emph{outgoing} (oriented toward the middle of the edge). For $i\in\{0,1,2\}$, we call an edge $i$\emph{-way} if it has exactly $i$ ingoing half-edges. Our convention for representing 0-way, 1-way, and 2-way edges is given in Figure~\ref{fig:biorientation}(a). The ordinary notion of \emph{orientation} corresponds to biorientations having only 1-way edges. The \emph{indegree} of a vertex $v$ of $G$ is the number of ingoing half-edges incident to $v$.  Given a biorientation $O$ of a map $G$, a \emph{directed path} of $O$ is a path $P=(v_0,\ldots,v_k)$ of $G$ such that for all $i\in\{0,\ldots, k-1\}$ the edge $\{v_i,v_{i+1}\}$ is either 2-way or 1-way from $v_i$ to $v_{i+1}$. The orientation $O$ is said to be \emph{accessible} from a vertex $v$ if any vertex is reachable from $v$ by a directed path. If $O$ is a biorientation of a plane map, a \emph{clockwise circuit} of $O$ is a simple cycle $C$ of $G$ such that each edge of $C$ is either 2-way or 1-way with the interior of $C$ on its right. A \emph{counterclockwise circuit} is defined similarly. A biorientation of a plane map is said to be \emph{minimal} if it has no counterclockwise circuit. 

\begin{figure}[h!]
\begin{center}
\includegraphics[width=.65\linewidth]{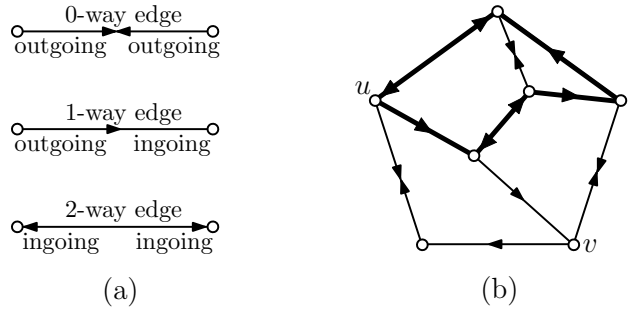}
\end{center}
\caption{(a) Convention for representing 0-way, 1-way and 2-way edges. (b)
A biorientation, which is not minimal (a counterclockwise
circuit is indicated in bold edges). This orientation is accessible from the vertex $u$ but not from the vertex $v$.}
\label{fig:biorientation}
\end{figure}

A biorientation is \emph{weighted} if a \emph{weight} is associated to each half-edge $h$ (in this article the weights will be integers). The \emph{weight} of an edge is the sum of the weights of its half-edges. The \emph{weight} of a vertex $v$ is the sum of the weights of the ingoing half-edges incident to $v$. The \emph{weight} of a face $f$, denoted $\weight(f)$, is the sum of the weights of the outgoing half-edges incident to $f$ and having $f$ on their right; see Figure~\ref{fig:biorientation-weighted}. 
A \emph{\zb-biorientation} is a weighted biorientation where the weight of each half-edge $h$ is an integer which is positive if $h$ is ingoing and non-positive if $h$ is outgoing. An \emph{\nb-biorientation} is a \zb-biorientation where the weights are non-negative (positive for ingoing half-edges, and zero for outgoing half-edges). A weighted biorientation of a plane map is said to be \emph{admissible} if the contour of the outer face is a simple cycle of 1-way edges with weights 0 and 1 on the outgoing and ingoing half-edges, and the inner half-edges incident to the outer vertices are outgoing. 

\begin{Def}\label{def:wO}
A \zb-biorientation of a plane map is said to be \emph{suitable} if it is minimal, admissible, and accessible from every outer vertex (see for instance Figure~\ref{fig:biorientation-weighted}(a)). 
We denote by $\wO$ the set of suitably \zb-bioriented plane maps.
\end{Def}

\begin{figure}[h!]
\begin{center}
\includegraphics[width=.7\linewidth]{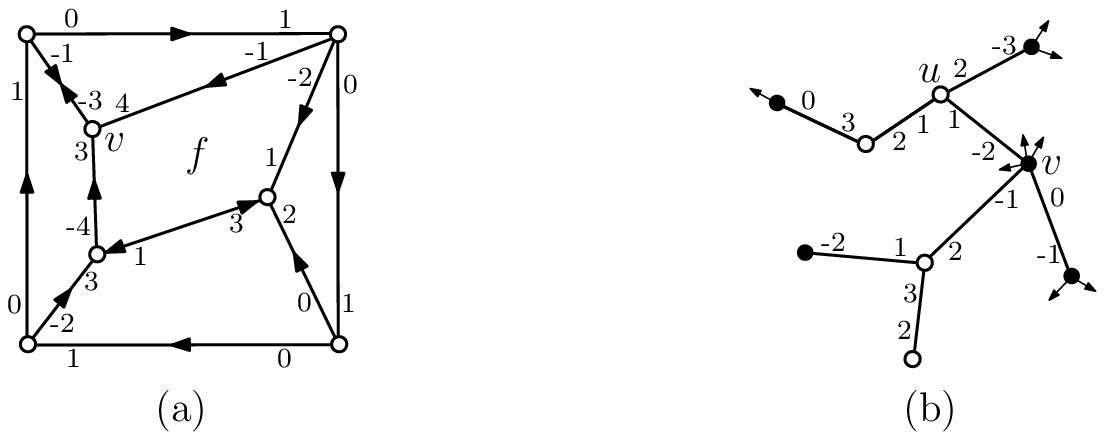}
\end{center}
\caption{(a) A suitably \zb-bioriented plane map. The vertex $v$ has weight $4+3=7$, the face $f$ has weight $-2-4=-6$. (b) A 
\zb-mobile. The white vertex $u$ has weight $1+1+2=4$, the black vertex $v$ has weight $-2-1=-3$ and has degree $6$.}
\label{fig:biorientation-weighted}
\end{figure}

\titre{Mobiles.}
A \emph{mobile} is a plane tree with vertices colored either black or white, and where the black vertices can be incident to some dangling half-edges called \emph{buds}. Buds are represented by outgoing arrows as in Figure~\ref{fig:biorientation-weighted}(b). The \emph{degree} of a black vertex is its number of incident half-edges (including the buds). The \emph{excess} of a mobile is the total number of half-edges incident to the white vertices minus the total number of buds. A \emph{\zb-mobile} is a mobile where each non-bud half-edge $h$ carries a \emph{weight} which is a positive integer if $h$ is incident to a white vertex, and a non-positive integer if $h$ is incident to a black vertex, 
see Figure~\ref{fig:biorientation-weighted}(b). The \emph{weight} of an edge is the sum of the weight of its half-edges. The \emph{weight} of a vertex is the sum of weights of all its incident (non-bud) half-edges.

\bigskip
\section{Master bijection between bioriented maps and mobiles}\label{section:mobile}
In this section we recall the ``master bijection'' $\Phi$ defined in~\cite{BeFu10} (where it is denoted $\Phi_-$) between the set $\wO$ of suitably \zb-bioriented plane maps and a set of \zb-mobiles. The bijection $\Phi$ is illustrated in Figure~\ref{fig:dual_kopening_ex}. It will be specialized in Sections~\ref{sec:bij_girth} and~\ref{sec:bij_girth_annular} to count classes of plane and annular maps.

\begin{Def}\label{def:master-bijections}
Let $M$ be a suitably \zb-bioriented plane map (Definition~\ref{def:wO}) with root-face $f_0$. We view the vertices of $M$ as \emph{white} and place a \emph{black} vertex $b_f$ in each face $f$ of $M$. The embedded graph $\Phi(M)$ with black and white vertices is obtained as follows:
\begin{itemize}
\item Reverse the orientation of all the edges of the root-face (which is a clockwise directed cycle of 1-way edges).
\item For each edge $e$, perform the following operation represented in Figure~\ref{fig:i-way-operation}. Let $h$ and $h'$ be the half-edges of $e$ with respective weights $w$ and $w'$. Let $v$ and $v'$ be respectively the vertices incident to $h$ and $h'$, let $c$, $c'$ be the corners preceding $h$, $h'$ in clockwise order around $v$, $v'$, and let $f$, $f'$ be the faces containing these corners.
\begin{itemize}
\item If $e$ is 0-way, then create an edge between the black vertices $b_f$ and $b_{f'}$ across $e$, and give weight $w$ and $w'$ to the half-edges incident to $b_{f'}$ and $b_f$ respectively. Then, delete the edge $e$.
\item If $e$ is 1-way with $h$ being the ingoing half-edge, then create an edge joining the black vertex $b_f$ to the white vertex $v$ in the corner $c$, and give weight $w$ and $w'$ to the half-edges incident to $v$ and $b_f$ respectively. Then, glue a bud on $b_{f'}$ in the direction of $c'$, and delete the edge $e$.
\item If $e$ is 2-way, then glue buds on $b_f$ and $b_{f'}$ in the directions of the corners $c$ and $c'$ respectively (and leave intact the weighted edge $e$).
\end{itemize}
\item Delete the black vertex $b_{f_0}$, the outer vertices of $M$, and the edges between them (no other edge or bud is incident to these vertices).
\end{itemize}
\end{Def}

\fig{width=11cm}{i-way-operation}{Local transformation of 0-way, 1-way and 2-way edges done by the bijection $\Phi$.}

\begin{figure}
\begin{center}
\includegraphics[width=\linewidth]{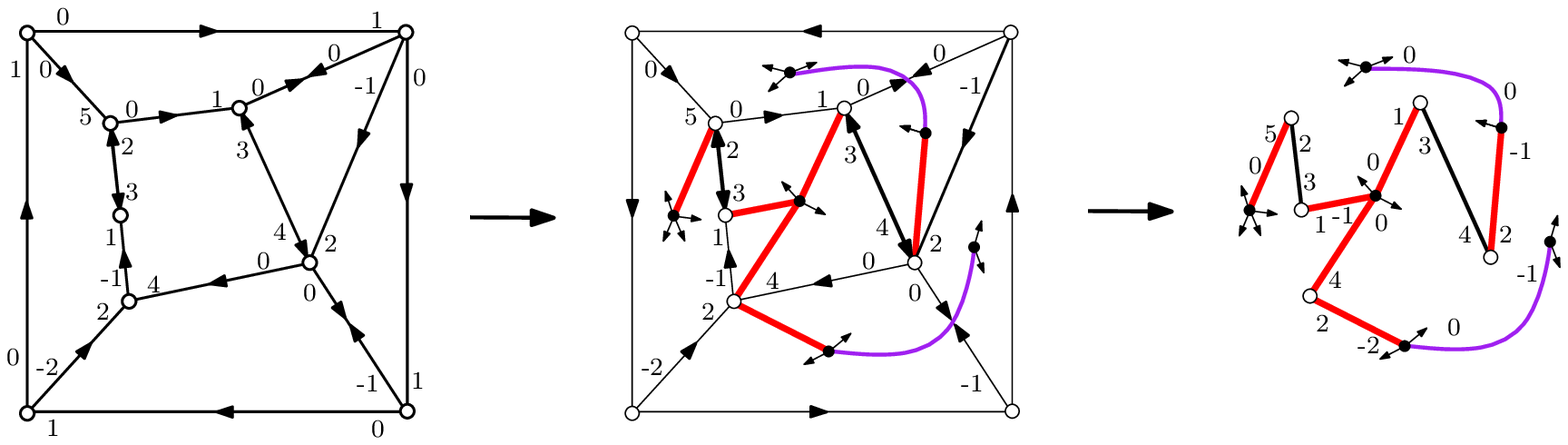}
\end{center}
\caption{The master bijection $\Phi$ applied to a suitably \zb-bioriented plane map.}
\label{fig:dual_kopening_ex}
\end{figure}

The following theorem is proved in~\cite{BeFu10}:

\begin{theo}\label{thm:kmaster-bijections}
The mapping $\Phi$ is a bijection between the set $\wO$ of suitably \zb-bioriented plane maps (Definition~\ref{def:wO}) and the set of \zb-mobiles of negative excess, with the parameter-correspondence given in Figure~\ref{fig:parameter-correspondence}.
\end{theo}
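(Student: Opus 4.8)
The plan is to prove the statement in two movements: first that $\Phi$ is \emph{well defined}, i.e. that $\Phi(M)$ is genuinely a \zb-mobile of negative excess for every $M\in\wO$, and then that $\Phi$ admits an explicit inverse $\Psi$ obtained by a ``closure'' procedure on mobiles. The local parameter bookkeeping (which yields the correspondence of Figure~\ref{fig:parameter-correspondence}) comes essentially for free once these two points are settled.

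For well-definedness, most requirements are local and follow by inspecting the three cases of the transformation in Figure~\ref{fig:i-way-operation}. The coloring is correct by construction (white vertices are the inner vertices of $M$, black vertices sit in the inner faces), and the sign convention on weights is preserved: the weights transferred to half-edges incident to white vertices come from ingoing half-edges of $M$, hence are positive, while those on half-edges incident to black vertices come from outgoing half-edges, hence are non-positive. One checks locally that the degree of $b_f$ (buds included) equals $\deg(f)$. The only genuinely global statement, and the point I expect to be the main obstacle, is that $\Phi(M)$ is a \emph{tree}. I would prove this by combining a count with a connectivity argument: Euler's relation applied to $M$, together with admissibility (the outer contour is a simple $1$-way cycle of length $\deg(f_0)$, and the deleted outer vertices and root-face black vertex are accounted for), shows that $\Phi(M)$ has exactly one fewer edge than it has vertices; connectivity of $\Phi(M)$ should then follow from accessibility from the outer vertices, and the edge count promotes connectivity to the tree property. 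The role of minimality (no counterclockwise circuit) is more subtle: I expect cycles of $\Phi(M)$ to correspond to counterclockwise circuits of $M$, so that minimality is precisely what guarantees acyclicity and, ultimately, that the closure below recovers a \emph{unique} orientation. Finally, the excess is computed by the same Euler-relation bookkeeping, from which one reads that it is strictly negative, its value being determined by the outer degree $\deg(f_0)$.

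To invert $\Phi$, I would reconstruct $M$ from a \zb-mobile $T$ of negative excess by a bud-matching closure, in the spirit of Schaeffer-type constructions. The signs of the weights already dictate how to read the tree edges back as edges of $M$: an edge between two white vertices becomes a $2$-way edge, an edge between a black and a white vertex becomes a $1$-way edge oriented toward the white endpoint, and an edge between two black vertices becomes a $0$-way edge, restoring in each case the original weights. It then remains to resolve the buds: sweeping around the tree, I would match each bud to the next available white corner, thereby creating the missing ingoing half-edges of the $1$-way edges, and let the unmatched buds (equal in number to the absolute value of the excess) close up into the outer $1$-way cycle together with the requisite outer vertices. The negative-excess hypothesis is exactly what makes this closure consistent, as it guarantees the surplus of buds needed to form the root-face, while planarity makes the matching canonical.

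The last step is to verify that $\Psi\circ\Phi=\mathrm{id}$ and $\Phi\circ\Psi=\mathrm{id}$. Since both maps are given by local rules plus a canonical global matching, this reduces to checking that the closure recovers exactly the adjacencies and orientations destroyed by $\Phi$, and conversely. I would argue this case by case on $0$-way, $1$-way and $2$-way edges, the crucial input being that minimality and accessibility certify that the canonical bud-matching of the closure coincides with the matching implicitly produced when $\Phi$ deletes the $1$-way edges. Once the two compositions are identities, the parameter-correspondence of Figure~\ref{fig:parameter-correspondence} follows directly from the local description of the transformation.
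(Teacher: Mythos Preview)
The paper does not prove Theorem~\ref{thm:kmaster-bijections}: the sentence introducing it reads ``The following theorem is proved in~\cite{BeFu10}'', so the paper's own ``proof'' is a citation. What the paper \emph{does} supply, immediately afterward, is a description of the inverse mapping $\Psi$ (the complete closure) and Proposition~\ref{prop:closure}, which states that $\Psi(\Phi(M))$ is the \emph{dual} of $M$; but the verification that $\Phi$ and $\Psi$ are mutual inverses is again deferred to~\cite{BeFu10}.

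Your outline is in the right spirit---well-definedness via local checks plus an Euler count, then an explicit closure inverse---but your description of the inverse does not match the paper's $\Psi$ and has a gap. In the paper's construction one does \emph{not} match buds to ``white corners''; rather one inserts a \emph{stem} in each corner preceding a black--white edge (clockwise around its black endpoint), then matches buds to stems counterclockwise, and the closed object is not $M$ itself but its \emph{dual}. Your direct reconstruction (``an edge between a black and a white vertex becomes a $1$-way edge toward the white endpoint'' and separately ``buds matched to white corners create the missing ingoing half-edges'') double-counts: in $\Phi$ a single $1$-way edge produces \emph{both} a black--white mobile edge \emph{and} a bud (on the other face's black vertex), so the inverse must pair these two pieces of data, which is precisely what the bud--stem matching does. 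Similarly, your Euler/connectivity argument for the tree property leaves the role of minimality unclear: if the edge count plus connectivity already forces a tree, minimality is not needed for acyclicity; in the actual proof minimality is what pins down a \emph{unique} orientation recoverable by the canonical closure (so that $\Psi$ lands back in $\wO$ and the two maps are genuinely inverse), not merely what makes $\Phi(M)$ acyclic.
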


\begin{figure}[h!]
\begin{center}
\includegraphics[width=8cm]{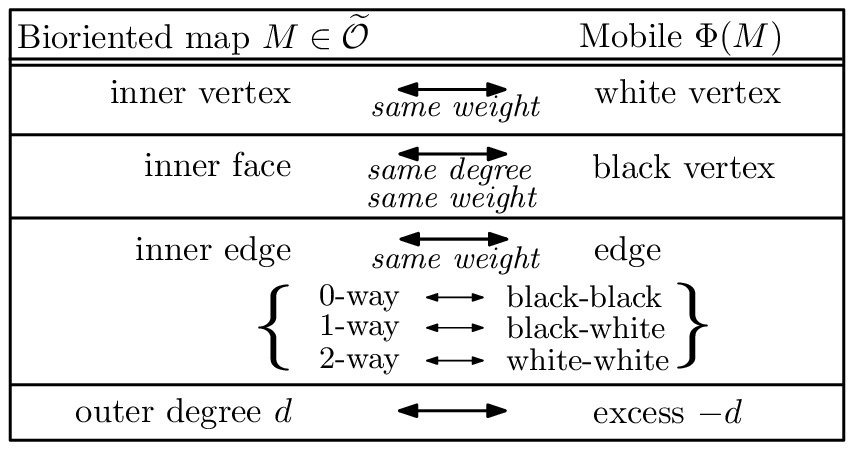}
\end{center}
\caption{Parameter-correspondence of the master bijection $\Phi$.}
\label{fig:parameter-correspondence}
\end{figure}

For $M$ a suitably \zb-bioriented plane map of outer degree~$d$, and $T=\Phi(M)$ the corresponding mobile, we call \emph{exposed} the~$d$ buds of the mobile $T=\Phi(M)$ created by applying the local transformation to the outer edges of $M$ (which have preliminarily been returned). The following additional claim, proved in~\cite{BeFu10}, will be useful for counting purposes.

\begin{claim}\label{claim:bij}
Let $M$ be a suitably \zb-bioriented plane map of outer degree~$d$, and let $T=\Phi(M)$ be the corresponding mobile. There is a bijection between the set $\vec{M}$ of all distinct corner-rooted maps obtained from $M$ by marking an outer corner (note that the cardinality of $\vec{M}$ can be less than~$d$ due to symmetries), 
and the set $\vec{T}$ of all distinct mobiles obtained from $T$ by marking one of the~$d$ exposed buds. Moreover, there is a bijection between the set $T_{\to}$ of mobiles obtained from $T$ by marking a non-exposed bud, and the set $T_{\to\circ}$ of mobiles obtained from $T$ by marking a half-edge incident to a white vertex.
\end{claim}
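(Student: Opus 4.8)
The plan is to deduce both bijections from a single structural feature of the master bijection $\Phi$ of Theorem~\ref{thm:kmaster-bijections}: it is built from purely local rules (reversal of the outer cycle, and the edge-local transformation of Figure~\ref{fig:i-way-operation}) that involve no arbitrary labelling or choice. Consequently $\Phi$ is equivariant under symmetries. Writing $\mathrm{Aut}(M)$ for the group of orientation-preserving automorphisms of the suitably \zb-bioriented plane map $M$ (preserving the biorientation, the weights and the root-face), the functoriality of the construction shows that $\Phi$ induces an isomorphism $\mathrm{Aut}(M)\simeq\mathrm{Aut}(T)$ with the automorphism group of the mobile $T=\Phi(M)$, and that any correspondence read off from the construction of $\Phi$ is equivariant for these two actions. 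Since a nontrivial orientation-preserving automorphism of a connected plane object fixes no corner and no half-edge (local rigidity plus connectivity), both groups act \emph{freely} on corners, half-edges and buds; hence the sets of \emph{distinct} marked objects in the statement are precisely the orbit sets of these free actions, and it suffices to produce equivariant bijections between the relevant sets of half-edges.

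For the first bijection I would first extract from the definition of $\Phi$ a canonical correspondence $\beta$ between the $d$ outer corners of $M$ and the $d$ exposed buds of $T$. By admissibility the outer boundary of $M$ is a simple cycle of $d$ $1$-way edges; after its reversal, each of these $d$ edges produces, via the local transformation, exactly one exposed bud, and composing with the cyclic identification of outer corners with outer edges yields $\beta$. As $\beta$ is manifestly equivariant for the two free actions, it descends to a bijection between the $\mathrm{Aut}(M)$-orbits of outer corners and the $\mathrm{Aut}(T)$-orbits of exposed buds, that is, between $\vec{M}$ and $\vec{T}$. Equivalently, marking an outer corner $c$ of $M$ corner-roots $M$ and singles out the bud $\beta(c)$, so $\Phi$ extends to a bijection between corner-rooted maps and exposed-bud-marked mobiles; the asserted bijection is its quotient by symmetries.

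For the second bijection the key is to exhibit a canonical, $\mathrm{Aut}(T)$-equivariant bijection $\gamma$ between the non-exposed buds of $T$ and the half-edges of $T$ incident to white vertices. Such a pairing is built into the inverse $\Phi^{-1}$: reconstructing $M$ from $T$ is a closure that matches each non-exposed bud to a unique white corner, namely the half-edge of the $1$-way or $2$-way edge of $M$ that this bud helps to recreate, the exposed buds being exactly the ones left unmatched (pointing into the root-face). Reading off this matching gives $\gamma$. The parameter-correspondence of Figure~\ref{fig:parameter-correspondence} guarantees equal cardinalities, since each interior $1$-way (resp.\ $2$-way) edge of $M$ contributes one (resp.\ two) non-exposed bud(s) and one (resp.\ two) white half-edge(s); hence $\gamma$ is a bijection, and being canonical it is equivariant and descends to the bijection $T_{\to}\leftrightarrow T_{\to\circ}$.

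The main obstacle is this second step. The counting of equal cardinalities is immediate from Figure~\ref{fig:parameter-correspondence}, but upgrading it to a \emph{canonical equivariant} bijection—rather than a mere equality of numbers—requires a careful analysis of the local rules of $\Phi^{-1}$: one must verify that the closure pairs buds with white corners in a well-defined manner, that the matched buds are exactly the non-exposed ones, and that the resulting map on half-edges is injective (hence bijective). This amounts to tracking, edge by edge, how each $1$-way and $2$-way edge of $M$ distributes its buds and white half-edges in $T$, the $2$-way case being the delicate one since it produces two buds and two white half-edges that must be paired consistently. Once $\beta$ and $\gamma$ are established, descending to the orbit sets is routine given the freeness of the two actions.
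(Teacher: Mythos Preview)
The paper does not give its own proof of this claim; it is quoted from~\cite{BeFu10}. Your outline is correct and would constitute a valid proof. The equivariance argument is the right framework: since $\Phi$ is built from purely local rules it intertwines $\mathrm{Aut}(M)$ with $\mathrm{Aut}(T)$, both groups act freely on half-edges and corners, and your correspondence $\beta$ between outer corners and exposed buds (one exposed bud per outer edge, via the local rule) is canonical and equivariant, hence descends to the bijection $\vec M\leftrightarrow\vec T$.

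For the second assertion you are working harder than necessary. You already have the ingredients for a shorter argument: the local rules of Figure~\ref{fig:i-way-operation} show that each inner $1$-way edge contributes one non-exposed bud and one white half-edge, each inner $2$-way edge contributes two of each, inner $0$-way edges contribute neither, and each outer edge contributes only an exposed bud (the black--white edge it produces joins $b_{f_0}$ to an outer vertex and is deleted). Hence the raw counts agree. Since the set of exposed buds is determined by $M=\Phi^{-1}(T)$ and $\Phi^{-1}$ is equivariant, this set is $\mathrm{Aut}(T)$-stable; freeness of the action then gives $|T_\to|=|T_{\to\circ}|$ directly. Your closure-based matching $\gamma$ is also correct (the unmatched buds in $\Psi$ are exactly those joined to the new root-vertex $v_0$, hence correspond under Proposition~\ref{prop:closure} to the outer edges of $M$, i.e., to the exposed buds), but what you flag as the ``main obstacle'' can simply be bypassed: a canonical equivariant bijection is not required, only equality of orbit counts.
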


Before we close this section we recall from~\cite{BeFu10} how to recover the map starting from a mobile (this description will be useful in Section~\ref{sec:special} to compare our bijection with other known bijections). Let $T$ be a mobile (weighted or not) with negative excess $\delta$. The corresponding \emph{fully blossoming} mobile $T'$ is obtained from $T$ by first inserting a \emph{fake} black vertex in the middle of each white-white edge, and then by inserting a dangling half-edge called \emph{stem} in each corner preceding a black-white edge $e$ in clockwise order around the black
extremity of $e$. A fully blossoming mobile is represented in solid lines in Figure~\ref{fig:inverse-master} (buds and stems are respectively indicated by outgoing and ingoing arrows). Turning in counterclockwise direction around the mobile $T'$, one sees a sequence of buds and stems. The \emph{partial closure} of $T'$ is obtained by drawing an edge from each bud to the next available stem in counterclockwise order around $T'$ (these edges can be drawn without crossings). This leaves $|\delta|$ buds unmatched (since the excess $\delta$ is equal to the number of stems minus the number of buds). The \emph{complete closure} $\Psi(T)$ of $T$ is the vertex-rooted bioriented map obtained from the partial closure by first creating a \emph{root-vertex} $v_0$ in the face containing the unmatched buds and joining it to all the unmatched buds, and then deleting all the white-white and black-white edges of the mobile $T$ and erasing the fake black vertices (these were at the middle of some edges); see Figure~\ref{fig:inverse-master}. 

\fig{width=\linewidth}{inverse-master}{The mapping $\Psi$. (a) A mobile $T$. (b) The fully blossoming mobile $T'$ (drawn in solid lines with buds represented as outgoing arrows, and stems represented as ingoing arrows) and its partial closure (drawn in dashed lines). (c) The complete closure $\Psi(T)$. (d) The dual of $\Psi(T)$.}

\begin{prop}[\cite{BeFu10}]\label{prop:closure}
Let $M$ be suitably \zb-bioriented plane map, let $T=\Phi(M)$ be the associated mobile, and let $N=\Psi(T)$ be its complete closure.   Then the plane map underlying $M$ is dual to the vertex-rooted map underlying $N$. 
\end{prop}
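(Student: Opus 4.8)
The plan is to show that the closure map $\Psi$ inverts the master bijection $\Phi$ up to duality; that is, that the underlying map of $N=\Psi(\Phi(M))$ is the dual of the underlying map of $M$. Since $\Phi$ is already known to be a bijection by Theorem~\ref{thm:kmaster-bijections}, the entire content lies in matching the vertices, edges and faces of $N$ with the faces, edges and vertices of $M$ respectively, which is exactly the incidence dictionary of planar duality.

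First I would pin down the vertex/face correspondence. By the definition of $\Phi$, the black vertices of $T=\Phi(M)$ are in bijection with the faces of $M$ (one black vertex $b_f$ per face $f$, with $b_{f_0}$ deleted). In the last step of $\Psi$ all white-white and black-white edges are deleted and the fake black vertices are erased, so the only surviving vertices of $N$ are the (real) black vertices together with the newly created root-vertex $v_0$. The root-vertex is placed in the region of the partial closure corresponding to the deleted $b_{f_0}$, i.e. to the root-face $f_0$. Hence the vertices of $N$ are in bijection with the faces of $M$, the root-vertex $v_0$ matching the root-face $f_0$; this is precisely the vertex/face part of the duality correspondence.

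Next I would establish the edge correspondence by tracing the three local operations of $\Phi$ and their reversal by $\Psi$, checking that each edge $e$ of $M$ gives rise to exactly one edge of $N$ crossing it, namely the dual edge $e^*$. For a $0$-way edge this is immediate: $\Phi$ draws a black-black edge across $e$, which survives in $N$ and is the dual edge. For a $1$-way edge, $\Phi$ produces a black-white edge together with a bud; in $\Psi$ the black-white edge contributes a stem while the bud is matched during the partial closure, and one must verify that the resulting closure edge crosses $e$. For a $2$-way edge, $\Phi$ keeps $e$ as a white-white edge with two adjacent buds; in $\Psi$ the inserted fake black vertex carries the two stems and $e$ is ultimately erased, so that the two buds reconstruct the dual crossing. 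The unmatched (exposed) buds, coming from the reversed outer edges, are joined to $v_0$ and realize the dual edges incident to the root-face's dual vertex. Euler's relation then confirms that the numbers of vertices, edges and faces of $N$ are those of the dual of $M$.

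The hard part will be verifying that the bud-to-stem matching of the partial closure (``each bud to the next available stem in counterclockwise order'') reconstructs exactly the dual adjacencies, and not merely the correct number of edges. Concretely, one must show that around each black vertex $b_f$ the cyclic sequence of incident half-edges in $N$ (the surviving black-black edges together with the closure edges arising from the buds and stems attached near $f$) matches the cyclic sequence of edges bounding the face $f$ of $M$, and dually that the faces of $N$ recover the vertices of $M$ with their correct rotation. This planarity-of-the-matching statement is the combinatorial heart, and it is where I would invoke (or reprove) the inversion argument of~\cite{BeFu10}, using minimality and accessibility of the suitable \zb-biorientation to guarantee that the counterclockwise closure pairs buds and stems consistently and without crossings. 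Once this is in place, the identification of the underlying map of $N$ with the dual of $M$ follows.
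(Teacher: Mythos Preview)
The paper does not prove this proposition at all: it is stated with attribution to~\cite{BeFu10} and no argument is given. So there is nothing in the present paper to compare your proof against; the authors simply import the result.

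Your sketch is a reasonable outline of how such a proof would go, and your identification of the vertex/face and edge/edge correspondences via the three local rules is correct. But notice that you yourself defer the only nontrivial step---that the counterclockwise bud-to-stem matching reproduces exactly the dual adjacencies---to ``the inversion argument of~\cite{BeFu10}''. At that point your proposal is not really an independent proof; it is the same citation the paper makes, wrapped in a restatement of what duality means. If you want a self-contained argument, the substantive work is precisely the lemma you flagged: showing that minimality (no counterclockwise circuit) and accessibility force the closure to pair each bud with the stem coming from the \emph{same} edge of $M$, so that the resulting edge of $N$ is the dual edge $e^*$ and not some other crossing. Without that, the edge-count via Euler's relation only tells you $N$ has the right \emph{number} of edges, not the right incidences.
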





\section{Bijections for maps with one root-face}\label{sec:bij_girth}
In this section, we present our bijections for plane maps.
For each positive integer~$d$, we consider the class $\mC_d$ of plane maps of outer degree~$d$ and girth~$d$. We define some \zb-biorientations that characterize the maps in $\mC_d$. This allows us to identify the class $\mC_d$ with a subset of $\wO$. We then specialize the master bijection to this subset of $\wO$ and obtain a bijection for maps in $\mC_d$ with control on the number
of inner faces in each degree $i\geq d$. For the sake of clarity we start with the
bipartite case, where the orientations and bijections are simpler.

\subsection{Bipartite case}\label{sec:bip}
In this section, $b$ is a fixed positive integer. 
We start with the definition of the \zb-biorientations that characterize the bipartite maps in~$\mC_{2b}$.
\begin{Def} \label{def:bbm-orientation}
Let $M$ be a bipartite plane map of outer degree $2b$ having no face of degree less than $2b$. A \emph{\bbm-orientation} of $M$ is an admissible \zb-biorientation such that every outgoing half-edge has weight 0 or -1 and
\begin{enumerate}
\item[(i)] each inner edge has weight $b-1$,
\item[(ii)] each inner vertex has weight $b$,
\item[(iii)] each inner face $f$ has degree and weight satisfying $\deg(f)/2+\weight(f)=b$.
\end{enumerate}
\end{Def}

Figure~\ref{fig:bij_bip} shows some \bbm-orientations for $b=2$ and $b=3$. Observe that for $b\geq 2$, a \bbm-orientation has no 0-way edges, while for $b\leq 2$ it has no 2-way edges. 
Definition~\ref{def:bbm-orientation} of \bbm-orientations actually generalizes the one given in~\cite{BeFu10} for $2b$-angulations. Note that \bbm-orientations of $2b$-angulations are in fact \nb-biorientations since Condition~(iii) implies that the weight of every outgoing half-edge is 0.

\begin{thm}\label{thm:bip}
Let $M$ be a bipartite plane map of outer degree $2b$ having no face of degree less than $2b$. Then $M$ admits a \bbm-orientation if and only if $M$ has girth $2b$. In this case, there exists a unique suitable \bbm-orientation of $M$.
\end{thm}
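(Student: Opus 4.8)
The plan is to prove the two implications of the equivalence separately, and then deduce uniqueness of the suitable orientation. Throughout, let $\ell$ denote the length of a cycle, and note first that since the outer face has degree $2b$ and $M$ has no face of degree less than $2b$, the girth of $M$ is automatically at most $2b$. So the real content is to show that admitting a \bbm-orientation forces every cycle to have length at least $2b$, and conversely that girth $2b$ guarantees such an orientation exists.

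For the direction ``\bbm-orientation $\Rightarrow$ girth $2b$'', I would fix a \bbm-orientation and an arbitrary cycle $C$, and consider the disk $D$ bounded by $C$ not containing the root-face. Writing $n_i,m_i,k$ for the numbers of vertices, edges and faces strictly inside $D$, Euler's relation gives $n_i-m_i+k=1$. I would then evaluate the total half-edge weight carried by the edges of $D$ in two ways: once via the edge condition (i), which makes it $(b-1)$ times the number of edges of $D$, and once by splitting half-edges into ingoing and outgoing ones, where the ingoing contributions are governed by the vertex condition (ii) (each interior vertex contributing exactly $b$, each boundary vertex contributing $b$ minus the weight leaking out of $D$) and the outgoing contributions are controlled using that outgoing half-edges have weight $0$ or $-1$. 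Feeding in the face condition (iii) summed over the $k$ interior faces, and simplifying with Euler's relation, the interior quantities should cancel and leave an inequality $\ell\geq 2b$ in which every slack term is manifestly nonnegative (each counting either edges leaving $D$ or half-edges of weight $-1$). Carrying out this bookkeeping correctly, in particular getting the boundary terms along $C$ to carry the right sign, is the main technical obstacle; it is essentially the computation already done in~\cite{BeFu10} for $2b$-angulations, here complicated by the presence of $2$-way edges and of nonzero face weights.

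For the converse, ``girth $2b$ $\Rightarrow$ existence'', I would recast the search for a \bbm-orientation as an integer feasibility problem: conditions (i)--(iii) prescribe a fixed total weight on each inner edge, inner vertex, and inner face, and since outgoing half-edge weights are confined to $\{0,-1\}$ the unknowns are discrete. The existence of such a weighted biorientation is equivalent, by a max-flow/min-cut (or Hall-type) argument, to a family of inequalities indexed by the obstructing sets, and on a planar map these sets are bounded by cycles; the inequality attached to a cycle $C$ is precisely the bound $\ell\geq 2b$ derived above. Thus the feasibility condition coincides with the girth condition, so girth $2b$ yields a \bbm-orientation. This is the step where one invokes (or reproves) the general existence theorem for $\ZZ$-biorientations with prescribed weights and checks that its hypotheses reduce exactly to ``no cycle shorter than $2b$''.

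Finally, for the unique suitable orientation, I would use the standard fact that the set of \bbm-orientations of a fixed plane map carries a distributive lattice structure whose covering relation is the reversal of a counterclockwise circuit together with the corresponding weight transfer; consequently there is a unique \emph{minimal} element, i.e. one with no counterclockwise circuit. By Definition~\ref{def:bbm-orientation} it is already admissible, so it remains only to check that it is accessible from every outer vertex, after which it lies in $\wO$ (Definition~\ref{def:wO}) and is the unique suitable \bbm-orientation, since any suitable orientation is minimal and hence equal to it. Accessibility of the minimal orientation I would prove by contradiction: a set of vertices unreachable from the outer boundary would either be enclosed by a directed structure whose existence violates the weight balance forced by (i)--(iii), or would produce a counterclockwise circuit, contradicting minimality. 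I expect the cycle-counting inequality shared by the first two steps to be the genuine crux, with the lattice and accessibility arguments being routine adaptations of known techniques.
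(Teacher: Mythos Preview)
Your necessity argument (``\bbm-orientation $\Rightarrow$ girth $2b$'') is essentially the paper's: it is exactly the Euler-relation bookkeeping of Lemma~\ref{lem:necd} (specialized via Corollary~\ref{cor:necess}), and it goes through without difficulty.

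The existence and uniqueness directions, however, have a real gap. You treat the search for a \bbm-orientation as a standard $\alpha/\beta$-orientation feasibility problem to which max-flow/Hall and the lattice-of-orientations machinery apply. But a \bbm-orientation is \emph{not} an $\alpha/\beta$-orientation: outgoing half-edges may carry weight $-1$, and crucially Condition~(iii) imposes a prescribed weight on each \emph{face}, not just on vertices and edges. The criterion of Lemma~\ref{lem:exists-alpha} and the unique-minimal-element result of Lemma~\ref{lem:unique_minimal} are stated for $\mathbb{N}$-biorientations with vertex and edge constraints only; neither applies directly here. Your assertion that ``on a planar map these [obstructing] sets are bounded by cycles'' and that the resulting inequalities collapse to $\ell\geq 2b$ is unjustified: even in the pure $\alpha/\beta$ setting the obstructions are arbitrary vertex subsets, and with a face constraint mixed in there is no obvious reduction.

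The paper's proof resolves exactly this difficulty, and it is the heart of the argument. One passes to an auxiliary map $Q_M$ (the superposition of $M$ with its inner-quadrangulation), on which the sought orientation becomes a genuine $\mathbb{N}$-valued $\alpha/\beta$-orientation (a \emph{regular orientation}, Definition~\ref{def:regular}): the face condition on $M$ is converted into a vertex condition on the added face-vertices. On $Q_M$ the standard tools do apply; existence is obtained by first completing $M$ to a $2b$-angulation in $\cB_r^{(b)}$ (where Lemma~\ref{lem:exists-alpha} can be checked directly, Proposition~\ref{prop:existsForArb}) and then contracting back. Uniqueness and suitability are then transferred from the minimal regular orientation of $Q_M$ to $M$ via a coherence argument and the map $\sigma$ (Lemmas~\ref{lem:minimal-regular}--\ref{lem:bijtau}). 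This detour through $Q_M$ is precisely the ``missing key lemma'' in your sketch.
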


The proof of Theorem~\ref{thm:bip} (which extends a result given in~\cite{BeFu10} for $2b$-angulations) is delayed to Section~\ref{sec:proof}. We now define the class of \zb-mobiles that we will show to be in bijection with bipartite maps in $\mC_{2b}$.
\begin{Def}
A \emph{$b$-dibranching mobile} is a \zb-mobile such that half-edges incident to black vertices have weight 0 or $-1$ and
\begin{enumerate}
\item[(i)] each edge has weight $b-1$,
\item[(ii)] each white vertex has weight $b$,
\item[(iii)] each black vertex $v$ has degree and weight satisfying $\deg(v)/2+\weight(v)=b$; equivalently a black vertex of degree $2i$ is adjacent to $i-b$ white leaves. 

\end{enumerate}
\end{Def}
 
The two ways of phrasing Condition (iii) are equivalent because a half-edge incident to a black vertex has weight -1 if and only if it belongs to an edge incident to a white leaf. Examples of $b$-dibranching mobiles are given in Figure~\ref{fig:bij_bip}. The possible edges of a $b$-dibranching mobile are represented for different values of $b$ in Figure~\ref{fig:b-dibranching-edges}. 

\fig{width=.9\linewidth}{b-dibranching-edges}{The possible edges of $b$-dibranching mobiles. The white leaves are indicated.}

\begin{figure}
\begin{center}
\includegraphics[width=\linewidth]{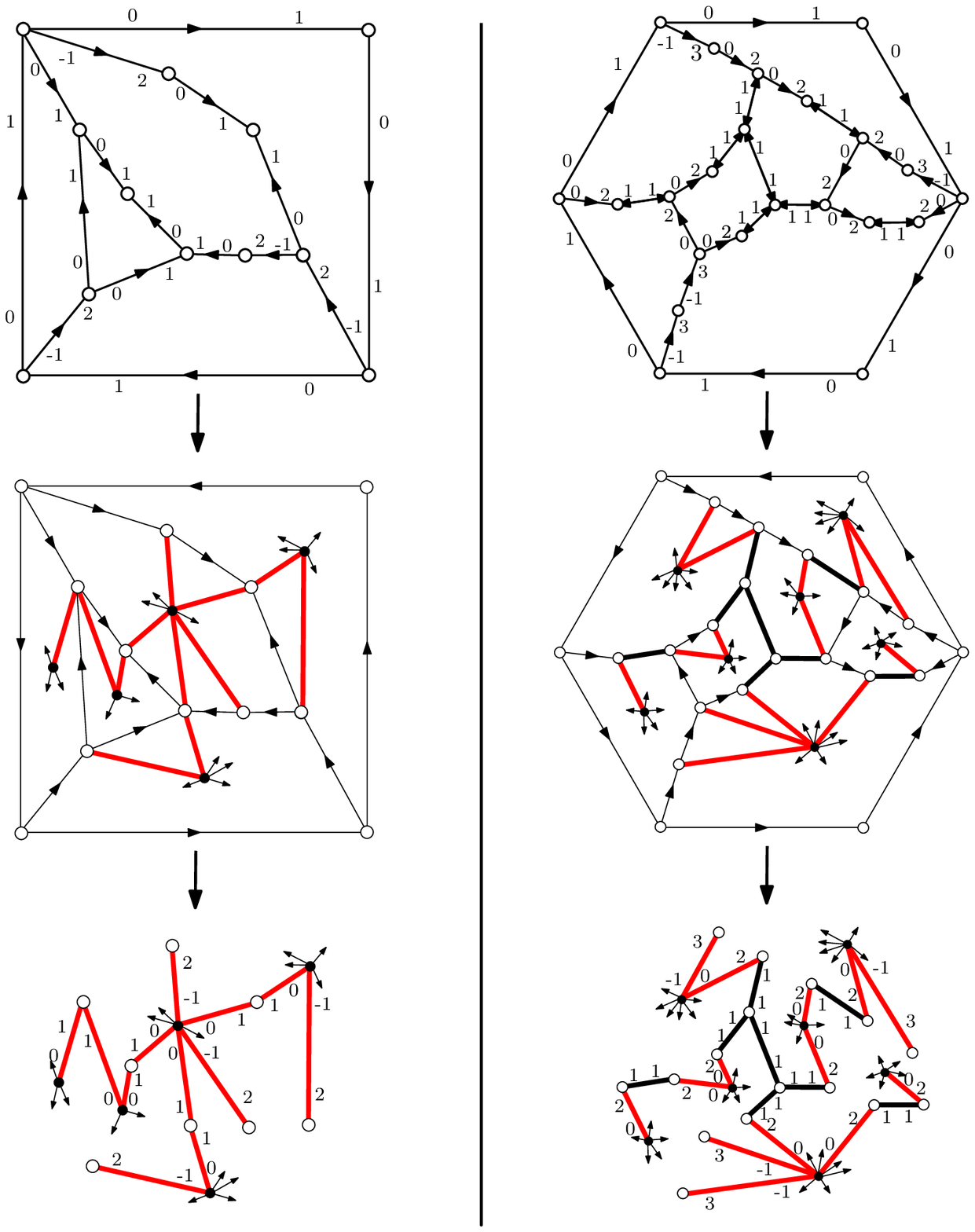}
\end{center}
\caption{Bijections for bipartite maps in the cases $b=2$ (left) and $b=3$ (right).
Top: a plane bipartite map of girth $2b$ and outer degree $2b$ endowed with its suitable \bbm-orientation.
Bottom: the associated $b$-dibranching mobiles.}
\label{fig:bij_bip}
\end{figure}

\begin{claim}\label{claim:excess_bip}
Any $b$-dibranching mobile has excess $-2b$.
\end{claim}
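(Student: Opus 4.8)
The plan is to establish the identity by a weighted double count of the edges of the mobile, using all three weight conditions at once. Write $n_\circ$ and $n_\bullet$ for the numbers of white and black vertices, $E$ for the number of edges, and $\beta$ for the number of buds. Since the underlying plane tree has $n_\circ+n_\bullet$ vertices, the Euler relation gives $E=n_\circ+n_\bullet-1$.

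First I would compute the total edge weight $\sum_e \weight(e)$ in two ways. By Condition (i) it equals $(b-1)E$. On the other hand, since each non-bud half-edge is incident to exactly one vertex and buds carry no weight, regrouping the half-edge weights by their incident vertex gives $\sum_e \weight(e)=\sum_v \weight(v)$. By Condition (ii) the white vertices contribute $b\,n_\circ$, and by Condition (iii), written as $\weight(v)=b-\deg(v)/2$, the black vertices contribute $b\,n_\bullet-\tfrac12 D$, where $D:=\sum_{v\text{ black}}\deg(v)$ is the total degree of the black vertices (counting buds, since those are included in the degree of a black vertex). Equating the two expressions and substituting $n_\circ+n_\bullet=E+1$ yields, after a one-line simplification, $D=2E+2b$.

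It then remains to translate this into the excess, which by definition equals $H_\circ-\beta$, where $H_\circ$ is the number of half-edges incident to white vertices. Counting the $2E$ half-edges belonging to tree edges according to the colour of their endpoint, and using that white vertices bear no buds while the black endpoints of tree edges number $D-\beta$, gives $2E=H_\circ+(D-\beta)$. Hence the excess is $H_\circ-\beta=2E-D=-2b$, as claimed. The computation is routine; the only point requiring care is to keep the three kinds of half-edges — buds, half-edges at white vertices, and the black halves of tree edges — consistently separated, remembering that buds contribute to black-vertex degrees but not to any weight. In particular, no case analysis on $b$ or on the edge types of Figure~\ref{fig:b-dibranching-edges} is needed.
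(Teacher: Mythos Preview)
Your proof is correct and follows essentially the same approach as the paper's own proof: a double count of the total half-edge weight using Conditions (i)--(iii), combined with the tree relation $E=n_\circ+n_\bullet-1$ and the half-edge count $2E=H_\circ+(D-\beta)$. The paper organizes the algebra slightly differently (it isolates the black-vertex weight sum $S$ and eliminates it between two equations rather than computing the total black degree $D$ first), but the ingredients and the logic are identical.
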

\begin{proof}
Let $T$ be a $b$-dibranching mobile. Let $\ee$ be the number of edges and $\beta$ be the number of buds. Let $v_b$ and $v_w$ be the number of black and white vertices respectively. Let $h_b$ and $h_w$ be the number of non-bud half-edges incident to black and white vertices, respectively. By definition, the excess $\delta$ of the mobile is $\delta=h_w-\beta$. Now, by Condition (iii) on black vertices, one gets $(h_b+\beta)/2+S=b\,v_b$, where $S$ is the sum of weights of the half-edges incident to black vertices. By Conditions (i) and (ii), one gets $\ee(b-1)=b\,v_w+S$. Eliminating $S$ between these relations gives $2\ee(b-1)+h_b+\beta=2b(v_b+v_w)$. Lastly, plugging $v_b+v_w=\ee+1$ and $2\ee=h_b+h_w$ in this relation, one obtains $h_w-\beta=-2b$.
\end{proof}

We now come to the main result of this subsection, which is the correspondence between the set $\mC_{2b}$ of bipartite maps and the $b$-dibranching mobiles. First of all,  Theorem~\ref{thm:bip} allows one to identify the set $\mC_{2b}$ of bipartite maps with the set of \bbm-oriented plane maps in $\wO$. We now consider the image of this subset of  $\wO$ by the master bijection $\Phi$.
In view of the parameter-correspondence induced by the master bijection $\Phi$ (Theorem~\ref{thm:kmaster-bijections}), it is clear that Conditions (i), (ii), (iii) of the \bbm-orientations correspond respectively to Conditions (i), (ii), (iii) of the $b$-dibranching mobiles. Thus, by Theorem~\ref{thm:kmaster-bijections}, the master bijection $\Phi$ induces a bijection between the set of \bbm-oriented plane maps in $\wO$ and the set of $b$-dibranching mobiles of excess $-2b$. Moreover, by Claim~\ref{claim:excess_bip} the constraint on the excess is redundant. We conclude:

\begin{theo}\label{thm:girth2b}
For any positive integer $b$, bipartite plane maps of girth $2b$ and outer degree $2b$ are in bijection
with $b$-dibranching mobiles. Moreover, each inner face of degree $2i$ in the map corresponds
to a black vertex of degree $2i$ in the mobile.
\end{theo}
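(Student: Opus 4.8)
The plan is to assemble the statement from the three ingredients already in place: the orientation characterization (Theorem~\ref{thm:bip}), the master bijection with its parameter-correspondence (Theorem~\ref{thm:kmaster-bijections}), and the excess computation (Claim~\ref{claim:excess_bip}). First I would use Theorem~\ref{thm:bip} to identify each bipartite plane map $M$ of girth $2b$ and outer degree $2b$ with a single suitably \bbm-oriented plane map. Since a suitable \bbm-orientation is in particular a suitable \zb-biorientation, this realizes the whole class as a well-defined subset $\wO_{2b}\subset\wO$. The bijectivity of the identification is exactly the content of Theorem~\ref{thm:bip}: every such map admits a \bbm-orientation (because it has girth $2b$) and the suitable one is unique, while conversely any \bbm-oriented map in $\wO$ has underlying map of girth $2b$ (recall that girth $2b$ already forces all faces to have degree at least $2b$, so the hypotheses of Theorem~\ref{thm:bip} are met).

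Next I would apply the master bijection $\Phi$ of Theorem~\ref{thm:kmaster-bijections} to $\wO_{2b}$ and read off, via the parameter-correspondence of Figure~\ref{fig:parameter-correspondence}, the conditions satisfied by the images. Under $\Phi$ each edge of $M$ becomes an edge of the mobile of the same weight, each inner vertex becomes a white vertex of the same weight, and each inner face becomes a black vertex whose degree and weight equal those of the face. Hence Conditions (i), (ii), (iii) of Definition~\ref{def:bbm-orientation} translate term by term into the three conditions defining a $b$-dibranching mobile: edge weight $b-1$, white-vertex weight $b$, and $\deg(v)/2+\weight(v)=b$ at each black vertex $v$. The sign constraints match as well, outgoing half-edges of weight $0$ or $-1$ corresponding precisely to the non-positive half-edge weights $0$ or $-1$ at black vertices. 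Therefore $\Phi$ restricts to a bijection from $\wO_{2b}$ onto the set of \zb-mobiles that satisfy the three $b$-dibranching conditions \emph{and} have negative excess.

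It then remains to discard the auxiliary excess constraint so that the target is exactly the set of \emph{all} $b$-dibranching mobiles. Theorem~\ref{thm:kmaster-bijections} only produces mobiles of negative excess, but Claim~\ref{claim:excess_bip} shows that every $b$-dibranching mobile automatically has excess $-2b<0$; hence the negativity requirement is vacuous on the $b$-dibranching side and the image of $\Phi$ is precisely the full set of $b$-dibranching mobiles. Composing the identification of the first step with this restriction of $\Phi$ yields the claimed bijection, and the degree statement falls out of the same entry of the parameter-correspondence used above, an inner face of degree $2i$ being sent to a black vertex of degree $2i$. The step needing the most care is the term-by-term matching of the second paragraph, namely checking that the weights and degrees tracked by Figure~\ref{fig:parameter-correspondence} are exactly the quantities appearing in Definition~\ref{def:bbm-orientation} and in the definition of $b$-dibranching mobiles; coupled with the observation, furnished by Claim~\ref{claim:excess_bip}, that no $b$-dibranching mobile is excluded by the negative-excess condition, so that the bijection lands on the full set rather than a proper subset.
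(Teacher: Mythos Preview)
Your proposal is correct and follows essentially the same route as the paper: identify the class with suitably \bbm-oriented maps via Theorem~\ref{thm:bip}, transport Conditions (i)--(iii) through the master bijection using the parameter-correspondence of Theorem~\ref{thm:kmaster-bijections}, and invoke Claim~\ref{claim:excess_bip} to see that the excess constraint is automatic. One small sharpening: the image of $\wO_{2b}$ under $\Phi$ is a priori the set of $b$-dibranching mobiles of excess exactly $-2b$ (since outer degree $2b$ corresponds to excess $-2b$, not merely negative excess), but Claim~\ref{claim:excess_bip} makes this distinction moot.
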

Figure~\ref{fig:bij_bip} illustrates the bijection on two examples ($b=2$, $b=3$). The case $b=1$ and its relation with~\cite{Sc97} is examined in more details in Section~\ref{sec:special}.

\subsection{General case}\label{sec:gen}
We now treat the case of general (not necessarily bipartite) maps. In this subsection, $d$ is a fixed positive integer. 

\begin{Def} \label{def:ddmorient}
Let $M$ be a plane map of outer degree~$d$ having no face of degree less than~$d$. A \emph{\ddm-orientation} of $M$ is an admissible \zb-biorientation such that every outgoing half-edge has weight 0, $-1$ or $-2$ and
\begin{enumerate}
\item[(i)] each inner edge has weight $d-2$,
\item[(ii)] each inner vertex has weight~$d$,
\item[(iii)] each inner face $f$ has degree and weight satisfying $\deg(f)+\weight(f)=d$.
\end{enumerate}
\end{Def}

Figure~\ref{fig:bij_bip} shows some \ddm-orientations for $d=3$ and $d=5$. The cases $d=1$ and $d=2$ are represented in Figures~\ref{fig:bij_d=1} and~\ref{fig:loopless2} respectively.
Definition~\ref{def:ddmorient} of \ddm-orientations actually generalizes the one given in~\cite{BeFu10} for $d$-angulations. Note that \ddm-orientations of $d$-angulations are in fact \nb-biorientations since Condition~(iii) implies that the weight of every outgoing half-edge is 0.

\begin{thm}\label{thm:gen}
Let $M$ be a plane map of outer degree~$d$ having no face of degree less than~$d$. Then, $M$ admits a \ddm-orientation if and only if $M$ has girth~$d$. In this case, there exists a unique suitable \ddm-orientation of $M$.
\end{thm}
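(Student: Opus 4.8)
The plan is to prove three statements in turn: (a) if $M$ admits a \ddm-orientation then $M$ has girth $d$; (b) if $M$ has girth $d$ then $M$ admits a \ddm-orientation; and (c) among the \ddm-orientations of $M$ exactly one is suitable. Since $M$ is admissible, the contour of its outer face is a simple cycle whose length equals the outer degree $d$, so the girth is always at most $d$; hence (a) reduces to showing that every cycle of $M$ has length at least $d$.

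For (a) I would use a regional counting argument. Let $C$ be any cycle and let $R$ be the closed region it bounds that does not contain the root-face; write $\ell$ for the length of $C$ and let $n_{\mathrm{int}}$, $E_R$, $F_R$ denote the vertices lying strictly inside $R$, the edges lying strictly inside $R$, and the faces of $M$ contained in $R$, all of which are inner. Summing condition~(iii) over $F_R$ and rewriting each $\deg(f)+\weight(f)$ as a sum over the half-edges having $f$ on their right, I would use condition~(i) (inner edges have weight $d-2$), the fact that both half-edges of an interior edge border faces of $F_R$, and that outgoing half-edges have nonpositive weight. Regrouping the ingoing half-edges by the vertex they point at and applying condition~(ii) (inner vertices have weight $d$) then collapses everything, via the Euler relation $n_{\mathrm{int}}-|E_R|+|F_R|=c$ (where $c\ge 1$ is the number of components of the graph carried by $R$), to the identity $d\,c=\sigma-\tau$. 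Here $\tau\ge 0$ gathers the weights of ingoing half-edges entering the vertices of $C$ from inside $R$, and $\sigma$ is a sum of $\ell$ contributions, one per edge of $C$, each equal to $1$ if the relevant half-edge is ingoing and at most $1$ if it is outgoing (since outgoing weights are nonpositive). Thus $\ell\ge\sigma=d\,c+\tau\ge d$. A pleasant feature is that this bound only uses nonpositivity of the weights on the edges of $C$, so it is insensitive to whether $C$ touches the outer boundary.

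For (b), the difficult direction, I would recast the existence of a \ddm-orientation as a feasibility problem for an integer circulation. Conditions (i)--(iii) are exactly conservation constraints --- fixed weight $d-2$ on each inner edge, fixed weight $d$ on each inner vertex, and prescribed deficiency $d-\deg(f)$ on each inner face --- together with box constraints on the half-edge weights ($\ge 1$ when ingoing, in $\{0,-1,-2\}$ when outgoing). Such a system lives on $M$ together with its dual and is governed by the max-flow--min-cut theorem (equivalently a Hall/Hakimi-type criterion): a feasible orientation exists if and only if a family of cut inequalities holds, and by planar duality the relevant minimal cuts are indexed by the cycles of $M$. The heart of this step is then to recognize each cut inequality as the inequality $\ell\ge d$ obtained in part~(a): running the identity $d\,c=\sigma-\tau$ in the reverse direction converts the girth hypothesis into feasibility of the circulation. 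I expect the main obstacle to be precisely this matching --- showing that the min-cuts correspond to cycles and that the boundary terms $\sigma$ and $\tau$ reproduce the cut values --- and it is the natural place to adapt and generalize the flow argument used for $d$-angulations in~\cite{BeFu10}.

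Finally, for (c) I would invoke the standard structure theory of biorientations with prescribed vertex-, edge-, and face-weights. Any two \ddm-orientations of $M$ carry identical weights, so their difference is a conservative circulation; in the planar case such circulations decompose into reversals of clockwise/counterclockwise circuits, so the set of \ddm-orientations of $M$ (nonempty by (b)) forms a distributive lattice, as in~\cite{BeFu10}, with a unique minimal element --- the one with no counterclockwise circuit. Since suitability includes minimality, at most one \ddm-orientation is suitable. It remains to show that this minimal \ddm-orientation is accessible from every outer vertex, which is the usual companion to minimality: if the set $U$ of vertices not reachable from the outer boundary were nonempty, then no $2$-way or inward $1$-way edge could cross its boundary, and the weight and admissibility conditions would then force a counterclockwise circuit along the boundary of $U$, contradicting minimality. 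Thus the minimal \ddm-orientation is suitable, and together with the uniqueness above this completes the proof.
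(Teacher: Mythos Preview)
Your part~(a) is correct and is essentially the argument of Lemma~\ref{lem:necd}.

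Parts~(b) and~(c), however, have a genuine gap, and it is the same one in both places. A \ddm-orientation is \emph{not} an $\alpha/\beta$-orientation in the sense of Lemma~\ref{lem:exists-alpha}: half-edge weights may be negative and, more seriously, condition~(iii) is a constraint on each \emph{face}, not only on vertices and edges. Your sentence ``such a system lives on $M$ together with its dual and is governed by the max-flow--min-cut theorem'' is the right instinct, but you have not actually built the auxiliary network, and the step you flag as ``the main obstacle'' --- matching the min-cuts with cycles of $M$ --- is exactly where the work is. The same issue bites in~(c): the distributive-lattice result you quote from~\cite{BeFu10} (Lemma~\ref{lem:unique_minimal}) is stated for $\alpha/\beta$-orientations with nonnegative weights and no face constraints, so it does not apply directly to the set of \ddm-orientations. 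And your accessibility argument (a counterclockwise circuit on the boundary of the unreachable set) is the standard one for ordinary orientations, but here the face constraint~(iii) intervenes and the argument does not obviously close; in the paper the corresponding step (Lemma~\ref{lem:minmintau}) is a nontrivial induction.

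The paper's route around these obstacles is rather different from yours. For $d\ge 2$ it first reduces to the bipartite case by subdividing every edge (the map $M'$ and the bijection $\tau$ of Lemma~\ref{lem:taubij}). For a bipartite map it then constructs an auxiliary map $\MQ$ --- the superposition of $M$ with its inner-quadrangulation --- on which the \bbm-orientations of $M$ correspond to honest $\alpha/\beta$-orientations called \emph{regular orientations} (Definition~\ref{def:regular} and Lemma~\ref{lem:regular-to-bbm}): the face constraint~(iii) becomes a \emph{vertex} constraint on the new face-vertices, and the negative weights are absorbed into the orientation of the $Q$-edges. On $\MQ$ the standard max-flow and lattice machinery (Lemmas~\ref{lem:exists-alpha} and~\ref{lem:unique_minimal}) applies verbatim; existence is obtained by first completing $M$ to a $2b$-angulation and then contracting (Propositions~\ref{prop:existsForArb} and~\ref{prop:exists_regular_ori}), and suitability is transferred back to $M$ via Lemmas~\ref{lem:minmintau} and~\ref{lem:bijtau}. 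The case $d=1$ is handled separately by a parity argument (Section~\ref{sec:proofb=1}). In short, the paper does carry out a flow argument, but only \emph{after} the reduction to $\MQ$ that turns the problem into a genuine $\alpha/\beta$-orientation problem; this reduction is the missing ingredient in your proposal.
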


\begin{Remark}\label{rk:bip-specialization} If $d=2b$ and $M$ is a bipartite plane map of outer degree~$d$ and girth~$d$, then the unique suitable \ddm-orientation of $M$ is obtained from its suitable \bbm-orientation by doubling the weight of every inner half-edge (since the \zb-biorientation obtained in this way is clearly a suitable \ddm-orientation).
\end{Remark}

The proof of Theorem~\ref{thm:gen} 
 is delayed to Section~\ref{sec:proof}. We now define the class of mobiles that we will show to be in bijection with $\mathcal{C}_d$. 

\begin{Def}\label{def:dbranching}
For a positive integer~$d$, a \emph{$d$-branching mobile} is a \zb-mobile such that half-edges incident to black vertices have weight 0, $-1$ or $-2$ and
\begin{enumerate}
\item[(i)] each edge has weight $d-2$,
\item[(ii)] each white vertex has weight~$d$,
\item[(iii)] each black vertex $v$ has degree and weight satisfying $\deg(v)+\weight(v)=d$.
\end{enumerate}
\end{Def}

\fig{width=.9\linewidth}{d-branching-edges}{The possible edges of $d$-branching mobiles. The white leaves are indicated.}

\begin{figure}
\begin{center}
\includegraphics[width=\linewidth]{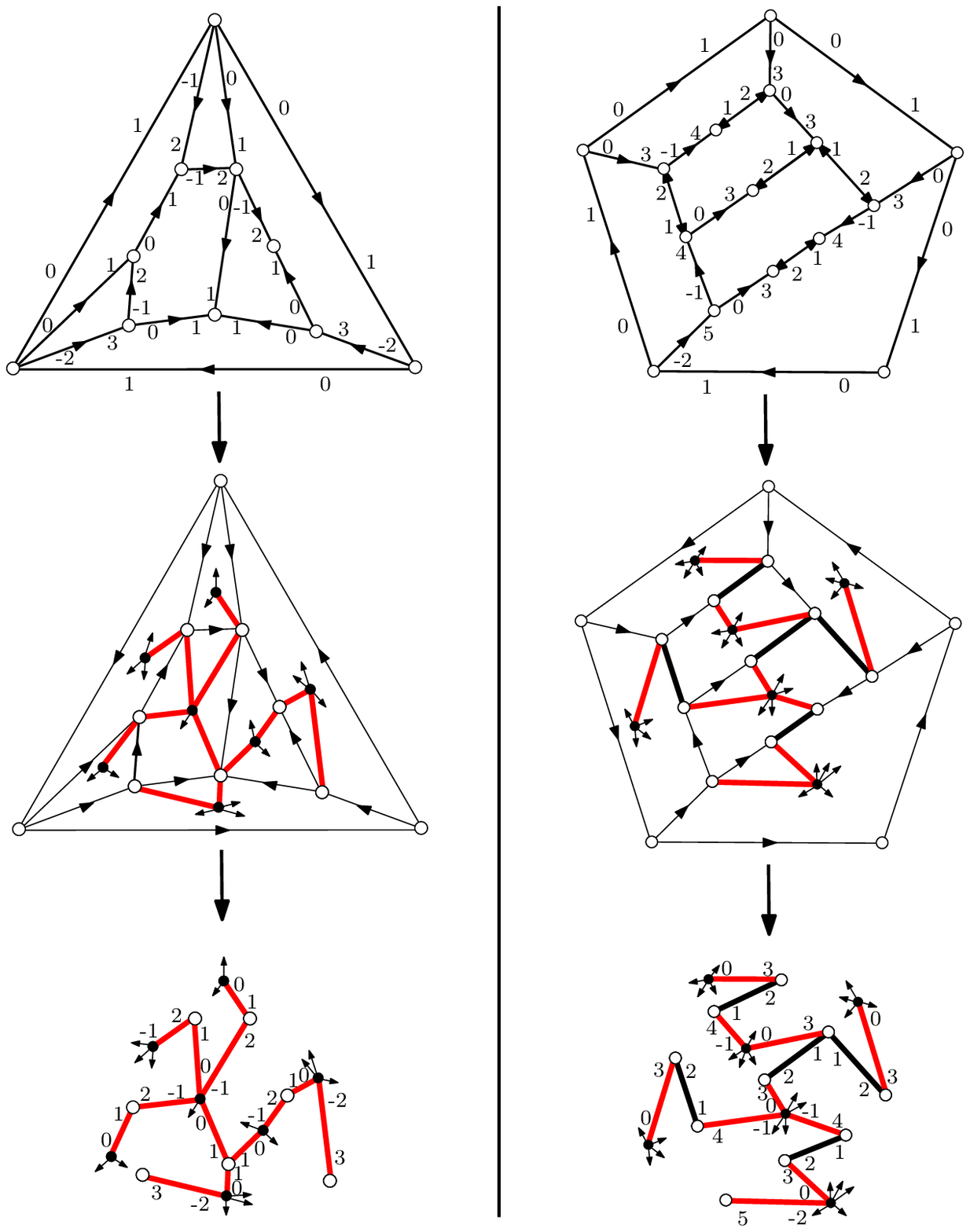}
\end{center}
\caption{
The bijection applied to maps in $\mC_d$ ($d=3$ on the left and $d=5$ on the right). 
Top: a plane map of girth~$d$ and outer degree~$d$ endowed with its suitable \ddm-orientation. 
Bottom: the associated $d$-branching mobiles.}
\label{fig:bij_gen}
\end{figure}

The possible edges of a $d$-branching mobile are represented for different values of~$d$ in Figure~\ref{fig:d-branching-edges}. The following claim can be proved by an argument similar to the one used in Claim~\ref{claim:excess_bip}.

\begin{claim}\label{claim:excess_gen}
Any $d$-branching mobile has excess $-d$.
\end{claim}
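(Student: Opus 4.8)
The plan is to mimic the proof of Claim~\ref{claim:excess_bip} verbatim, replacing the bipartite weight conditions by their $d$-branching analogues. First I would introduce the same bookkeeping quantities: let $\ee$ be the number of edges of the mobile $T$, let $\beta$ be the number of buds, let $v_b,v_w$ be the numbers of black and white vertices, and let $h_b,h_w$ be the numbers of non-bud half-edges incident to black and white vertices respectively. By definition the excess is $\delta=h_w-\beta$, so the goal is to show $h_w-\beta=-d$.

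Next I would translate each defining condition of a $d$-branching mobile (Definition~\ref{def:dbranching}) into a summed relation. Summing Condition~(iii) over all black vertices gives $h_b+S=d\,v_b$, where $S$ is the total weight of all half-edges incident to black vertices (here $\deg(v)$ counts all half-edges at $v$, including buds, so the count naturally produces $h_b+\beta$ if buds are weight-free; I will need to be careful about whether buds are included in $h_b$ — in this excerpt buds are not non-bud half-edges, so $\deg(v)$ summed over black vertices equals $h_b+\beta$, giving $(h_b+\beta)+S=d\,v_b$). Summing Conditions~(i) and~(ii): each of the $\ee$ edges has weight $d-2$, so the total edge-weight is $\ee(d-2)$; on the other hand this total equals the sum of all non-bud half-edge weights, namely the white-vertex total $d\,v_w$ plus the black-vertex total $S$. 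Hence $\ee(d-2)=d\,v_w+S$.

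Then I would eliminate $S$ between the two relations $(h_b+\beta)+S=d\,v_b$ and $\ee(d-2)=d\,v_w+S$, obtaining $\ee(d-2)+h_b+\beta=d(v_b+v_w)$. Finally I would substitute the two tree identities $v_b+v_w=\ee+1$ (a tree on $\ee$ edges has $\ee+1$ vertices) and $2\ee=h_b+h_w$ (each edge contributes two non-bud half-edges). Plugging these in and simplifying should collapse the $\ee$ terms and leave exactly $h_w-\beta=-d$, which is the claimed excess. The main obstacle is purely clerical rather than conceptual: I must get the bud/half-edge accounting in Condition~(iii) exactly right — specifically whether the factor multiplying $v_b$ in the black-vertex relation is $(h_b+\beta)$ or $h_b$, since the bipartite version had the extra factor of $1/2$ from $\deg(v)/2$ that is absent here, and the arithmetic of eliminating $S$ and substituting $v_b+v_w=\ee+1$, $2\ee=h_b+h_w$ must be tracked carefully so the coefficient of $d$ comes out to exactly $-1$ on $\delta$.
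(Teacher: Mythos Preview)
Your proposal is correct and is exactly the adaptation of the proof of Claim~\ref{claim:excess_bip} that the paper has in mind (the paper does not write out the proof of Claim~\ref{claim:excess_gen}, merely stating it ``can be proved by an argument similar to the one used in Claim~\ref{claim:excess_bip}''). Your bookkeeping is right: summing Condition~(iii) over black vertices gives $(h_b+\beta)+S=d\,v_b$, Conditions~(i)--(ii) give $\ee(d-2)=d\,v_w+S$, and eliminating $S$ together with $v_b+v_w=\ee+1$ and $2\ee=h_b+h_w$ yields $h_w-\beta=-d$.
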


We now come to the main result of this subsection, which is the correspondence between the set $\mC_{d}$ of plane maps of girth~$d$ and outer degree~$d$ and the set of $d$-branching mobiles. By Theorem~\ref{thm:gen}, the set $\mC_{d}$ can be identified with the subset of \ddm-oriented plane maps in $\wO$. Moreover, as in the bipartite case, it is easy to see from Theorem~\ref{thm:kmaster-bijections} that the master bijection $\Phi$ induces a bijection between the set of \ddm-oriented plane maps in $\wO$ and the set of $d$-branching mobiles. We conclude:

\begin{theo}\label{thm:girthd}
For any positive integer~$d$, plane maps of girth~$d$ and outer degree~$d$ are in bijection
with $d$-branching mobiles. Moreover, each inner face of degree $i$ in the map corresponds
to a black vertex of degree $i$ in the mobile.
\end{theo}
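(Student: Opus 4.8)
The plan is to mimic the proof of the bipartite analogue (Theorem~\ref{thm:girth2b}), replacing the bipartite ingredients by their general counterparts, so that the two inputs are the orientation characterization of $\mC_d$ (Theorem~\ref{thm:gen}) and the master bijection together with its parameter-correspondence (Theorem~\ref{thm:kmaster-bijections}). First I would use Theorem~\ref{thm:gen} to identify the class $\mC_d$ with a subset of $\wO$. Every map in $\mC_d$ has outer degree $d$, girth $d$, and hence no face of degree less than $d$, so Theorem~\ref{thm:gen} endows it with a \emph{unique} suitable \ddm-orientation; conversely a suitably \ddm-oriented plane map of outer degree~$d$ has girth~$d$ by the same theorem and thus lies in $\mC_d$. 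This yields a bijection between $\mC_d$ and the set of \ddm-oriented plane maps belonging to $\wO$.

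Next I would transport this subset of $\wO$ through the master bijection $\Phi$. The key observation is that the three defining conditions of a \ddm-orientation are \emph{local} constraints of exactly the type tracked by the parameter-correspondence of Figure~\ref{fig:parameter-correspondence}: condition~(i) constrains edge weights, condition~(ii) constrains inner-vertex weights, and condition~(iii) constrains face degree-plus-weight. Reading off the correspondence, inner edges of $M$ become edges of the mobile with the same weight, inner vertices become white vertices with the same weight (the ingoing half-edges of $M$ at an inner vertex are precisely the white half-edges of the corresponding white vertex, with weights preserved), and an inner face $f$ of $M$ becomes a black vertex $b_f$ with $\deg(b_f)=\deg(f)$ and with weight equal to $\weight(f)$. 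Matching these against Definition~\ref{def:dbranching} shows that conditions (i), (ii), (iii) of a \ddm-orientation translate into conditions (i), (ii), (iii) of a $d$-branching mobile, while the admissible weight range $\{0,-1,-2\}$ of outgoing half-edges matches the allowed weights on black half-edges. Hence $\Phi$ restricts to a bijection between \ddm-oriented maps in $\wO$ and $d$-branching mobiles of negative excess.

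Finally I would discard the redundant excess condition: by Claim~\ref{claim:excess_gen} every $d$-branching mobile has excess $-d<0$, so the ``negative excess'' hypothesis of Theorem~\ref{thm:kmaster-bijections} is automatically met and imposes no restriction. Composing the two bijections gives the asserted bijection between $\mC_d$ and the set of all $d$-branching mobiles, and the face-degree/black-vertex-degree correspondence is precisely the degree-preservation built into the parameter-correspondence (each corner of $f$ yields one half-edge, real or bud, of $b_f$). The main obstacle is the verification in the second paragraph: one must check carefully, against Figure~\ref{fig:parameter-correspondence} and the local transformations of Definition~\ref{def:master-bijections}, that the weight of a face equals the weight of the corresponding black vertex, that the indegree weights at inner vertices are transported to white-vertex weights, and that the weight range on outgoing half-edges is preserved. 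This is routine once the conventions on corners and half-edge weights are fixed, but it is the step where the precise bookkeeping of weights is essential.
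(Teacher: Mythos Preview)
Your proposal is correct and follows essentially the same approach as the paper: identify $\mC_d$ with the suitably \ddm-oriented maps in $\wO$ via Theorem~\ref{thm:gen}, then use the parameter-correspondence of Theorem~\ref{thm:kmaster-bijections} to match conditions (i)--(iii) of \ddm-orientations with conditions (i)--(iii) of $d$-branching mobiles, and finally invoke Claim~\ref{claim:excess_gen} to drop the redundant excess constraint. The only minor imprecision is that the image under $\Phi$ consists of mobiles of excess exactly $-d$ (not merely negative), since the outer degree is $d$; but this is harmless because Claim~\ref{claim:excess_gen} shows all $d$-branching mobiles have excess $-d$ anyway.
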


Figure~\ref{fig:bij_gen} illustrates the bijection on two examples ($d=3$, $d=5$). The bijection of Theorem~\ref{thm:girthd} is actually a generalization of the bijection given in~\cite{BeFu10} for $d$-angulations of girth $d\geq 3$ (for $d$-angulations there are no negative weights). The cases $d=1$ and $d=2$ of Theorem~\ref{thm:girthd} are examined in more details in Section~\ref{sec:special}, in particular the relation between our bijection in the case $d=1$ and the bijection described by Bouttier, Di Francesco and Guitter in~\cite{Boutt} (we also show a link with another bijection described by the same authors in~\cite{BDFG:mobiles}).\\

\begin{Remark} For $d=2b$ it is clear from Remark~\ref{rk:bip-specialization} that the bijection of Theorem~\ref{thm:girth2b} is equal to the specialization of the bijection of Theorem~\ref{thm:girthd} to bipartite maps, up to dividing the weights of the mobiles 
by two. 
\end{Remark}


\section{Bijections for maps with two root-faces}\label{sec:bij_girth_annular}
In this section we describe bijections for annular maps.
An annular map is of \emph{type} $(p,q)$ if the outer and inner root-faces have degrees $p$ and $q$ respectively. 
We denote by $\cA\pqd$ the class of annular maps of type $(p,q)$ with non-separating girth at least~$d$ and separating girth $p$ (in particular $\cA\pqd=\emptyset$ unless $q\geq p$). In the following we obtain a bijection between $\cA\pqd$ and a class of mobiles. 
Our strategy parallels the one of the previous section, and we start again with the bipartite case which is simpler. In Section~\ref{sec:count_annular} we will show that counting results for the classes $\cA\pqd$ can be used to enumerate also the annular maps with separating girth smaller than the outer degree.

\subsection{Bipartite case}\label{sec:bip_annular}
In this subsection we fix positive integers $b,r,s$ with $r\leq s$. We start with the definition of the 
\zb-biorientations that characterize the bipartite maps in $\cA\rsb$.
\begin{Def} \label{def:bbm-orientation_annular}
Let $M$ be a bipartite annular map of type $(2r,2s)$ having no face of degree less than $2b$. A \emph{\bbm-orientation} of $M$ is an admissible \zb-biorientation such that every outgoing half-edge has weight 0 or -1 and
\begin{enumerate}
\item[(i)] each inner edge has weight $b-1$,
\item[(ii)] each inner vertex has weight $b$,
\item[(iii)] each non-root face $f$ has degree and weight satisfying $\deg(f)/2+\weight(f)=b$,
\item[(iv)] the inner root-face has degree $2s$ and weight $r-s$.
\end{enumerate}
\end{Def}

Figure~\ref{fig:bij-annular} (top left) shows a \bbm-orientation for $b=2$.
Note that when $r=b$ (outer root-face of degree $2b$) every inner face (including the inner root-face)
satisfies $\deg(f)/2+\weight(f)=b$, in which
case we recover the definition of \bbm-orientations for plane bipartite maps of outer degree $2b$,
as given in Section~\ref{sec:bij_girth}.


\begin{thm}\label{thm:bip_annular}
Let $M$ be an annular bipartite map of type $(2r,2s)$. Then $M$ admits a \bbm-orientation if and only if $M$ is in $\cA\rsb$.
 In this case, there exists a unique suitable \bbm-orientation of $M$.
\end{thm}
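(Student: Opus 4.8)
\emph{Overall plan.} The plan is to mirror the proof of the plane bipartite case (Theorem~\ref{thm:bip}), the only genuinely new ingredient being the weight condition~(iv) imposed at the inner root-face. As there, I would split the argument into three parts: a weight-counting argument showing that a \bbm-orientation forces the girth constraints defining $\cA\rsb$ (necessity); a feasibility argument producing at least one \bbm-orientation when $M\in\cA\rsb$ (sufficiency); and an argument isolating a unique suitable orientation (Definition~\ref{def:wO}) among all \bbm-orientations.

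\emph{Necessity.} Suppose $M$ carries a \bbm-orientation. Since such an orientation is admissible by definition, the outer contour is a simple separating cycle of length $2r$, so the separating girth is at most $2r$; it then remains to bound the length of an arbitrary cycle from below. Let $C$ be a simple cycle of length $\ell$ and let $R$ be the region it bounds not containing the outer root-face. Denote by $V,E,F$ the inner vertices, inner edges and faces lying in the closed region $R$. Applying the Euler relation to $R$ gives a linear relation between $|V|,|E|,|F|$ and $\ell$. Summing condition~(ii) over $V$, condition~(i) over $E$, and conditions~(iii)/(iv) over $F$, the interior half-edge weights reorganize and cancel, and the boundary terms along $C$ are bounded using the sign constraints (outgoing weight in $\{0,-1\}$, ingoing weight at least $1$). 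This should yield $\ell\ge 2b$ when $C$ is non-separating and $\ell\ge 2r$ when $C$ is separating; the key point is that the weight surplus $r-b$ carried by the inner root-face in~(iv), relative to what~(iii) would prescribe for a face of degree $2s$, is exactly what raises the bound from $2b$ to $2r$ on the separating side. Hence $M\in\cA\rsb$.

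\emph{Sufficiency and uniqueness.} Conversely assume $M\in\cA\rsb$. Conditions~(i)--(iv) prescribe the total ingoing weight of every inner vertex and the total outgoing weight of every face, so constructing a \bbm-orientation amounts to choosing the ingoing/outgoing status together with the $\{0,-1\}$ weights of the half-edges so as to meet these prescriptions. I would phrase this as an integral flow feasibility problem and invoke a max-flow/min-cut (Hall-type) criterion: a \bbm-orientation exists if and only if no cut is deficient, and a deficient cut can be turned into a separating or non-separating cycle strictly shorter than the corresponding girth bound, which $M\in\cA\rsb$ forbids. I expect this conversion of the girth hypothesis into the precise cut inequalities---with the inner root-face governed by~(iv) rather than~(iii)---to be the main obstacle of the whole proof. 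Finally, the set of \bbm-orientations of a fixed map carries the standard distributive-lattice structure under reversal of counterclockwise circuits, so it has a unique minimal element (no counterclockwise circuit); a separate argument shows this minimal orientation is accessible from every outer vertex, and since it is automatically admissible it is the unique suitable one. As a consistency check, when $r=b$ the inner root-face satisfies~(iii) as well (cf.\ the remark following Definition~\ref{def:bbm-orientation_annular}), and the whole argument should specialize to Theorem~\ref{thm:bip}.
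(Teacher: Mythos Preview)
Your necessity argument is essentially the paper's (Lemma~\ref{lem:necd} and Corollary~\ref{cor:necess}): a weight count over the region bounded by a cycle, with the extra $r-b$ contributed by condition~(iv) boosting the separating bound. That part is fine.

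The sufficiency/uniqueness part, however, has a real gap. You treat \bbm-orientations as if they were ordinary $\alpha/\beta$-orientations, to which one can apply a Hall-type feasibility criterion and the standard distributive-lattice/minimality theory. They are not: conditions~(iii) and~(iv) are \emph{face}-weight constraints on outgoing half-edges with values in $\{0,-1\}$, so \bbm-orientations are \zb-biorientations with coupled vertex \emph{and} face constraints, and Lemma~\ref{lem:unique_minimal} (stated for \nb-valued $\alpha/\beta$-orientations) does not apply directly. In particular, ``reversal of counterclockwise circuits'' does not obviously preserve the face weights, so your claimed lattice structure on \bbm-orientations is unjustified. The paper's workaround is precisely to remove this obstruction: it passes to the auxiliary map $\MQ$ (superposition of $M$ with its inner-quadrangulation $Q$), on which the face constraints become vertex constraints at the face-vertices, so that the \emph{regular orientations} of $\MQ$ genuinely form an $\alpha/\beta$-class. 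The Hall criterion is then applied not to $M$ but to a $2b$-angular completion $B\in\cB_r^{(b)}$ (where there are no negative weights), and a contraction produces a regular orientation of $\MQ$; the unique minimal one is shown to be \emph{coherent} and is transported back to a \bbm-orientation of $M$ via a map~$\sigma$.

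A second gap is the accessibility step, which you dismiss as ``a separate argument''. In the paper this is the most delicate lemma (Lemma~\ref{lem:minmintau}): one removes the face-vertices of $\MQ$ one by one and argues, using the coherence property and minimality, that accessibility from the outer boundary survives each deletion. There is no evident shortcut here, and your proposal gives no indication of how you would establish it. Finally, note that the case $b=1$ is handled separately in the paper (via the already-proved case $d=2$ and a parity argument on the associated mobile); your uniform scheme does not single it out, but the Hall-type existence step (Proposition~\ref{prop:existsForArb}) is stated only for $b\ge 2$.
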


The proof of Theorem~\ref{thm:bip_annular} (which extends Theorem~\ref{thm:bip}, corresponding to the case $r=b$) is delayed to Section~\ref{sec:proof}. We now define the class of \zb-mobiles that we will show to be in bijection with bipartite maps in $\cA\rsb$.
\begin{Def}
A \emph{$b$-dibranching mobile of type $(2r,2s)$} is a \zb-mobile with a marked black vertex called \emph{special vertex} such that half-edges incident to black vertices have weight 0 or $-1$ and
\begin{enumerate}
\item[(i)] each edge has weight $b-1$,
\item[(ii)] each white vertex has weight $b$,
\item[(iii)] each non-special black vertex $v$ has degree and weight satisfying $\deg(v)/2+\weight(v)=b$; 
equivalently a non-special black vertex of degree $2i$ is adjacent to $i-b$ white leaves.
\item[(iv)] the special vertex $v_0$ has degree $2s$ and weight $r-s$; equivalently $v_0$ has degree $2s$ and is adjacent to $s-r$ white leaves.
\end{enumerate}
\end{Def}

A $2$-dibranching mobile of type $(6,8)$ is represented in Figure~\ref{fig:bij-annular} (bottom left). 
As a straightforward extension of Claim~\ref{claim:excess_bip} we obtain:
\begin{claim}\label{claim:excess_bip_annular}
Any $b$-dibranching mobile of type $(2r,2s)$ has excess $-2r$.
\end{claim}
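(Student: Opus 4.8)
The plan is to mimic the counting argument used for Claim~\ref{claim:excess_bip}, the only difference being the bookkeeping for the special vertex. Let $T$ be a $b$-dibranching mobile of type $(2r,2s)$, and keep the notation from the proof of Claim~\ref{claim:excess_bip}: let $\ee$ be its number of edges, $\beta$ its number of buds, $v_b$ and $v_w$ the numbers of black and white vertices, $h_b$ and $h_w$ the numbers of non-bud half-edges incident to black and to white vertices respectively, and $S$ the sum of the weights of the half-edges incident to black vertices. By definition the excess is $\delta=h_w-\beta$, so it suffices to show $h_w-\beta=-2r$.

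The first step is to sum the degree-weight relation over all black vertices, which is exactly where the special vertex enters. Every non-special black vertex satisfies $\deg(v)/2+\weight(v)=b$ by Condition~(iii), while the special vertex $v_0$ satisfies $\deg(v_0)=2s$ and $\weight(v_0)=r-s$ by Condition~(iv), hence $\deg(v_0)/2+\weight(v_0)=r$. Summing over the $v_b$ black vertices therefore gives $(h_b+\beta)/2+S=b(v_b-1)+r$, which differs from the corresponding identity in Claim~\ref{claim:excess_bip} only by the correction $r-b$ contributed by $v_0$.

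Conditions (i) and (ii) are unchanged from the plane case, so exactly as before they yield $\ee(b-1)=b\,v_w+S$. Next I would eliminate $S$ between the two relations to obtain $2\ee(b-1)+h_b+\beta=2b(v_b+v_w)-2b+2r$, and then substitute the tree identity $v_b+v_w=\ee+1$ together with the half-edge count $2\ee=h_b+h_w$. After simplification the $b$-dependent terms cancel and one is left with $h_w-\beta=-2r$, as desired.

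There is no genuine obstacle here: the computation is linear and identical in structure to Claim~\ref{claim:excess_bip}. The only point requiring care is isolating the contribution of the special vertex correctly---it contributes $r=\deg(v_0)/2+\weight(v_0)$ rather than $b$---and verifying that the cancellation of the $b$-terms is precisely what produces the clean value $-2r$ (which, reassuringly, reduces to Claim~\ref{claim:excess_bip} in the case $r=b$).
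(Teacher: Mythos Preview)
Your proof is correct and follows exactly the approach indicated by the paper, which merely states that the claim is ``a straightforward extension of Claim~\ref{claim:excess_bip}'' without giving details. You have filled in precisely the computation the authors had in mind: isolate the special vertex's contribution $\deg(v_0)/2+\weight(v_0)=r$ in place of $b$, and carry the resulting correction $r-b$ through the same linear algebra as before.
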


We now come to the main result of this subsection, which is the correspondence between the set $\cA\rsb$ of annular bipartite maps and $b$-dibranching mobiles of type $(2r,2s)$. First of all, by Theorem~\ref{thm:bip_annular} the set $\cA\rsb$ can be identified with the subset of \bbm-oriented annular maps of type $(2r,2s)$ in $\wO$. Thus, it remains to show that the master bijection induces a bijection between this subset and the set of $b$-dibranching mobiles of type $(2r,2s)$. In view of the parameter-correspondence of the master bijection $\Phi$ (Theorem~\ref{thm:kmaster-bijections}), it is clear that Conditions (i), (ii), (iii), (iv) of the \bbm-orientations correspond respectively to Conditions (i), (ii), (iii), (iv) of the $b$-dibranching mobiles. Thus, by Theorem~\ref{thm:kmaster-bijections}, the master bijection $\Phi$ induces a bijection between the set of \bbm-oriented annular maps of type $(2r,2s)$ in $\wO$ and the set of $b$-dibranching mobiles of type $(2r,2s)$ and excess $-2r$. Moreover, by Claim~\ref{claim:excess_bip_annular} the constraint on the excess is redundant. We conclude:

\begin{theo}\label{thm:girth2b_annular}
Bipartite annular maps in $\cA\rsb$ are in bijection with $b$-dibranching mobiles of type $(2r,2s)$. Moreover, each non-root face of degree $2i$ in the map corresponds to a non-special black vertex of degree $2i$ in the mobile.
\end{theo}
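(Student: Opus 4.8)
The plan is to realize the claimed bijection as a composition of two maps, each of which is essentially already available: the orientation-characterization of Theorem~\ref{thm:bip_annular}, followed by the master bijection $\Phi$ of Theorem~\ref{thm:kmaster-bijections}. The argument parallels the proof of Theorem~\ref{thm:girth2b} in the plane case, the only genuinely new ingredient being the bookkeeping of the distinguished inner root-face.

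First I would invoke Theorem~\ref{thm:bip_annular}: a bipartite annular map of type $(2r,2s)$ lies in $\cA\rsb$ if and only if it admits a \bbm-orientation, and then carries a \emph{unique} suitable one. By this uniqueness, the theorem identifies $\cA\rsb$ bijectively with the set of suitably \bbm-oriented annular maps of type $(2r,2s)$ sitting inside $\wO$ (an annular map being viewed, for this purpose, as a plane map whose root-face is the outer root-face, with the inner root-face retained as a distinguished inner face).

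Next I would apply $\Phi$ to this subset of $\wO$ and transport all the data through the parameter-correspondence of Figure~\ref{fig:parameter-correspondence}. Under $\Phi$ the faces of the plane map correspond to the black vertices of the mobile (the black vertex $b_{f_0}$ of the outer root-face being deleted), so the distinguished inner root-face---which is an inner face and hence never equal to $f_0$---maps to a well-defined black vertex, which I would declare to be the special vertex; conversely a marked special black vertex recovers the marked inner face, so the marking is faithfully preserved in both directions. The remaining correspondence is then a direct translation: edge weights are preserved, inner-vertex weights become white-vertex weights, and the degree and weight of each inner face become the degree and weight of the corresponding black vertex. Conditions (i), (ii), (iii) of Definition~\ref{def:bbm-orientation_annular} thus become Conditions (i), (ii), (iii) of a $b$-dibranching mobile of type $(2r,2s)$, while Condition (iv) (the inner root-face has degree $2s$ and weight $r-s$) becomes Condition (iv) on the special vertex. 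By Theorem~\ref{thm:kmaster-bijections} the image is exactly the set of such mobiles of excess $-2r$; but Claim~\ref{claim:excess_bip_annular} shows that every $b$-dibranching mobile of type $(2r,2s)$ automatically has excess $-2r$, so this constraint is redundant and the image is all such mobiles. The degree statement (a non-root face of degree $2i$ corresponds to a non-special black vertex of degree $2i$) is then immediate from the face-to-black-vertex degree correspondence of $\Phi$.

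I expect no serious obstacle: once Theorem~\ref{thm:bip_annular} and the master bijection are granted, the proof is a composition of bijections together with a routine reading of the parameter dictionary. The one point demanding care is Condition (iv), which pins the inner root-face to the non-balanced weight $r-s$ (in contrast to the balance $\deg(f)/2+\weight(f)=b$ enjoyed by the ordinary inner faces): one must check that this signed weight is transported correctly by the weight-correspondence of $\Phi$, and that marking the resulting black vertex as special is consistent with the unique-orientation statement of Theorem~\ref{thm:bip_annular}, rather than imposing any further constraint on the surrounding structure.
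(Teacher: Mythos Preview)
Your proposal is correct and follows essentially the same approach as the paper: identify $\cA\rsb$ with the suitably \bbm-oriented annular maps in $\wO$ via Theorem~\ref{thm:bip_annular}, apply the master bijection $\Phi$, match Conditions (i)--(iv) on both sides through the parameter-correspondence of Theorem~\ref{thm:kmaster-bijections}, and then invoke Claim~\ref{claim:excess_bip_annular} to drop the redundant excess constraint. Your added remarks about transporting the marked inner root-face to the special black vertex and about the signed weight in Condition~(iv) are correct and make explicit what the paper leaves implicit.
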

Theorem~\ref{thm:girth2b_annular} is illustrated in Figure~\ref{fig:bij-annular} (left). 
Observe that the case $b=r$ in Theorem~\ref{thm:girth2b_annular} corresponds to the bijection of Theorem~\ref{thm:girth2b} 
where an inner face is marked. 

\begin{figure}
\begin{center}
\includegraphics[width=\linewidth]{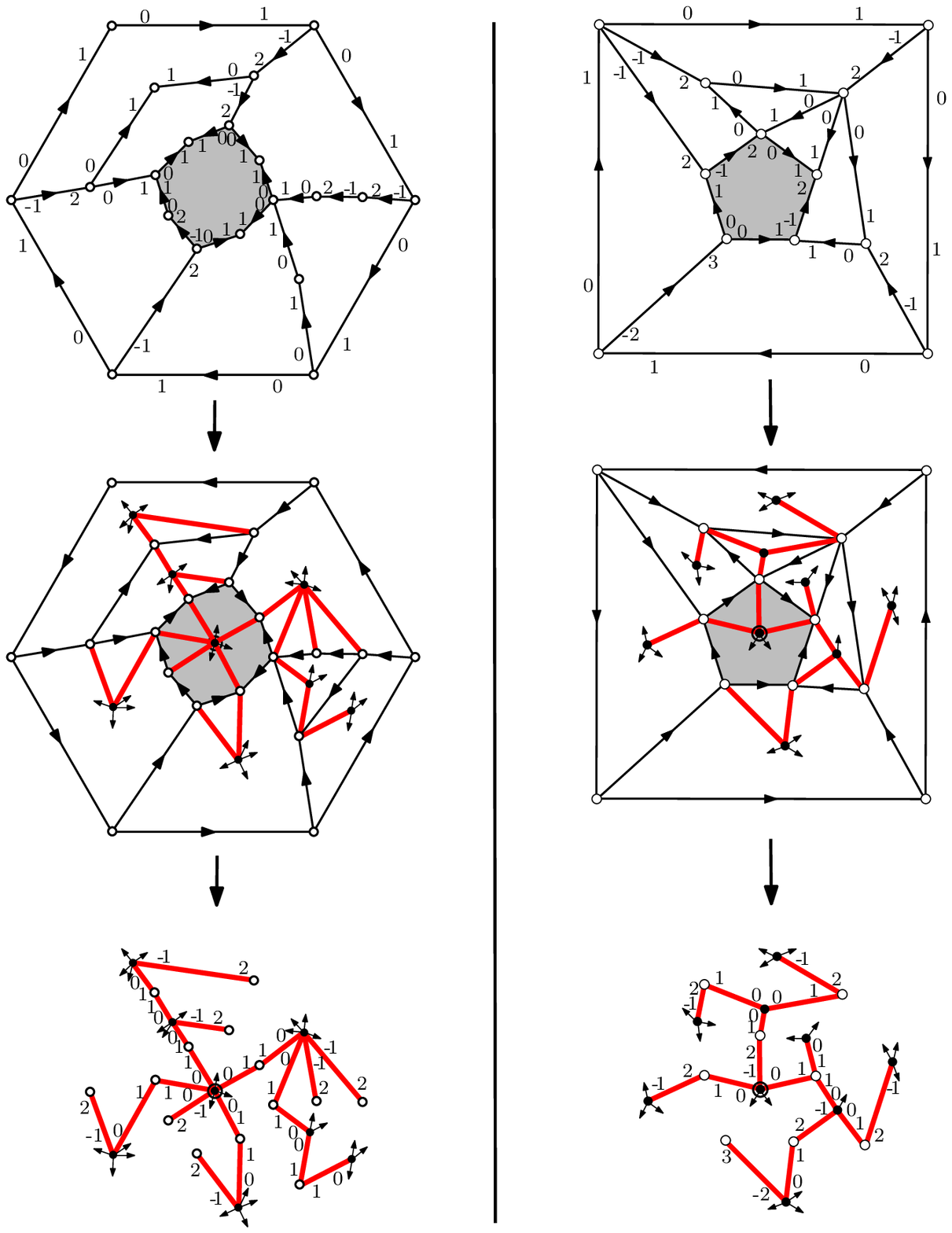}
\end{center}
\caption{Bijection for annular maps. Left: a bipartite annular map in $\cA\rsb$ with $\{b=2,r=3,s=4\}$ endowed with its suitable \bbm-orientation, and the associated $b$-dibranching mobile of type $(2r,2s)$;
to be compared with the left part of Figure~\ref{fig:bij_bip} ($b=2$, one root-face). Right: annular map in $\cA\pqd$ with $\{d=3,p=4,q=5\}$ endowed with its suitable \ddm-orientation, and the associated $d$-branching mobile of type $(p,q)$; to be compared with the left part of Figure~\ref{fig:bij_gen} ($d=3$, one root-face).}
\label{fig:bij-annular}
\end{figure}

\subsection{General case}\label{sec:gen_annular}
We now treat the case of general (not necessarily bipartite) maps.
In this subsection we fix positive integers $d,p,q$ with $p\leq q$.

\begin{Def} \label{def:ddmorient_annular}
Let $M$ be an annular map of type $(p,q)$ having no face of degree less than~$d$. A \emph{\ddm-orientation} of $M$ is an admissible \zb-biorientation such that every outgoing half-edge has weight 0, $-1$ or $-2$ and
\begin{enumerate}
\item[(i)] each inner edge has weight $d-2$,
\item[(ii)] each inner vertex has weight~$d$,
\item[(iii)] each non-root face $f$ has degree and weight satisfying $\deg(f)+\weight(f)=d$,
\item[(iv)] the inner root-face has degree $q$ and weight $p-q$.
\end{enumerate}
\end{Def}

Figure~\ref{fig:bij-annular} (top right) shows a \ddm-orientation for $d=3$.
Note that when $p=d$ every inner face satisfies $\deg(f)+\weight(f)=d$, in which
case we recover the definition of \ddm-orientations for plane maps of outer degree~$d$,
as given in Section~\ref{sec:bij_girth}.


\begin{thm}\label{thm:gen_annular}
Let $M$ be an annular map of type $(p,q)$ having no face of degree less than~$d$. Then, $M$ admits a \ddm-orientation if and only if $M$ is in $\cA\pqd$. In this case, there exists a unique suitable \ddm-orientation of $M$.
\end{thm}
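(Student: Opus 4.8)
The plan is to reduce Theorem~\ref{thm:gen_annular} to the already-established plane-map result Theorem~\ref{thm:gen}, exactly as Theorem~\ref{thm:bip_annular} extends Theorem~\ref{thm:bip} in the bipartite case. The central device is a \emph{slit-and-open} construction: given an annular map $M$ of type $(p,q)$ with non-separating girth at least~$d$ and separating girth~$p$, I would cut along a shortest separating cycle (which has length exactly~$p$) to unfold the annular map into a plane map $\wt{M}$ of outer degree~$p=d'$ in some modified girth class, thereby transferring the girth hypotheses on $M$ into hypotheses to which Theorem~\ref{thm:gen} applies. Conversely, the \ddm-orientation of an annular map should restrict and glue compatibly with the plane-map \ddm-orientations. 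I expect, however, that the authors proceed more directly, so I will describe the direct approach.

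First I would prove the \emph{existence} direction, i.e.\ that membership in $\cA\pqd$ forces the existence of a \ddm-orientation. The natural tool is a flow/tension argument in the spirit of Hall's theorem or the max-flow--min-cut theorem: conditions (i)--(iv) of Definition~\ref{def:ddmorient_annular} assign a prescribed weight $d$ to each inner vertex, weight $d-2$ to each inner edge, and weight $d-\deg(f)$ to each non-root face, plus the special value $p-q$ to the inner root-face. Summing these local constraints and using the Euler relation should show the total prescribed weight is consistent; then one shows that the girth constraints (non-separating cycles of length $\geq d$, separating cycles of length $\geq p$) are exactly the cut conditions guaranteeing a feasible integral \zb-biorientation exists. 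The non-separating girth controls ordinary (contractible) cycles as in the plane case of Theorem~\ref{thm:gen}, while the extra separating-girth condition~$p$ is precisely what condition (iv) encodes, controlling cycles that wind around the annulus. The converse (a \ddm-orientation forces the girth bounds) is the easier half: given any cycle $C$, one sums the weight conditions over the region it bounds (or over the annular region between $C$ and a root-face) and reads off a lower bound on $\deg$ of the enclosed faces, hence on the length of~$C$, distinguishing the separating and non-separating cases.

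For \emph{uniqueness} of the suitable \ddm-orientation, I would use the standard minimality argument for $\alpha$-orientations on maps: the set of \ddm-orientations of a fixed~$M$ carries a distributive-lattice structure under the operation of reversing counterclockwise circuits, so accessibility from the outer vertices together with minimality (no counterclockwise circuit) pins down a unique element. Concretely, one shows any two \ddm-orientations differ by a \zb-flow supported on circuits, and that reversing a counterclockwise circuit preserves all the weight conditions (i)--(iv) while strictly decreasing a suitable potential; the minimal orientation is then unique, and one separately checks it is accessible from every outer vertex, so that it lies in~$\wO$.

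The main obstacle, I expect, is handling condition~(iv) and the \emph{separating} cycles correctly in the flow argument: unlike the plane case, the annulus is not simply connected, so a naive region-summation does not close up, and one must carefully account for the homology class of each cycle. The weight deficit $p-q$ at the inner root-face is what compensates for winding, and verifying that the cut condition built from separating girth~$\geq p$ exactly matches this deficit—so that feasibility holds if and only if $M\in\cA\pqd$—is the delicate step. I would therefore organize the proof so that the separating and non-separating cases are treated on the same footing by cutting the annulus open along a radial path to reduce to a planar (disk) region, reducing condition~(iv) to a boundary condition amenable to the same argument used for Theorem~\ref{thm:gen}.
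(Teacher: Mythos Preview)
Your proposal has two genuine gaps.

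\textbf{First}, the reduction to Theorem~\ref{thm:gen} goes in the wrong direction. Theorem~\ref{thm:gen} is the \emph{special case} $p=d$ of Theorem~\ref{thm:gen_annular}, not a more general result; the paper actually proves the annular statement directly and recovers the plane statement as a corollary. A slit along a shortest separating cycle of length~$p$ does not yield a plane map of outer degree~$d$ and girth~$d$, so there is nothing for Theorem~\ref{thm:gen} to act on.

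\textbf{Second}, the ``direct'' flow argument you sketch does not apply as stated. A \ddm-orientation is not an $\alpha/\beta$-orientation: the outgoing half-edge weights lie in $\{0,-1,-2\}$ (so negative values occur), and condition~(iii) is a constraint on \emph{faces}, which is invisible to the vertex/edge cut conditions of the Hall/max-flow criterion. Likewise, the distributive-lattice uniqueness argument you invoke is established only for $\mathbb{N}$-biorientations with fixed vertex and edge weights; it does not carry over automatically to $\mathbb{Z}$-biorientations subject to face constraints, and circuit reversal need not preserve condition~(iii).

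The paper resolves exactly this difficulty by an auxiliary construction that converts the problem into a genuine $\alpha/\beta$-orientation problem. For $d\ge 2$ it first subdivides every edge of $M$ to obtain a bipartite map $M'\in\cA\Bpqd$ and shows (Lemma~\ref{lem:taubij}) that suitable \ddm-orientations of $M$ correspond bijectively to suitable $d/(d-1)$-orientations of $M'$. In the bipartite setting it then superimposes $M'$ with its inner-quadrangulation to form a map $\MQ$, on which the face constraint~(iii) becomes a \emph{vertex} constraint at the added face-vertices; the resulting ``regular orientations'' are honest $\mathbb{N}$-biorientations to which Lemmas~\ref{lem:exists-alpha} and~\ref{lem:unique_minimal} apply. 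Existence is obtained by first completing $M'$ to an annular $2b$-angulation (where no negative weights occur) and then contracting back; uniqueness comes from the minimal regular orientation of $\MQ$, which is shown to be coherent and to map under~$\sigma$ to the unique suitable \bbm-orientation. The case $d=1$ is handled separately via a parity argument on the associated mobile. Your region-summation for the necessity direction is correct and matches Lemma~\ref{lem:necd}.
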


\begin{Remark}\label{rk:bip-specialization-annular} If $d=2b$ and $M$ is a bipartite annular map in $\cA\pqd$, then the unique suitable \ddm-orientation of $M$ is obtained from its suitable \bbm-orientation by doubling the weight of every inner half-edge. 
\end{Remark}

The proof of Theorem~\ref{thm:gen_annular} (which extends
Theorem~\ref{thm:gen}) is delayed to Section~\ref{sec:proof}. 


\begin{Def}\label{def:dbranching_annular}
A \emph{$d$-branching mobile of type $(p,q)$} is a \zb-mobile with a marked black vertex called \emph{special vertex} such that half-edges incident to black vertices have weight 0, $-1$ or $-2$ and
\begin{enumerate}
\item[(i)] each edge has weight $d-2$,
\item[(ii)] each white vertex has weight~$d$,
\item[(iii)] each non-special black vertex $v$ has degree and weight satisfying \mbox{$\deg(v)+\weight(v)=d$,} 
\item[(iv)] the special vertex has degree $q$ and weight $p-q$.
\end{enumerate}
\end{Def}

The proof of the following claim is similar to the one used for Claim~\ref{claim:excess_bip}.
\begin{claim}\label{claim:excess_gen_annular}
Any $d$-branching mobile of type $(p,q)$ has excess $-p$.
\end{claim}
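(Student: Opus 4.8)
The plan is to follow verbatim the weight-counting argument behind Claim~\ref{claim:excess_bip} (in its annular refinement Claim~\ref{claim:excess_bip_annular}), adapting the constants to the non-bipartite conditions of Definition~\ref{def:dbranching_annular}. Let $T$ be a $d$-branching mobile of type $(p,q)$. Write $\ee$ for its number of edges, $\beta$ for its number of buds, $v_b,v_w$ for its numbers of black and white vertices, and $h_b,h_w$ for the numbers of non-bud half-edges incident respectively to black and to white vertices. Let $S$ denote the sum of the weights of all half-edges incident to black vertices. By definition the excess is $\delta=h_w-\beta$, so it suffices to prove $h_w-\beta=-p$.

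First I would sum the black-vertex conditions (iii) and (iv) over all black vertices. The quantity $\deg(v)+\weight(v)$ equals $d$ for each of the $v_b-1$ non-special black vertices, and equals $q+(p-q)=p$ for the special vertex. Since the degree of a black vertex counts its buds, $\sum_v\deg(v)=h_b+\beta$, while $\sum_v\weight(v)=S$; hence $h_b+\beta+S=d(v_b-1)+p$. Next, summing condition (i) over the $\ee$ edges shows that the total weight of all non-bud half-edges is $\ee(d-2)$; splitting this sum according to whether a half-edge is incident to a white or a black vertex, and using condition (ii) (each white vertex has weight $d$, so the white contribution is $d\,v_w$), I obtain $d\,v_w+S=\ee(d-2)$, i.e. $S=\ee(d-2)-d\,v_w$.

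Eliminating $S$ between these two relations yields $h_b+\beta=d(v_b+v_w)-d+p-\ee(d-2)$. Finally I would substitute the two structural identities valid for any tree-shaped mobile, namely $v_b+v_w=\ee+1$ and $2\ee=h_b+h_w$. The first turns the right-hand side into $p+2\ee$, so $h_b+\beta=p+2\ee$; the second then gives $h_b+\beta=p+h_b+h_w$, that is $h_w-\beta=-p$, as desired.

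There is in fact no genuine obstacle here: once the correct constants are inserted ($d$ for ordinary black vertices, and $q+(p-q)=p$ for the special vertex), the argument is a purely linear elimination identical in structure to that of Claim~\ref{claim:excess_bip}. The one point worth a moment's care is the bookkeeping of buds, which contribute to each black vertex's degree (hence to $\sum_v\deg(v)=h_b+\beta$) but carry no weight and are not counted among the $h_b$ non-bud half-edges; and the observation that allowing black half-edge weights to take the value $-2$ (rather than only $0,-1$ as in the bipartite case) is immaterial, since these weights enter only through $S$, which is eliminated.
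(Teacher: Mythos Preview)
Your proof is correct and is precisely the adaptation of the argument of Claim~\ref{claim:excess_bip} that the paper intends: the paper does not write out a proof of Claim~\ref{claim:excess_gen_annular} at all, merely stating that it ``is similar to the one used for Claim~\ref{claim:excess_bip}.'' Your linear elimination, with $d$ in place of $2b$ and the special vertex contributing $q+(p-q)=p$, is exactly that.
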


We now come to the main result of this subsection, which is the correspondence between the set  $\cA\pqd$ of annular maps and $d$-branching mobiles of type $(p,q)$. First of all, by Theorem~\ref{thm:gen_annular}, the set $\cA\pqd$ can be identified with the set of \ddm-oriented annular maps of type $(p,q)$ in $\wO$. Moreover, as in the bipartite case, it is easy to see from Theorem~\ref{thm:kmaster-bijections} that the master bijection $\Phi$ induces a bijection between this subset of $\wO$ and the set of $d$-branching mobiles of type $(p,q)$. We conclude:

\begin{theo}\label{thm:girthd_annular}
Annular maps in $\cA\pqd$
are in bijection with $d$-branching mobiles of type $(p,q)$. Moreover, each non-root face of degree $i$ in the map corresponds
to a non-special black vertex of degree $i$ in the mobile.
\end{theo}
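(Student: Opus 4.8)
The plan is to deduce Theorem~\ref{thm:girthd_annular} from Theorem~\ref{thm:gen_annular} and the master bijection (Theorem~\ref{thm:kmaster-bijections}) in exactly the same way the plane-map statement (Theorem~\ref{thm:girthd}) was deduced from Theorem~\ref{thm:gen}, so almost all of the work has already been done upstream and my job is to assemble the pieces and verify that the parameter-correspondence matches up cleanly. First I would invoke Theorem~\ref{thm:gen_annular}: every map in $\cA\pqd$ admits a \emph{unique} suitable \ddm-orientation, and conversely a type-$(p,q)$ map carrying a suitable \ddm-orientation lies in $\cA\pqd$. This identifies $\cA\pqd$ bijectively with the set of suitably \ddm-oriented annular maps of type $(p,q)$, which is a subset of $\wO$ (the distinguished inner root-face of the annular map becomes the marked face whose associated black vertex will be the special vertex of the mobile).

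Next I would apply the master bijection $\Phi$ to this subset of $\wO$. By Theorem~\ref{thm:kmaster-bijections}, $\Phi$ is a bijection onto \zb-mobiles of negative excess with the parameter-correspondence of Figure~\ref{fig:parameter-correspondence}. The point is to check, condition by condition, that $\Phi$ carries the defining constraints of a suitable \ddm-orientation (Definition~\ref{def:ddmorient_annular}) onto the defining constraints of a $d$-branching mobile of type $(p,q)$ (Definition~\ref{def:dbranching_annular}). Under $\Phi$ each inner vertex of $M$ becomes a white vertex and each inner face becomes a black vertex, and weights are transported so that: the inner-edge weight condition (i) corresponds to the mobile edge-weight condition (i); the inner-vertex weight~$d$ condition (ii) corresponds to the white-vertex weight~$d$ condition (ii); the non-root inner-face condition $\deg(f)+\weight(f)=d$, condition (iii), corresponds to the non-special black-vertex condition (iii); and crucially the inner root-face condition (iv), namely degree $q$ and weight $p-q$, is transported to the marked (special) black vertex, giving it degree $q$ and weight $p-q$, which is precisely condition (iv) for the mobile. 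Thus $\Phi$ restricts to a bijection between suitably \ddm-oriented type-$(p,q)$ annular maps and $d$-branching mobiles of type $(p,q)$ \emph{of excess} $-p$.

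The remaining technical point is the excess constraint: $\Phi$ outputs only mobiles of negative excess, and the image of our subset consists of those of excess $-p$. Here I would invoke Claim~\ref{claim:excess_gen_annular}, which asserts that \emph{every} $d$-branching mobile of type $(p,q)$ automatically has excess $-p$. Consequently the excess condition is redundant, and the bijection is onto the full set of $d$-branching mobiles of type $(p,q)$, with no side constraint. This is the same redundancy argument used in the bipartite annular case just above (via Claim~\ref{claim:excess_bip_annular}) and in the plane case.

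The degree statement then follows directly from the parameter-correspondence: since each inner (non-root) face of degree $i$ in $M$ becomes a black vertex of $\Phi(M)$ whose degree equals the face degree, and since the marked inner root-face becomes the special vertex, every \emph{non-root} face of degree $i$ corresponds to a \emph{non-special} black vertex of degree $i$, as claimed. I do not expect any genuine obstacle here, since all the substantive content has been relegated to Theorem~\ref{thm:gen_annular} (whose proof is deferred to Section~\ref{sec:proof}) and to the previously established Theorem~\ref{thm:kmaster-bijections} and Claim~\ref{claim:excess_gen_annular}; the only thing requiring care is the bookkeeping that matches condition~(iv) of the orientation to condition~(iv) of the mobile and correctly locates the special vertex at the marked inner face.
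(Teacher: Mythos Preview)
Your proposal is correct and follows essentially the same approach as the paper: identify $\cA\pqd$ with suitably \ddm-oriented annular maps via Theorem~\ref{thm:gen_annular}, apply the master bijection $\Phi$, match conditions (i)--(iv) of Definition~\ref{def:ddmorient_annular} with conditions (i)--(iv) of Definition~\ref{def:dbranching_annular}, and use Claim~\ref{claim:excess_gen_annular} to discard the excess constraint. The paper's argument is just a slightly terser version of what you wrote, deferring the condition-by-condition check to ``as in the bipartite case.''
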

Theorem~\ref{thm:girthd_annular} is illustrated in Figure~\ref{fig:bij-annular} (right).
The case $d=p$ in Theorem~\ref{thm:girthd_annular} corresponds to the bijection of Theorem~\ref{thm:girthd} 
where an inner face is marked. 

\begin{Remark} For $d=2b$ it is clear from Remark~\ref{rk:bip-specialization-annular} that the bijection of Theorem~\ref{thm:girth2b_annular} is equal to the specialization of the bijection of Theorem~\ref{thm:girthd_annular}, up to dividing the weights of the mobiles by two. 
\end{Remark}


\section{Counting results}\label{sec:count}
In this section we derive the enumerative consequences of the bijections described in the previous sections.

\subsection{Counting maps with one root-face}\label{sec:count_one_root_face}
In this subsection we give, for each positive integer~$d$, a system of equations specifying the generating function $F_{d}$ of
rooted maps of girth~$d$ and outer degree~$d$ counted according to the number of inner face of each degree.

We first set some notation. For any integers $p,q$ we denote by $[p\,..\,q]$ the set of integers $\{k\in\mathbb{Z},~p\leq k\leq q\}$. If $G(x)$ is a (Laurent) formal power series in $x$, we denote by $[x^k]G(x)$, the coefficient of $x^k$ in $G(x)$. For each non-negative integer $j$ we define the polynomial $h_j$ in the variables $w_1,w_2,\ldots$ by:
\begin{equation}\label{eq:def_hj}
h_j(w_1,w_2,\ldots):=[t^j]\frac{1}{1-\sum_{i>0}t^i w_i}=\sum_{r\geq 0}\sum_{\substack{i_1,\ldots,i_r>0\\ i_1+\cdots+i_r=j}}w_{i_1}\cdots w_{i_r}.
\end{equation}
 In other words, $h_j$ is the (polynomial) generating function of integer compositions of $j$ where the variable $w_i$ marks the number of parts of size~$i$. Note that $h_0=1$.

Let~$d$ be a positive integer. By Theorem~\ref{thm:girth2b} and Claim~\ref{claim:bij}, counting rooted plane maps of girth~$d$ and outer degree~$d$ reduces to counting $d$-branching mobiles rooted at an exposed bud. To carry out the latter task we simply write the generating function equation corresponding to the recursive decomposition of trees. We call \emph{planted $d$-branching mobile} a mobile with a dangling half-edge that can be obtained as one of the two connected components after cutting a $d$-branching mobile $M$ at the middle of an edge.
The weight of the dangling half-edge $h$ is called the \emph{root-weight}, and the vertex incident to $h$ is called the \emph{root-vertex}. Recall that the half-edges of a $d$-branching mobiles have weight in $[-2\,..\,d]$. For $j$ in $[-2\,..\,d]$, we denote by $\cW_j$ the family of planted $d$-branching mobiles of root-weight $d-2-j$. We denote by $W_j\equiv W_j(x_{d},x_{d+1},x_{d+2}\ldots)$ the generating function of $\cW_j$, where for $k\geq d$ the variable $x_k$ marks the black vertices of degree $k$. We now consider the recursive decomposition of planted mobiles and translate it into a system of equations characterizing the series $W_{-2},\ldots,W_d$.

Let $j$ be in $[-2\,..\,d-3]$, and let $T$ be a planted mobile in $\cW_{j}$. Since $d-2-j>0$, the root-vertex $v$ of $T$ is white, hence is incident to half-edges having positive weights. Let $e_1,\ldots,e_r$ be the edges incident to $v$. For all $i=1\ldots r$, let $T_i$ be the planted mobile obtained by cutting the edge $e_i$ in the middle ($T_i$ is the subtree not containing $v$), and let $\al(i)>0$ be the weight of the half-edge of $e_i$ incident to $v$ (so that $T_i$ is in $\cW_{\al(i)}$). Since the white vertex $v$ has weight~$d$, one gets the constraint $\sum_i \al(i)=j+2$. Conversely, any sequence of planted mobiles $T_1, \ldots,T_r$ in $\cW_{\al(1)},\ldots,\cW_{\al(r)}$ such that $\al(i)>0$ and $\sum_i \al(i)=j+2$ gives a planted mobile in $\cW_{j}$. Thus for all $j$ in $[-2\,..\,d-3]$,
$$
W_j=\sum_{r\geq 0}\sum_{\substack{i_1,\ldots,i_r>0\\ i_1+\cdots+i_r=j+2}}W_{i_1}\cdots W_{i_r}=h_{j+2}(W_1,\ldots,W_{d-1}).
$$
Note that the special case $W_{-2}=1$ is consistent with our convention $h_0=1$.
Observe also that $W_{-1}=h_1(W_1)=W_1$ whenever $d>1$.

Now let $j$ be in $[d-2\,..\,d]$, let $T$ be a planted mobile in $\cW_{j}$. Since $d-2-j \leq 0$,  the root-vertex $v$ of $T$ is black. If $v$ has degree $i$, then there is a sequence of $i-1$ buds and non-dangling half-edges incident to $v$. Each non-dangling half-edge $h$ has weight $\al\in\{0,-1,-2\}$, and cutting the edge containing $h$ gives a planted mobile in $\cW_\al$. Lastly, Condition (iii) of $d$-branching mobiles implies that the sum of weights of non-dangling half-edges is $d-\mathrm{deg}(v)-(d-2-j)=j+2-i$. Conversely, any sequence of buds and non-dangling half-edges satisfying these conditions gives a planted mobile in $\cW_{j}$.
Thus for all $j$ in $[d-2\,..\,d]$,
$$
W_j=[u^{j+2}]\sum_{i\geq d}x_iu^{i}(1+W_0+u^{-1}W_{-1}+u^{-2})^{i-1},
$$
where the summands $1$, $W_0$, $u^{-1}W_{-1}$ and $u^{-2}=u^{-2}W_{-2}$ in the parenthesis correspond respectively to the buds and non-dangling half-edges of weight 0, $-1$, $-2$ incident to $v$. We summarize:
\begin{theo}\label{thm:count_gen}
Let~$d$ be a positive integer, and let $F_{d}\equiv F_{d}(x_{d},x_{d+1},x_{d+2},\ldots)$ be the \gf of rooted
maps of girth~$d$ with outer degree~$d$, where each variable $x_{i}$ counts the inner faces of degree $i$. Then,
\begin{equation}\label{eq:systF}
F_{d}=W_{d-2}-\sum_{j=-2}^{d-3}W_j W_{d-2-j},
\end{equation}
where $W_{-2}=1,W_{-1},W_{0},\ldots,W_{d}$ are the unique formal power series satisfying:
\begin{equation}\label{eq:syst2}
\left\{
\begin{array}{ll}\ds
W_j=h_{j+2}(W_1,\ldots,W_{d-1}) & \textrm{for all } j \textrm{ in }[-2\,..\,d-3],\\[.1cm]
\ds W_j=[u^{j+2}]\sum_{i\geq d}x_iu^i(1+W_0+u^{-1}W_{-1}+u^{-2})^{i-1} & \textrm{for all } j \textrm{ in }[d-2\,..\,d],\\
\end{array}
\right.
\end{equation}
where the polynomials $h_j$ are defined by~\eqref{eq:def_hj}. In particular, for any finite set $\Delta\subset \{d,d+1,d+2,\ldots\}$, the specialization of $F_d$ obtained by setting $x_i=0$ for all $i$ not in $\Delta$ is algebraic (over the field of rational function in $x_i,i\in\Delta$).
\end{theo}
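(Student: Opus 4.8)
The plan is to establish Theorem~\ref{thm:count_gen} by combining the bijection of Theorem~\ref{thm:girthd} (plane maps of girth~$d$ and outer degree~$d$ correspond to $d$-branching mobiles), the rooting correspondence of Claim~\ref{claim:bij} (corner-rooted maps correspond to mobiles rooted at an exposed bud), and the recursive decomposition of planted mobiles that yields the system~\eqref{eq:syst2}. The bijective content is already in place; what remains is to turn the tree decomposition into the claimed \gf identities, and then to justify the final algebraicity statement. I would organize the argument in three stages: first derive the system for the $W_j$, then derive the formula~\eqref{eq:systF} for $F_d$ from the rooting at an exposed bud, and finally address algebraicity.

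For the first stage I would argue that the system~\eqref{eq:syst2} admits a unique solution in formal power series and genuinely defines the $W_j$. The key point is that the recursive decomposition decreases a suitable size parameter (the number of black vertices, equivalently the total number of incident inner faces), so the equations can be solved degree by degree: each $W_j$ as a series in the $x_i$ has a constant term determined by the smallest mobiles, and the coefficient of any monomial on the left-hand side depends only on strictly smaller monomials on the right. Concretely, a planted mobile in $\cW_j$ with white root ($j\in[-2\,..\,d-3]$) splits into an unordered-but-positionally-recorded sequence of smaller planted mobiles, giving $W_j=h_{j+2}(W_1,\ldots,W_{d-1})$; a planted mobile with black root ($j\in[d-2\,..\,d]$) splits into a cyclic-free sequence of buds and half-edges of weights $0,-1,-2$, giving the coefficient-extraction formula. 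Uniqueness then follows from the standard contraction/fixed-point argument for such size-decreasing systems, which I would state rather than belabor.

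For the second stage I would recover $F_d$ from the $W_j$. By Theorem~\ref{thm:girthd} and Claim~\ref{claim:bij}, $F_d$ counts $d$-branching mobiles rooted at an exposed bud; cutting the mobile at the edge carrying this bud (equivalently, re-rooting at the root-face) exhibits such a rooted mobile as a pair of planted mobiles whose root-weights sum correctly. Because the exposed bud corresponds to an outer edge of weight~$1$ on its ingoing half-edge, the two planted pieces lie in $\cW_j$ and $\cW_{d-2-j}$ for the appropriate range of $j$; summing over the decomposition and subtracting the overcounted or forbidden configurations produces the inclusion--exclusion expression $W_{d-2}-\sum_{j=-2}^{d-3}W_jW_{d-2-j}$. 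I expect this to be the main obstacle: one must pin down exactly which half-edge plays the role of the root, verify that the excess constraint (Claim~\ref{claim:excess_gen}) is automatically satisfied so no extra bookkeeping is needed, and check that the subtracted terms correctly remove the contributions in which the root-vertex is white, all while confirming there is no double counting from the symmetry factors mentioned in Claim~\ref{claim:bij}.

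For the final stage, the algebraicity statement, I would fix a finite $\Delta\subset\{d,d+1,\ldots\}$ and set $x_i=0$ for $i\notin\Delta$. Then the second family of equations in~\eqref{eq:syst2} becomes a finite sum over $i\in\Delta$, so each $W_j$ is defined by a polynomial identity in the finitely many unknowns $W_{-1},W_0,\ldots,W_d$ with coefficients that are polynomials in the $x_i$, $i\in\Delta$. This is a finite system of polynomial equations over the field of rational functions in $\{x_i : i\in\Delta\}$, whence each $W_j$, and therefore $F_d$ via~\eqref{eq:systF}, is algebraic over that field. I would invoke the elimination-theoretic fact that a component of the solution of a finite polynomial system is algebraic, so no further computation is required beyond noting the finiteness of $\Delta$ and of the index set $[-2\,..\,d]$.
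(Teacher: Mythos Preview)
Your first and third stages are correct and match the paper: the system~\eqref{eq:syst2} is derived exactly as you describe (recursive decomposition of planted mobiles according to the color of the root-vertex), uniqueness follows because each $W_j$ with $j\geq -1$ has zero constant term so coefficients are determined inductively, and the algebraicity is immediate once $\Delta$ is finite.

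The gap is in your second stage. A bud is a dangling half-edge, not part of an edge, so there is no ``edge carrying this bud'' to cut; a mobile with a marked exposed bud does not split into two planted pieces, and your inclusion--exclusion remains unspecified. The paper's derivation of~\eqref{eq:systF} is different and crucially uses the \emph{second} assertion of Claim~\ref{claim:bij}. By the first assertion, $F_d$ counts $d$-branching mobiles with a marked \emph{exposed} bud; one rewrites this as (mobiles with any marked bud) minus (mobiles with a marked non-exposed bud). The first quantity is $W_{d-2}$: replacing the marked bud by a dangling half-edge of weight~$0$ identifies such a mobile with a planted mobile of root-weight~$0$, i.e., an element of $\cW_{d-2}$. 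For the second quantity, the second assertion of Claim~\ref{claim:bij} gives a bijection between mobiles with a marked non-exposed bud and mobiles with a marked half-edge incident to a white vertex; cutting the edge containing that marked half-edge produces an ordered pair in $\cW_j\times\cW_{d-2-j}$ with $j\in[-2\,..\,d-3]$ (this range encodes exactly that the marked half-edge has positive weight), yielding $\sum_{j=-2}^{d-3}W_jW_{d-2-j}$. So the subtraction in~\eqref{eq:systF} is not an overcount correction at the exposed bud but the transfer of the non-exposed-bud count to a white-half-edge count via Claim~\ref{claim:bij}; this is the step your plan is missing.
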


For $d=1$, Theorem~\ref{thm:count_gen} gives exactly the system of equations obtained by Bouttier, Di Francesco and Guitter in~\cite{Boutt}. Observe that for any integer $d\geq 2$ the series $W_{-1}$ and $W_1$ are equal, so the number of unknown series is $d+1$ in these cases. Moreover for $d\geq 1$ the series $W_d$ is not needed to define the other series $W_0,W_1,\ldots,W_{d-1}$. Lastly, under the specialization $\{x_d=x,\ x_i=0\ \forall i>d\}$ one gets $W_{d-1}=W_d=0$ and $W_{d-2}=x(1+W_0)^{d-1}$; in this case we recover the system of equations given in~\cite{BeFu10} for the generating function of rooted $d$-angulations of girth~$d$.

\begin{proof}
The fact that the solution of the system~\eqref{eq:syst2} is unique is clear. Indeed, it is easy to see that the series $W_{-1},W_{0},\ldots,W_{d}$ have no constant terms, and from this it follows that the coefficients of these series are uniquely determined by induction on the total degree.\\
We now prove~\eqref{eq:systF}. By Theorem~\ref{thm:girthd} and Claim~\ref{claim:bij} (first assertion) the series $F_{d}$ is equal to the generating function of $d$-branching mobiles with a marked exposed bud (where $x_k$ marks the black vertices of degree $k$). Moreover by the second assertion of Claim~\ref{claim:bij}, $F_d$ is equal to the difference between the generating function $B_d$ of $d$-branching mobiles with a marked bud, and the generating function $H_d$ of $d$-branching mobiles with a marked half-edge incident to a white vertex. Lastly, $B_d=W_{d-2}$ because $d$-branching mobiles with a marked bud identify with planted mobiles in $\cW_{d-2}$, and $H_d=\sum_{j=-2}^{d-3}W_j W_{d-2-j}$ because $d$-branching mobiles with a marked half-edge incident to a white vertex are in bijection (by cutting the edge) with ordered pairs $(T,T')$ of planted $d$-branching mobiles in $\cW_j\times \cW_{d-2-j}$ for some $j$ in $[-2\,..\,d-3]$.
\end{proof}

We now explore the simplifications occurring in the bipartite case.

\begin{theo}\label{thm:count_bip}
Let $b\geq 1$, and let $E_{b}\equiv F_{2b}(x_{2b},0,x_{2b+2},0,x_{2b+4}\ldots)$ be the \gf of rooted bipartite maps of girth $2b$ with outer degree $2b$, where each variable $x_{2i}$ marks the number of inner faces of degree $2i$. Then,
\begin{equation}\label{eq:systE}
E_{b}=V_{b-1}-\sum_{j=-1}^{b-2}V_j V_{b-j-1},
\end{equation}
where $V_{-1}=1,V_{0},\ldots,V_{b}$ are the unique formal power series satisfying:
\begin{equation}\label{eq:syst1_bis}
\left\{
\begin{array}{lll}\ds
V_j=h_{j+1}(V_1,\ldots,V_{b-1}) & \textrm{for all } j \textrm{ in }[-1\,..\,b-2],\\[.1cm]
\ds V_{j}=\sum_{i\geq b}x_{2i}\binom{2i-1}{i-j-1}(1+V_0)^{i+j} & \textrm{for all } j \textrm{ in } \{b-1,b\}.
\end{array}
\right.
\end{equation}
\end{theo}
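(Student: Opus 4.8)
The plan is to mirror the proof of Theorem~\ref{thm:count_gen}, replacing $d$-branching mobiles by $b$-dibranching mobiles and invoking Theorem~\ref{thm:girth2b} together with Claim~\ref{claim:bij} in place of Theorem~\ref{thm:girthd}. By Theorem~\ref{thm:girth2b}, rooted bipartite maps of girth $2b$ and outer degree $2b$ are in bijection with $b$-dibranching mobiles carrying a marked exposed bud, an inner face of degree $2i$ corresponding to a black vertex of degree $2i$; hence $x_{2i}$ marks black vertices of degree $2i$, and computing $E_b$ reduces to enumerating these rooted mobiles.

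First I would introduce, for $j$ in $[-1\,..\,b]$, the family $\cV_j$ of planted $b$-dibranching mobiles of root-weight $b-1-j$, with generating function $V_j\equiv V_j(x_{2b},x_{2b+2},\ldots)$. The admissible half-edge weights fix the range of $j$: since each edge has weight $b-1$ and a black half-edge has weight $0$ or $-1$, a half-edge incident to a white vertex has weight in $[1\,..\,b]$ (the values $b-1,b$ occurring on black--white edges and the smaller values on white--white edges), while a half-edge incident to a black vertex has weight $0$ or $-1$. Thus the root-weight $b-1-j$ lies in $[1\,..\,b]$ exactly when $j\in[-1\,..\,b-2]$ (white root-vertex) and in $\{0,-1\}$ exactly when $j\in\{b-1,b\}$ (black root-vertex), which produces the $b+2$ series $V_{-1},V_0,\ldots,V_b$.

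Next I would carry out the recursive decomposition, splitting on the colour of the root-vertex exactly as in Theorem~\ref{thm:count_gen}. For $j\in[-1\,..\,b-2]$ the root is white of total weight $b$; deleting the root half-edge of weight $b-1-j$ leaves incident half-edges of positive weights summing to $j+1$, and cutting the corresponding edges yields an ordered collection of planted mobiles in $\cV_{\al(1)},\ldots,\cV_{\al(r)}$ with $\sum_i\al(i)=j+1$, so $V_j=h_{j+1}(V_1,\ldots,V_{b-1})$ (in particular $V_{-1}=h_0=1$). For $j\in\{b-1,b\}$ the root is black, of degree $2i$ say; its total weight is $b-i$, so the $2i-1$ non-root half-edges carry weights summing to $(b-i)-(b-1-j)=j+1-i$, each being a bud or a half-edge of weight $0$ or $-1$. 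Using $V_{-1}=1$, a weight-$(-1)$ half-edge simply contributes the factor $1$, the number of such half-edges must equal $i-j-1$, and the remaining $i+j$ slots contribute $(1+V_0)$ each; choosing their positions gives $V_j=\sum_{i\geq b}x_{2i}\binom{2i-1}{i-j-1}(1+V_0)^{i+j}$. This is the step where the bipartite structure pays off: the coefficient extraction used in Theorem~\ref{thm:count_gen} collapses to a single binomial coefficient precisely because $V_{-1}=1$.

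Finally I would establish~\eqref{eq:systE}. By the second assertion of Claim~\ref{claim:bij}, $E_b$ is the difference between the series $B_b$ of $b$-dibranching mobiles with a marked bud and the series $H_b$ of those with a marked half-edge incident to a white vertex. A marked bud is a weight-$0$ black half-edge, i.e.\ a planted mobile of root-weight $0$, so $B_b=V_{b-1}$; cutting at a marked white half-edge of weight $\al\in[1\,..\,b]$ splits the mobile into an ordered pair in $\cV_{b-1-\al}\times\cV_{\al}$, whence $H_b=\sum_{j=-1}^{b-2}V_jV_{b-j-1}$. Uniqueness would follow as before: the $V_j$ have no constant term, so~\eqref{eq:syst1_bis} determines their coefficients by induction on total degree. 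I expect the only genuine care to lie in pinning down the index range and tracking weights across each cut; once the white half-edge weights are seen to fill the whole interval $[1\,..\,b]$ (thanks to white--white edges), everything parallels the non-bipartite computation. An alternative would be to deduce the statement directly from Theorem~\ref{thm:count_gen} by setting the odd-degree variables to zero and halving all weights, but this first requires showing that the odd-indexed series $W_j$ vanish and then reindexing, which seems no shorter than the argument above.
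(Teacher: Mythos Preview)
Your argument is correct. You count $b$-dibranching mobiles directly via Theorem~\ref{thm:girth2b} and Claim~\ref{claim:bij}, deriving the recursive equations for the planted series $V_j$ from the decomposition at the root vertex, exactly as in Theorem~\ref{thm:count_gen} but with the simpler weight alphabet $\{0,-1\}$ on black half-edges. The key simplification you identify---that $V_{-1}=1$ collapses the coefficient extraction to a single binomial $\binom{2i-1}{i-j-1}$---is precisely what makes the bipartite system cleaner. One cosmetic slip: when you write ``the $V_j$ have no constant term'' you mean $V_0,\ldots,V_b$; of course $V_{-1}=1$.

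The paper, however, takes the route you mention only as an alternative at the end: it \emph{specializes} Theorem~\ref{thm:count_gen} by setting $x_{2i+1}=0$, $W_{2i}=V_i$, $W_{2i+1}=0$, and then checks by induction on total degree that every monomial of $W_{2i+1}$ contains an odd-indexed variable $x_r$, so that the odd $W$'s indeed vanish under the substitution. The paper explicitly notes that your direct-counting approach works too (``can be obtained by a direct counting of $b$-dibranching mobiles''), but chooses the specialization route. Your method has the advantage of being self-contained and of exhibiting the combinatorial meaning of each $V_j$; the paper's method has the advantage of deriving the bipartite system as a formal corollary of the general one without repeating the mobile decomposition.
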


Theorem~\ref{thm:count_bip} can be obtained by a direct counting of $b$-dibranching mobiles (which are simpler than $d$-branching mobiles). However in the proof below we derive Theorem~\ref{thm:count_bip} as a consequence of Theorem~\ref{thm:count_gen}. 

\begin{proof} Equations~\eqref{eq:systE} and~\eqref{eq:syst1_bis} are obtained respectively from~\eqref{eq:systF} and~\eqref{eq:syst2} simply by setting for all integer $i$, $x_{2i+1}=0$, $W_{2i}=V_i$, $W_{2i+1}=0$. Hence we only need to prove that the series $W_i$ defined by~\eqref{eq:systF} satisfy for all $i$, $W_{2i+1}(x_{2b},0,x_{2b+2},0,\ldots)=0$. This property holds because one can show that \emph{every monomial in the series $W_{2i+1}(x_{2b},x_{2b+1},x_{2b+2},\ldots),i\in\mathbb{Z}$ contains at least one variable $x_r$ with $r$ odd}, by a simple induction on the total degree of these monomials.
\end{proof}


\subsection{Counting maps with two root-faces}\label{sec:count_annular}
In this subsection we count rooted annular maps according to the face degrees and according to the two girth parameters. For positive integers $d,e,p,q$, we denote by $\cA\pqde$ the class of annular maps of type $(p,q)$ having non-separating girth at least~$d$ and separating girth at least~$e$. Recall that an annular map is \emph{rooted} if a corner is marked in each of the root-faces. We will now derive an expression for the generating functions $\G\pqde$ of maps obtained by rooting the annular maps in $\cA\pqde$.

\begin{theo}\label{thm:count_gen_annular}
For any positive integers $d,e,p,q$, the series $\G\pqde=\G\pqde(x_d,x_{d+1},\ldots)$ counting rooted annular maps of type $(p,q)$
with non-separating girth at least~$d$ and separating girth at least $e$ (where $x_k$ marks the number of non-root faces of degree~$k$) is 
\begin{equation}\label{eq:expC}
\G\pqde=\sum_{i=0}^{p-e}\sum_{j=0}^{q-e}\mathbf{1}_{i+j\equiv p+q \ (\mathrm{mod}\ 2)}\,\frac{2\beta(p,i,e)\beta(q,j,e)}{p+q-i-j}(1+W_0)^{(p+q-i-j)/2}W_{-1}\ \!\!^{i+j},
\end{equation}
where the formal power series $W_{-1},W_{0},\ldots,W_{d}$ are specified by~\eqref{eq:syst2}, and where
$$\displaystyle \beta(p,i,e):=\frac{p!}{i!\lfloor \tfrac{p-i-e}{2} \rfloor!\lfloor\tfrac{p-i+e-1}{2} \rfloor!}.$$
\end{theo}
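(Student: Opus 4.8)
The plan is to reduce the enumeration to the counting of $d$-branching mobiles via the bijection of Theorem~\ref{thm:girthd_annular}, and to handle the separating-girth condition in two stages, starting with the case $e=p$ in which the bijection applies directly. First I would compute $\G\pqdp$, the series of rooted annular maps of type $(p,q)$ whose separating girth equals the outer degree~$p$. By Theorem~\ref{thm:girthd_annular} and Claim~\ref{claim:bij} (the latter applied to produce the marking in each of the two root-faces), this reduces to enumerating $d$-branching mobiles of type $(p,q)$ with a marked exposed bud and a marked corner at the special vertex. The special vertex has degree~$q$ and weight~$p-q$, so its $q$ incident half-edges form a cyclic sequence of buds and half-edges of weight $0,-1,-2$ whose weights sum to $p-q$; cutting the non-bud edges attaches planted mobiles counted by the series $W_0,W_{-1},W_{-2}=1$ of~\eqref{eq:syst2}, exactly as in the proof of Theorem~\ref{thm:count_gen}. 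Tracking the cyclic arrangement of these half-edges together with the outer marking is what produces the symmetry factor $\tfrac1m$ with $m=(p+q-i-j)/2$, together with the binomial bookkeeping yielding $\beta(q,j,p)$ and $\beta(p,0,p)=p$; this settles the formula when $e=p$ (where the range forces $i=0$, consistently with the absence of legs on the outer side).

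Second, to pass from $e=p$ to an arbitrary $e\le p$ I would give a canonical decomposition of a map in $\cApqde$ along a distinguished separating cycle, using the \ddm-orientation supplied by Theorem~\ref{thm:gen_annular} to make the choice canonical (for instance the outermost shortest separating cycle, oriented minimally). Cutting along such a cycle $C$ of length $g$ splits the map into an outer annular map with root-faces of degrees $p$ and $g$ and an inner annular map with root-faces of degrees $g$ and $q$, each of non-separating girth at least~$d$ and of separating girth equal to the degree~$g$ of the cut boundary, hence (viewing $C$ as the outer boundary of each piece) in a class to which the first stage applies. The ``bands'' joining each root-face to the central cyclic chain are what the coefficients $\beta(p,i,e)$ and $\beta(q,j,e)$ enumerate: I would identify them with sequences of boundary steps (the legs, together with $\pm1$ steps recording the $1$-way/$2$-way pattern along the boundary) whose net displacement is fixed by~$e$, and count these by a cycle-lemma / lattice-path argument. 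The $i$ and $j$ legs then contribute the factors $W_{-1}^{\,i}$ and $W_{-1}^{\,j}$, while the central chain of $m$ slots, each a bud or a planted mobile of $\cW_0$, contributes $(1+W_0)^m$.

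The main obstacle is this second stage. One must make the separating-cycle decomposition genuinely bijective — choosing $C$ canonically so that the outer region contains no competing separating cycle and regluing is unambiguous, which is exactly where the minimality and accessibility properties of the \ddm-orientation are needed. One must then prove that the outer and inner bands are counted precisely by the $\beta$-coefficients. The delicate points are the cyclic gluing along~$C$, responsible for the factor $\tfrac1m$ (a length-$m$ necklace counted up to rotation) and for the parity constraint $i+j\equiv p+q \pmod 2$, and the double rooting in the two root-faces, which accounts for the numerator~$2$. Verifying that the lattice-path count evaluates to $\beta(p,i,e)=\tfrac{p!}{i!\lfloor (p-i-e)/2\rfloor!\,\lfloor (p-i+e-1)/2\rfloor!}$, and that it degenerates correctly to $\beta(p,0,p)=p$ when $e=p$, is the computational heart of the argument and the step I expect to require the most care.
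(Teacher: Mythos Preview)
Your first stage is right in spirit and matches the paper: the bijection of Theorem~\ref{thm:girthd_annular} together with Claim~\ref{claim:bij} gives
\[
\G\pqdp = p\,[u^{p-q}]\bigl(1+W_0+u^{-1}W_{-1}+u^{-2}\bigr)^{q},
\]
and expanding this multinomially one checks that it coincides with the $e=p$ specialization of~\eqref{eq:expC}. (Your attribution of the factor $1/m$ to a ``necklace symmetry'' is off, though: nothing cyclic is being quotiented, it is just the arithmetic identity $\tfrac{2}{p+q-j}\beta(q,j,p)=\gamma(q,j,p)$ between multinomials.)

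Your second stage diverges from the paper and, as written, has a real gap. The paper does \emph{not} use the \ddm-orientation to select a canonical separating cycle. Instead it partitions $\cApqde$ according to the exact separating girth $a$, and for each $a$ cuts along \emph{both} the outermost and the innermost separating cycles of length~$a$ (these are canonical for purely combinatorial reasons: uniqueness of the innermost/outermost such cycle). This produces a three-piece decomposition and the relation
\[
\wt{\G}\pqda=\frac{\G\apda\,\G\aqda}{\G\aada},
\]
where every factor is already known from stage one. The closed form~\eqref{eq:expC} then follows from the summation identity
\[
\sum_{a=e}^{\min(p-i,q-j)} a\,\gamma(p,i,a)\,\gamma(q,j,a)
= \mathbf{1}_{i+j\equiv p+q\ (\mathrm{mod}\ 2)}\,\frac{2\,\beta(p,i,e)\,\beta(q,j,e)}{p+q-i-j},
\]
proved by a short induction on~$e$. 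So the factor $2/(p+q-i-j)$ and the coefficients $\beta$ are not counting ``bands'' or lattice paths; they are the output of a telescoping binomial sum, and the parity indicator simply records when the $\gamma$'s vanish. Your proposed single cut along one orientation-canonical cycle would at best give a product of the form $\wh{\G}\apd\cdot\G\aqda$, after which you would still have to express $\wh{\G}\apd$ in terms of stage-one quantities (which is exactly what the innermost-cycle argument does) and then perform the sum over~$a$. The orientation plays no role in this part of the argument, and the combinatorial interpretation you sketch for $\beta(p,i,e)$ does not correspond to what these numbers actually encode in the proof.
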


\begin{proof} The proof has two parts. First we will use the bijection obtained in Section~\ref{sec:bij_girth_annular} in order to characterize the series $\G\pqde$ in the case $e=p$. Then we will treat the case of an arbitrary separating girth $e\geq p$ (in the case $e< p$, $\G\pqde=0$).

By definition the series $\G\pqdp$ counts maps obtained by rooting the annular maps in $\cA\pqd\equiv\cA\pqdp$. Let $\mX$ be the set of  $d$-branching mobiles of type $(p,q)$ with a marked corner at the special vertex.   
By Theorem~\ref{thm:girthd_annular} maps in $\cA\pqd$  are in bijection with the $d$-branching mobiles of type $(p,q)$. 
Hence it is easy to see from the definition of the master bijection $\Phi$ that maps obtained by marking a corner in the inner root-face of a map in  $\cA\pqd$ are in bijection with the mobiles in $\mX$.  Hence, maps obtained by rooting the annular maps in $\cA\pqd$ are in $p$-to-1 correspondence with the mobiles in $\mX$. It remains to count the mobiles in $\mX$.
Let $M$ be a mobile in $\mX$ and let $v_0$ be the special vertex. The vertex $v_0$ is black and is incident to a sequence of $q$  buds and non-dangling half-edges.  Each non-dangling half-edge $h$ incident to $v_0$ has a weight $\al$ in $\{0,-1,-2\}$, and cutting the edge containing $h$ gives a planted mobile in $\cW_\al$. Moreover the total weight of the non-dangling half-edges incident to $v_0$ is $p-q$. Conversely any sequence of $q$  buds and non-dangling half-edges satisfying these conditions gives a mobile in $\mX$. This bijective decomposition of the mobiles in $\mX$  gives an expression for the generating function of $\mX$, or equivalently for the series $\G\pqdp$:
\begin{equation}\label{eq:Npqd}
\G\pqdp=p\cdot[u^{p-q}](1+W_0+u^{-1}W_{-1}+u^{-2})^{q}.
\end{equation}
Here the summands $1$, $W_0$, $u^{-1}W_{-1}$ and $u^{-2}=u^{-2}W_{-2}$ correspond respectively to the buds and to the half-edges of weight 0, $-1$, $-2$ incident to the special vertex.

We will now derive an expression for the series $\G\pqde$ when $e\geq p$. We first partition the set $\cA\pqde$. 
For $a\leq \mathrm{min}(p,q)$, we denote by $\wt{\cA}\pqda$ the class of annular maps of type $(p,q)$ having non-separating girth at least~$d$, and separating girth \emph{exactly} $a$. Let $\wt{\G}\pqda\equiv \wt{\G}\pqda(x_{d},x_{d+1},\ldots)$ be the \gf counting rooted annular maps from $\wt{\cA}\pqda$. 
Clearly, $\cA\pqde=\uplus_{a=e}^{\min(p,q)}\wt{\cA}\pqda$, so that $\G\pqde=\sum_{a=e}^{\mathrm{min}(p,q)}\wt{\G}\pqda$.

For $p\leq q$ we denote by $\wh{\cA}\pqd$ the subfamily of $\cA\pqd$ where the unique separating cycle
of length $p$ is the contour of the outer face, and we denote by $\wh{\G}\pqd\equiv\wh{\G}\pqd(x_d,x_{d+1},\ldots)$ the \gf counting rooted annular maps from $\wh{\cA}\pqd$. Let $M$ be a rooted annular map from $\cA\pqd$. It is easy to see that there exists a unique \emph{innermost} cycle $C$ of length $p$ enclosing the inner root-face (i.e., any other cycle of length $p$ enclosing the inner root-face also encloses $C$). The cycle $C$ is simple, and after distinguishing one of the $p$ vertices on $C$, one can identify the part of $M$ outside and inside $C$ as rooted annular maps $M_1$ and $M_2$ from $\cAppd$ and $\wh{\cA}\pqd$ respectively (the marked vertex of $C$ gives the marked corners of the inner root-face of $M_1$ and the outer root-face of $M_2$). This bijective decomposition of $M$ gives $ p\,\G\pqdp=\G\ppdp \cdot \wh{\G}\pqd$, or equivalently,
$$
\wh{\G}\pqd=p\, \frac{\G\pqdp}{\G\ppdp}.
$$
Now, given a rooted annular map $M$ from $\wt{\cA}\pqda$ with root-faces $f_1$ and $f_2$, we consider the outermost and innermost cycles $C_1$ and $C_2$ of length $a$ separating $f_1$ from $f_2$. By distinguishing some vertices $v_1$ and $v_2$ from $C_1$ and $C_2$ and cutting $M$ along $C_1$ and $C_2$ one obtains three rooted annular maps respectively from $\wh{\cA}\apd$, $\cAaad$, and $\wh{\cA}\aqd$ 
(the root-corner in the root-face enclosed by $C_1$ is the one incident to $v_1$, 
and the root-corner in the root-face enclosed by $C_2$ is the one incident to $v_2$). This decomposition yields
$$
a^2\wt{\G}\pqda=\wh{\G}\apd\G\aada\wh{\G}\aqd=a^2\frac{\G\apda\G\aqda}{\G\aada}.
$$
By \eqref{eq:Npqd}, 
$$\G\apda=a \sum_{i=0}^{p-a}\gamma(p,i,a)\, (1+W_0)^{(p+a-i)/2}{W_{-1}}^i$$
where $\displaystyle \gamma(p,i,a)=\mathbf{1}_{p-i\equiv a \ (\mathrm{mod}\ 2)}\frac{p!}{i!(\tfrac{p-i-a}{2})!(\tfrac{p-i+a}{2})!}$. Hence, for $a\leq \min(p,q)$,
$$
\wt{\G}\pqda=\sum_{i=0}^{p-a}\sum_{j=0}^{q-a}a\,\gamma(p,i,a)\gamma(q,j,a)\,(1+W_0)^{(p+q-i-j)/2}{W_{-1}}^{i+j}.
$$
To conclude we have $$\G\pqde=\sum_{i=0}^{p-e}\sum_{j=0}^{q-e}\ \sum_{a=e}^{\min(p-i,q-j)}a\,\gamma(p,i,a)\gamma(q,j,a)\,(1+W_0)^{(p+q-i-j)/2}{W_{-1}}^{i+j}.$$
Moreover, the identity $$\sum_{a=e}^{\min(p-i,q-j)}a\,\gamma(p,i,a)\gamma(q,j,a)=\mathbf{1}_{i+j\equiv p+q \ (\mathrm{mod}\ 2)}\cdot\frac{2\beta(p,i,e)\beta(q,j,e)}{p+q-i-j}$$ can be obtained by a simple induction on $e$, decreasing from the base case $e=\min(p-i,q-j)$. 
Indeed, if $e\equiv p-i \equiv q-j\, (\mathrm{mod}\ 2)$, one has 
$$\frac{2}{p+q-i-j}\left(\beta(p,i,e)\beta(q,j,e)-\beta(p,i,e+1)\beta(q,j,e+1)\right)=e\,\gamma(p,i,e)\gamma(q,j,e),$$
and $\beta(p,i,e-1)=\beta(p,i,e)$, $\beta(q,j,e-1)=\beta(q,j,e)$.\\
This completes the proof of Theorem~\ref{thm:count_gen_annular}.
\end{proof}

As a corollary we obtain the following universal asymptotic behavior for the number of $n$-faces
rooted maps with restrictions on the girth and face-degrees (as mentioned in the introduction, a result
of a similar flavor was established by Bender and Canfield for bipartite maps~\cite{BeCa94}, with
no control on the girth): 

\begin{cor}
For any non-empty finite set $\Delta\subset \{d,d+1,d+2,\ldots\}$, the specialization of $\G\pqde$ obtained by setting $x_i=0$ for all $i$ not in $\Delta$ is algebraic (over the field of rational function in $x_i,i\in\Delta$). Moreover, there exist computable constants $\kappa,\gamma$ depending on~$d$ and $\Delta$ such that if $\Delta$ contains at least one even integer (resp. $\Delta$ contains only odd integers) the number $c_{d,\Delta}(n)$ of rooted plane maps of girth at least~$d$ with $n$ faces (resp. $2n$ faces), all of them having degrees in $\Delta$, is asymptotically equivalent to $\kappa\, n^{-5/2}\,\gamma^n$.
\end{cor}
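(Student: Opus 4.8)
The plan is to reduce the statement to a singularity analysis of a univariate algebraic generating function built from the series $W_{-1},W_0,\ldots,W_d$ of~\eqref{eq:syst2}. The algebraicity assertion is immediate: by~\eqref{eq:expC}, $\G\pqde$ is a polynomial in $(1+W_0)$ and $W_{-1}$ with rational coefficients (the parity indicator guarantees all exponents are non-negative integers), and after the specialization $x_i=0$ for $i\notin\Delta$ the sums over $i\geq d$ in~\eqref{eq:syst2} become finite, so the finitely many series $W_{-1},\ldots,W_d$ satisfy a finite polynomial system over $\mathbb{Q}(x_i:i\in\Delta)$; by elimination each $W_j$, and hence $\G\pqde$, is algebraic. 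This is exactly the reasoning already used for $F_d$ in Theorem~\ref{thm:count_gen}. To reach $c_{d,\Delta}(n)$ I would write its generating function $C(x):=\sum_n c_{d,\Delta}(n)\,x^n$, with $x$ marking the total number of faces, as a rational combination of the $W_j$ specialized at $x_i=x\cdot\mathbf{1}_{i\in\Delta}$, using the standard rooting, pointing, and root-face decompositions of the paper to pass from the girth-at-least-$d$ class to the classes counted by Theorems~\ref{thm:count_gen} and~\ref{thm:count_gen_annular} (the bounded-degree root-face contributing only a finite factor); $C(x)$ is then univariate algebraic.

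Next I would analyse the singular structure of the $W_j(x)$. Under $x_i=x\cdot\mathbf{1}_{i\in\Delta}$ the system~\eqref{eq:syst2} is a \emph{positive} polynomial system: the defining polynomials (the $h_{j+2}$ and the expanded coefficient extractions $[u^{j+2}](\cdots)$) have non-negative coefficients in $x$ and in the $W_k$, consistent with the fact that the $W_j$ count planted mobiles. I would then invoke the Drmota--Lalley--Woods theorem: provided the system is irreducible (its dependency digraph is strongly connected) and aperiodic, the $W_j(x)$ share a common dominant singularity $\rho>0$, are analytic in a delta-domain around it, and admit a square-root expansion $W_j(x)=\tau_j-\lambda_j\sqrt{1-x/\rho}+O(1-x/\rho)$ with $\lambda_j>0$. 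Irreducibility amounts to checking that the black-vertex equations (coupling $W_0,W_{-1}$) and the white-vertex equations (coupling $W_1,\ldots,W_{d-1}$) genuinely feed back into one another, which should hold for any non-empty $\Delta$.

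The aperiodicity hypothesis is precisely what produces the two cases. If $\Delta$ contains an even integer, the achievable numbers of faces have gcd $1$, so $C$ and the $W_j$ are aperiodic in $x$ and the transfer applies directly in $n$. If $\Delta$ consists only of odd integers, then since $\sum_f\deg(f)=2\,\#\{\text{edges}\}$ is even while every $\deg(f)$ is odd, the number of faces is forced to be even; the relevant series then live in $x^2$ (period $2$), and after the substitution $y=x^2$ the system is aperiodic in $y$, giving asymptotics in $n$ for maps with $2n$ faces. This accounts for the ``resp.'' clause.

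Finally, substituting the square-root expansions of the $W_j$ into the algebraic expression for $C(x)$ gives, generically, $C(x)=c_0-c_1\sqrt{1-x/\rho}+\cdots$, which would yield an $n^{-3/2}$ law. The passage to the map universality exponent $n^{-5/2}$ comes from the vanishing of $c_1$: the map-counting functional is \emph{critical}, i.e.\ its gradient at $(\tau_j)$ is orthogonal to the null direction $(\lambda_j)$ of the degenerate Jacobian of~\eqref{eq:syst2} at $\rho$. When $c_1=0$ the expansion reads $C(x)=c_0+c_2(1-x/\rho)-c_3(1-x/\rho)^{3/2}+\cdots$ with $c_3\neq0$, and the Flajolet--Odlyzko transfer theorem gives $[x^n]C(x)\sim\kappa\,n^{-5/2}\gamma^n$ with $\gamma=1/\rho$ and $\kappa=-c_3/\Gamma(-3/2)$, both computable from the finite system. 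I expect the main obstacle to be exactly this cancellation: proving that the linear square-root term drops out, equivalently that $C$ sits at a critical point of the underlying tree substitution. I would establish it by rewriting $C$ so that its singular part is manifestly that of an ``integrated'' (one-point) quantity whose $x$-derivative carries the $1/2$-singularity, thereby forcing the $3/2$ exponent; verifying this orthogonality in general, together with the irreducibility and aperiodicity hypotheses of Drmota--Lalley--Woods, constitutes the bulk of the work.
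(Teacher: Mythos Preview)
Your plan---algebraicity from the finite polynomial system, Drmota--Lalley--Woods for the square-root singularity of the $W_j$, and the period-$2$ reduction when $\Delta$ is all odd---matches the paper's proof exactly. Where you diverge is on the passage from $n^{-3/2}$ to $n^{-5/2}$, which you frame as a cancellation ($c_1=0$) to be established via criticality/orthogonality of the map functional. The paper sidesteps this entirely with a one-line combinatorial identity: marking an inner face of a rooted map in the class gives a rooted annular map, so
\[
(n+1)\,c_{d,\Delta}(n+2)\;=\;\sum_{p,q\in\Delta}\frac{1}{q}\,[t^{n}]\,\G\pqdd\,.
\]
Since each $\G\pqdd$ is a polynomial in $1+W_0$ and $W_{-1}$ with non-negative coefficients, Drmota--Lalley--Woods gives $[t^n]\G\pqdd\sim \alpha\,n^{-3/2}\gamma^n$ directly, and dividing by $n+1$ produces the $n^{-5/2}$ law with no cancellation to verify. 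Your closing remark about ``an integrated quantity whose $x$-derivative carries the $1/2$-singularity'' is precisely this idea in analytic dress; the paper just makes it concrete and combinatorial, so the step you flag as ``the bulk of the work'' evaporates. (Incidentally, the paper does not check the irreducibility and aperiodicity hypotheses of Drmota--Lalley--Woods any more carefully than you do---it simply cites \cite[VII.6]{fla} and asserts the theorem applies.)
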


\begin{proof}
The algebraicity of the series $W_{-2}\ldots W_d$ is obvious from the system \eqref{eq:syst2}, hence $\G\pqde$ is algebraic (as soon as $\Delta$ is finite). 

We now consider the specialization of the series $W_{-2}\ldots W_d$ and $\G\pqde$ obtained by replacing all the variable $x_i,i\in\Delta$ by $t$ (and setting the other variables $x_i$ to 0). We suppose first that $\Delta$ contains at least one even integer. 
Given the form of the system \eqref{eq:syst2}, the Drmota-Lalley-Wood theorem~(see~\cite[VII.6]{fla}) implies that the series $W_i,i\in[-1..d-1]$ all have the same ``square-root type'' singularity at their unique dominant singularity $\gamma$. Therefore, the same applies to $\G\pqde$, implying $[t^n]\G\pqde \sim \alpha \, n^{-3/2}\gamma^n$ for some computable constants $\alpha,\gamma$ (depending on $p,q,d,e,\Delta$). Observe that $\frac{1}{q}[t^n]\G\pqdd$ counts rooted plane maps of girth at least~$d$, with a root-face of degree $p$, a marked inner face of degree $q$, and $n$ additional inner faces having degrees in $\Delta$. Therefore, 
$$(n+1)\, c_{d,\Delta}(n+2)=\sum_{p,q\in\Delta} \frac{1}{q}[t^{n}]\G\pqdd.$$ 
This gives the claimed asymptotic form of $c_{d,\Delta}(n)$ when $\Delta$ contains an even integer. 

If $\Delta$ contains only odd integers, one has to deal with the \emph{periodicity} of the series $W_i,i\in[-1..d-1]$ (one can easily check that $[t^i]W_j=0$ unless $i\equiv j \ (\mathrm{mod}\ 2)$, and $[t^n]\G\pqde=0$ unless $n\equiv p+q \ (\mathrm{mod}\ 2)$). However, up to using a variable $z=t^2$, one can still use the Drmota-Lalley-Wood theorem to prove the asymptotic form $[t^{2n}]\G\pqde \sim \alpha\, n^{-3/2}\,\gamma^n$ for $p,q\in\Delta$, from which the stated result follows.
\end{proof}

As in Section~\ref{sec:count_one_root_face}, the generating functions have a simpler expression in the bipartite case:

\begin{theo}\label{thm:count_bip_annular}
For $b,c,r,s$ positive integers, let $B\rsbc\equiv \G\Brsbc(x_{2b},0,x_{2b+2},0,\ldots)$ be the \gf of rooted annular bipartite maps from $\cA\Brsbc$,
where each variable $x_{2i}$ marks the number of inner faces of degree $2i$. Then,
\begin{equation}\label{eq:expB}
B\rsbc=\frac{4rs}{r+s}\binom{2r-1}{r-c}\binom{2s-1}{s-c}(1+V_0)^{r+s},
\end{equation}
where $V_{0},\ldots,V_{b}$ are given by~\eqref{eq:syst1_bis}. 
\end{theo}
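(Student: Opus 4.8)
The plan is to obtain Theorem~\ref{thm:count_bip_annular} as the bipartite specialization of the closed form~\eqref{eq:expC} for $\G\pqde$, exactly as Theorem~\ref{thm:count_bip} was derived from Theorem~\ref{thm:count_gen}. Concretely, I would put $p=2r$, $q=2s$, $d=2b$, $e=2c$ in~\eqref{eq:expC} (so that $\G\pqde$ becomes $\G\Brsbc$) and then restrict to the bipartite slice by setting $x_{2i+1}=0$ for all~$i$, which by definition produces $B\rsbc$.

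First I would record how the auxiliary series $W_{-1}$ and $W_0$ behave under this substitution. By the same parity induction already used in the proof of Theorem~\ref{thm:count_bip} --- every monomial of $W_{2i+1}(x_{2b},x_{2b+1},\ldots)$ contains at least one variable $x_r$ with $r$ odd --- the series $W_{-1}$ vanishes once all odd-indexed variables are set to~$0$, whereas $W_0$ specializes to the series $V_0$ of~\eqref{eq:syst1_bis} (since $W_{2i}=V_i$). This vanishing of $W_{-1}$ is the crucial simplification: in the double sum~\eqref{eq:expC} every summand carrying a positive power of $W_{-1}$ drops out, so only the term $i=j=0$ survives (with the convention $W_{-1}^{\,0}=1$). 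For that term the parity indicator $\mathbf{1}_{i+j\equiv p+q\ (\mathrm{mod}\ 2)}$ is automatically satisfied because $p+q=2r+2s$ is even, and the exponent $(p+q-i-j)/2$ equals $r+s$.

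What remains is the single contribution
$$B\rsbc=\frac{2\,\beta(2r,0,2c)\,\beta(2s,0,2c)}{2r+2s}\,(1+V_0)^{r+s}.$$
Then I would evaluate the two floor functions in $\beta(2r,0,2c)$, obtaining $\lfloor\tfrac{2r-2c}{2}\rfloor=r-c$ and $\lfloor\tfrac{2r+2c-1}{2}\rfloor=r+c-1$, so that
$$\beta(2r,0,2c)=\frac{(2r)!}{(r-c)!\,(r+c-1)!}=2r\binom{2r-1}{r-c},$$
and symmetrically $\beta(2s,0,2c)=2s\binom{2s-1}{s-c}$. Substituting these and simplifying the prefactor $2/(2r+2s)=1/(r+s)$ yields precisely the expression~\eqref{eq:expB}.

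Since each step is an elementary substitution, there is no genuine analytic difficulty here. The only point requiring real care is the justification that $W_{-1}$ indeed vanishes on the slice $x_{2i+1}=0$, which is exactly where I would invoke the parity argument established for Theorem~\ref{thm:count_bip}; once that is granted, the collapse of the double sum to its $i=j=0$ term and the rewriting of the factorial ratios as binomial coefficients scaled by $2r$ and $2s$ is routine bookkeeping.
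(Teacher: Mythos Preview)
Your proposal is correct and follows exactly the same route as the paper's own proof: specialize~\eqref{eq:expC} at $p=2r$, $q=2s$, $d=2b$, $e=2c$ with $x_{2i+1}=0$, invoke the parity argument from Theorem~\ref{thm:count_bip} to get $W_{-1}=0$ and $W_0=V_0$ so that only the term $i=j=0$ survives, and then simplify the resulting $\beta$-factors. You even spell out the identity $\beta(2r,0,2c)=2r\binom{2r-1}{r-c}$ that the paper leaves implicit.
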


\begin{proof}
Again the expression can either be obtained by a direct counting of $b$-dibranching mobiles (which are simpler than $d$-branching mobiles), or just by specializing the expression in Theorem~\ref{thm:count_gen_annular}.
As we have seen in the proof of Theorem~\ref{thm:count_bip},
when $x_{2i+1}=0$ for all integer $i$, then $W_{r}=0$ for all odd $r\in[-1..d]$
and the series $V_i:=W_{2i}$ satisfy~\eqref{eq:syst1_bis}. Since $W_{-1}=0$, there remains
only the initial term ($i=0$ and $j=0$) in the expression~\eqref{eq:expC} of $\G\Brsbc(x_{2b},0,x_{2b+2},0,x_{2b+4}\ldots)$,
which gives~\eqref{eq:expB}.
\end{proof}

\subsection{Exact formula for simple bipartite maps}
In this subsection, we obtain a closed formula for the number of rooted simple bipartite maps 
from the case $b=2$ of Theorem~\ref{thm:count_bip_annular}.
\begin{prop}[simple bipartite maps]\label{prop:rootedsimple}
Let $k\geq 2$, and let $n_2,n_3,\ldots,n_k$ be non-negative integers not all equal to zero.
The number of rooted simple bipartite maps
with $n_i$ faces of degree $2i$ for all $i\in\{2,3,\ldots, k\}$ is
\begin{equation}\label{eq:rootedsimple}
2\frac{(e+n-3)!}{(e-1)!}\prod_{i=2}^k\frac1{n_i!}\binom{2i-1}{i-2}^{n_i},
\end{equation}
where $n=\sum_in_i$ is the number of faces, and $e=\sum_iin_i$ is the number of edges.
\end{prop}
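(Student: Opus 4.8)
The plan is to derive Proposition~\ref{prop:rootedsimple} as a specialization of Theorem~\ref{thm:count_bip_annular} with $b=2$. Recall that simple bipartite maps are exactly bipartite maps of girth at least $2b=4$, so that the relevant generating function is the one counting maps whose non-root faces all have degree at least $4$. The key observation is that a \emph{rooted} map (with a marked corner) can be obtained by a standard bijective device: marking a corner in an arbitrary face and treating that face as a (root-)face. Thus I would first express the number we want in terms of the bipartite annular series $B\rsbc$ of Theorem~\ref{thm:count_bip_annular}, summing over the choice of the degree $2r$ of the marked root-face and the degree $2s$ of the marked inner face, in the regime $c=b=2$ where the separating-girth constraint is vacuous (separating girth at least $4$ imposes nothing beyond girth $\geq 4$ once the two marked faces themselves have degree $\geq 4$, which is guaranteed by simplicity). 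Concretely, $\tfrac{1}{2s}[t^{\ldots}]B_{2,2}^{(r,s)}$ should count rooted simple bipartite maps with a distinguished extra face of degree $2s$, which after summation recovers the fully rooted count.

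First I would set $b=c=2$ in formula~\eqref{eq:expB}, giving
$$
B_{2,2}^{(r,s)}=\frac{4rs}{r+s}\binom{2r-1}{r-2}\binom{2s-1}{s-2}(1+V_0)^{r+s},
$$
where $V_0$ is the series from~\eqref{eq:syst1_bis} for $b=2$. The next step is to make $V_0$ explicit: for $b=2$ the system~\eqref{eq:syst1_bis} reduces to a single equation for $V_0$, namely $V_0=\sum_{i\geq 2}x_{2i}\binom{2i-1}{i-1}(1+V_0)^{i}$, which is a Lagrangian (tree-like) equation. I would then extract the coefficient of $\prod_i x_{2i}^{n_i}$ in the appropriate rooted series. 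The powers of $(1+V_0)$ are handled by the Lagrange inversion formula: writing $1+V_0=:Y$ and noting $Y=1+\sum_{i\geq 2}x_{2i}\binom{2i-1}{i-1}Y^i$, one extracts $[x^{\mathbf{n}}]Y^{m}$ for the relevant exponent $m$ via Lagrange inversion on $Y$. The binomial prefactors $\binom{2i-1}{i-1}$ appearing in the defining equation, together with the prefactor $\binom{2s-1}{s-2}$ from $B_{2,2}^{(r,s)}$, are what will combine to produce the product $\prod_i \binom{2i-1}{i-2}^{n_i}$ in~\eqref{eq:rootedsimple}, while the Lagrange-inversion Jacobian factor will produce the falling-factorial quotient $(e+n-3)!/(e-1)!$ once the relation $e=\sum_i i n_i$ between the number of edges and the face degrees is used (this is just the Euler-relation bookkeeping for bipartite maps).

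I expect the main obstacle to be the combinatorial accounting that reduces the fully rooted count to the annular series, i.e.\ correctly justifying the factor and the summation over $r,s$ (the degrees of the two marked faces) and cancelling the symmetry factors $2r$, $2s$ so that only the overall factor $2$ in~\eqref{eq:rootedsimple} survives. One has to be careful that in a rooted map exactly one face is distinguished by the root-corner, whereas the annular setup distinguishes two faces; the resolution is that summing $\tfrac{1}{2s}B_{2,2}^{(r,s)}$ over the degree of the \emph{second} marked face, with weight $1$ per face of that degree, reconstitutes the generating function of singly rooted maps (each rooted map contributes once per non-root face, and the $1/(2s)$ removes the corner-choice in that face). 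After this reduction the remaining work is the Lagrange-inversion extraction, which is routine. Finally I would verify the exponent bookkeeping: the total exponent of $Y$ across the computation must equal $r+s$ minus contributions absorbed by inversion, and matching this against $e+n-3$ via $\sum_i(i-1)n_i=e-n$ and Euler's relation is the last step, yielding exactly~\eqref{eq:rootedsimple}.
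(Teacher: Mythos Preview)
Your overall strategy---specialize Theorem~\ref{thm:count_bip_annular} at $b=c=2$ and extract coefficients by Lagrange inversion---is exactly the paper's, but there are two concrete problems in your plan.

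First, your equation for $V_0$ is wrong. For $b=2$ the system~\eqref{eq:syst1_bis} gives $V_0=h_1(V_1)=V_1$ from the first line, and then $V_1=\sum_{i\geq 2}x_{2i}\binom{2i-1}{i-2}(1+V_0)^{i+1}$ from the second line (case $j=b-1=1$). Hence $R:=1+V_0$ satisfies
\[
R=1+\sum_{i\geq 2}x_{2i}\binom{2i-1}{i-2}R^{\,i+1},
\]
not $V_0=\sum_{i\geq 2}x_{2i}\binom{2i-1}{i-1}(1+V_0)^i$ as you wrote (you plugged $j=0$ into the wrong branch of~\eqref{eq:syst1_bis}). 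This matters: the binomials $\binom{2i-1}{i-2}$ in the correct equation are precisely what produce the factor $\prod_i\binom{2i-1}{i-2}^{n_i}$ in~\eqref{eq:rootedsimple}, and the exponent $i+1$ (rather than $i$) is what makes the Lagrange Jacobian yield $(e+n-3)!/(e-1)!$.

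Second, your accounting between the rooted count and the annular series is not worked out, and the version you sketch (summing $\tfrac{1}{2s}B^{(r,s)}_{2,2}$ over $s$) overcounts each rooted map by its number of non-root faces, which you cannot simply divide out at the level of generating functions. The paper avoids this by a cleaner device: fix any $r,s$ with $n_r,n_s>0$, set $\bar n_i=n_i-\mathbf 1_{i=r}-\mathbf 1_{i=s}$, let $N$ be the coefficient of $\prod x_{2i}^{\bar n_i}$ in $B^{(r,s)}_{2,2}$, and double-count rooted annular maps with an additional marked corner anywhere to get $2eN=4rs\,n_r n_s\,a(n_2,\ldots,n_k)$ (with the obvious modification when $r=s$). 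Then a single Lagrange inversion on $R$ computes $N$ and the formula follows. I recommend you adopt this route rather than the summation.
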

\begin{proof}
Let $a(n_2,\ldots,n_k)$ be the number of rooted simple bipartite maps with $n_i$ faces of degree $2i$ for $i\in[2..k]$. If $\sum_in_i=1$ (i.e., the map has a single face), Formula~\eqref{eq:rootedsimple} gives the $e$th Catalan number, which indeed counts rooted plane trees with $e$ edges. We now suppose $\sum_in_i\geq 2$ and consider integers $r,s$ such that $\bn_i:=n_i-\mathbf{1}_{i=r}-\mathbf{1}_{i=s}$ is non-negative for all $i\in[2..k]$. Let $N$ be the number of rooted annular maps of type $(2r,2s)$ with $\bn_i$ non-root faces of degree $2i$ for $i\in[2..k]$. 
Counting in two different ways rooted annular maps with a third root (a marked corner) placed anywhere, we obtain
$2eN= 4r s n_{r}n_{s}a(n_2,\ldots,n_k)$ if $r\neq s$ and $2eN= 4rsn_{r}(n_{r}-1)a(n_2,\ldots,n_k)$ if $r=s$.
Thus it remains to prove 
\begin{equation}\label{eq:N}
N=4rs\frac{(e+n-3)!}{e!}\prod_{i=2}^k\frac1{\bn_i!}\binom{2i-1}{i-2}^{n_i}.
\end{equation}
By Theorem~\ref{thm:count_bip_annular}, $N$ is the coefficient $[x_2^{\bn_2}\ldots x_k^{\bn_k}]$ of the series
$$B_{r,s}^{(2,2)}=\frac{4rs}{r+s}\binom{2r-1}{r-2}\binom{2s-1}{s-2}R^{r+s},$$
where the series $R\equiv 1+V_0$ is specified by $R=1+\sum_{i\geq 2}x_{2i}\binom{2i-1}{i-2}R^{i+1}$.
The Lagrange inversion formula yields
\begin{equation}\label{eq:Ra}
[x_2^{\bn_2}\ldots x_k^{\bn_k}]R^a=a\frac{(\sum_i(i+1)\bn_i+a-1)!}{(\sum_ii\bn_i+a)!\bn_2!\ldots\bn_k!}\prod_{i=2}^k\binom{2i-1}{i-2}^{\bn_i},
\end{equation}
which gives \eqref{eq:N}.
\end{proof}

\begin{Remark}
With some little efforts, the proof above can be made bijective. Indeed it is not very hard to obtain the expression \eqref{eq:Ra} of the coefficients of $R^a$ bijectively starting from the combinatorial description of the 2-dibranching mobiles.
\end{Remark}


\subsection{Counting loopless maps}
In this subsection, we focus on the case $d=2$ of Theorem~\ref{thm:count_gen} and show how to deduce from it the formula given in \cite{WaLe75} (where it is obtained by a substitution approach) for the number of rooted loopless maps with $n$ edges.
First observe that, up to collapsing the root-face of degree 2 into an edge, rooted maps in $\mC_2$ identify with rooted loopless maps with at least one edge (without constraint on the degree of the root-face). Hence, the multivariate series $F_2$ counts rooted loopless maps with at least one edge, where $x_i$ marks the number of faces of degree $i$. We now consider the specialization $x_i=t^i$ in $F_2$, which gives the \gf of rooted loopless maps with at least one edge counted according to the number of half-edges. 
This series is defined by the system of equations~\eqref{eq:syst2} in the case $d=2$, under the specialization $x_i=t^i$.  Using the notation $R:=1+W_0$ and $S:=W_{-1}=W_1$, this system becomes
\begin{equation}\nonumber 
F_2(t^2,t^3,t^4,\ldots)=R-1-S^2-t\,B_3, \qquad  R= 1+t\,B_1,\qquad S= t\,B_2, 
\end{equation}
where $B_k=[u^k]B$ and 
$$B=\sum_{i\geq 0}t^i(uR+S+u^{-1})^{i}.$$

Now we observe that $B_k$ is the series of Motzkin paths ending at height $k$ where up steps, horizontal steps, and down steps have respective weights $tR$, $tS$, and~$t$. The paths ending at height $0$ are called \emph{Motzkin bridges}, and have generating function $B_0$. The paths ending at height $0$ and having non-negative height all the way are called \emph{Motzkin excursions}, and we denote by $M$ their generating function. It is a classical
exercise to show the following identities:
$$
(i)~ B_k=B_0\,(tRM)^k,~~~~    (ii)~ M=1+tSM+t^2RM^2,~~~~
(iii)~ B_0=1+tSB_0+2t^2RMB_0.
$$
In particular $(i)$ gives
$$
(iv)~ R=1+t^2B_0MR,~~~~\textrm{ and }~~~(v)~ S=t^3B_0M^2R^2.
$$
So we have a system of four equations $\{(ii),(iii),(iv),(v)\}$ for the unknown series $\{M,B_0,R,S\}$, and this system has clearly a unique power series solution.
With the help of a computer algebra system, one can extract the first coefficients and then guess and check that the solution is $\{M=\alpha,\ B_0=\alpha^2,\ R=\alpha,\ S=t^3\alpha^6\}$, where the series $\alpha\equiv \alpha(t)$ is specified by $\alpha=1+t^2\alpha^4$. Hence,  $F_2(t^2,t^3,\ldots)=\alpha^2(2-\alpha)-1$. We summarize:

\begin{prop}
Let $c_n$ be the number of rooted loopless maps with $n$ edges and let $C(t)=\sum_{n\geq 0}c_nt^n$.
Then, $C(t)=\alpha^2(2-\alpha)$, where $\alpha\equiv\alpha(t)$ is the unique formal power series satisfying $\alpha=1+t\alpha^4$.
Hence, by the Lagrange inversion formula,
\begin{equation}\label{eq:rooted_loopless_edges}
c_n=\frac{2(4n+1)!}{(n+1)!(3n+2)!}.
\end{equation}
\end{prop}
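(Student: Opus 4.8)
The plan is to take as given the identity $F_2(t^2,t^3,\ldots)=\alpha^2(2-\alpha)-1$, with $\alpha=1+t^2\alpha^4$, derived in the paragraph preceding the statement, and to deduce from it both the closed form of $C(t)$ and, via Lagrange inversion, the formula for $c_n$.

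First I would relate $C$ to $F_2$. Recall that collapsing the degree-$2$ root-face identifies rooted maps of $\mathcal{C}_2$ with rooted loopless maps having at least one edge, and that under this identification $F_2(x_2,x_3,\ldots)$ counts such maps with $x_i$ marking the faces of degree~$i$. Since the face degrees of any map sum to twice its number of edges, the specialization $x_i=t^i$ weights a loopless map with $n$ edges by $t^{2n}$; hence $F_2(t^2,t^3,\ldots)=\sum_{n\geq1}c_n\,t^{2n}$. Adding the one-vertex map ($c_0=1$) gives $\sum_{n\geq0}c_n\,t^{2n}=1+F_2(t^2,t^3,\ldots)=\alpha^2(2-\alpha)$ with $\alpha=1+t^2\alpha^4$, and replacing $t^2$ by $t$ yields the first assertion $C(t)=\alpha^2(2-\alpha)$ with $\alpha=1+t\alpha^4$.

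Next I would extract $c_n=[t^n]C(t)$. Put $\beta:=\alpha-1$, so that $\alpha=1+t\alpha^4$ becomes $\beta=t\,\phi(\beta)$ with $\phi(x):=(1+x)^4$; thus $\beta$ is the unique formal power series solving a Lagrange-type equation. Writing $C=\alpha^2(2-\alpha)=(1+\beta)^2(1-\beta)=H(\beta)$ with $H(x):=(1+x)^2(1-x)$, the Lagrange inversion formula gives
\[
c_n=[t^n]\,H(\beta)=\frac1n\,[x^{n-1}]\,H'(x)\,\phi(x)^n .
\]
The key simplification is to factor $H'(x)=(1+x)(1-3x)=4(1+x)-3(1+x)^2$, which turns $H'(x)\phi(x)^n$ into $4(1+x)^{4n+1}-3(1+x)^{4n+2}$ and hence $c_n=\tfrac1n\big(4\binom{4n+1}{n-1}-3\binom{4n+2}{n-1}\big)$.

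Finally I would finish by a factorial manipulation: factoring out $\binom{4n+1}{n-1}=\frac{(4n+1)!}{(n-1)!\,(3n+2)!}$ and using $\binom{4n+2}{n-1}=\binom{4n+1}{n-1}\cdot\frac{4n+2}{3n+3}$, the bracket equals $\binom{4n+1}{n-1}\big(4-\frac{4n+2}{n+1}\big)=\binom{4n+1}{n-1}\cdot\frac{2}{n+1}$, so that the $\tfrac1n$ prefactor collapses everything to $\frac{2(4n+1)!}{(n+1)!\,(3n+2)!}$. I expect the only genuinely delicate step to be the bookkeeping of the first paragraph---correctly matching the edge-grading $t^{2n}$ to the face-degree specialization and inserting the empty map to pass from $F_2$ to $C$; the subsequent Lagrange-inversion computation, while it hinges on the right factorization of $H'$, is routine.
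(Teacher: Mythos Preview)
Your proposal is correct and follows the paper's approach exactly: the paper derives $F_2(t^2,t^3,\ldots)=\alpha^2(2-\alpha)-1$ in the discussion preceding the proposition and then states the result with only the words ``by the Lagrange inversion formula'' as justification. Your argument simply makes explicit the two steps the paper leaves to the reader---the passage from $F_2$ to $C$ via the half-edge substitution and the addition of the trivial map, and the actual Lagrange inversion computation---and both are carried out correctly.
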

Formula~\eqref{eq:rooted_loopless_edges} was already obtained in~\cite{WaLe75} using a substitution approach.
The sequence $\tfrac{2(4n+1)!}{(n+1)!(3n+2)!}$ appears recurrently in combinatorics, for instance it also counts
rooted simple triangulations with $n+3$ vertices~\cite{T62a,Poulalhon:triang-3connexe+boundary}, and intervals in the $n$th Tamari lattice~\cite{ch06,BeBo07}.\\

\section{Special cases $b=1$ and $d=0,1,2$.}\label{sec:special}
In this section, we take a closer look at the bijections given in Section~\ref{sec:bij_girth} in the particular cases $b=1$ and $d=1,2$. We also explain how to include the case $d=0$.

\subsection{Case $b=1$ (general bipartite maps) and relation with~\cite{Sc97}}
Let  $\mB$ be the class of bipartite plane maps of outer degree~2. Note that maps in $\mB$ have girth 2 (since bipartite maps cannot have cycles of length 1).  Moreover, $\mB$ can be identified with  the class of  bipartite maps with a marked edge (since the root-face of degree 2 can be collapsed into a marked edge). The case $b=1$ of Theorem~\ref{thm:girth2b} (illustrated in Figure~\ref{fig:loopless2}) gives a bijection between the class $\mB$, and the class of 1-dibranching mobiles.  We now take a closer look at this bijection and explain its relation with~\cite{Sc97}. In~\cite{Sc97}  Schaeffer obtained a bijection for \emph{Eulerian maps} (maps with vertices of even degree) with a marked edge. From the above remarks it follows that the class of Eulerian maps with a marked edges can be identified with the class $\mB$ via duality.

Observe from Figure~\ref{fig:b-dibranching-edges} that  1-dibranching mobiles have only two types of edges, and that their weights are redundant. Moreover all the white vertices are leaves. Hence, it is easy to see that the class of 1-dibranching mobiles identifies with the class of (unweighted) bicolored plane trees such that white vertices are leaves, and any black vertex adjacent to $\ell$ white leaves has degree $2+2\ell$. These are exactly the \emph{blossoming trees} defined by Schaeffer in~\cite{Sc97} (the white leaves are called ``stems'' there). Moreover the bijection of Schaeffer coincides with ours via duality: to obtain the map from the tree, the closure operations described respectively in Proposition~\ref{prop:closure} and in~\cite{Sc97} are the same.




The following formula (originally due to Tutte~\cite{T62b}) for the number $b[n_1,\ldots,n_k]$ of rooted bipartite maps with $n_i$ faces of degree $2i$ for $1\leq i\leq k$ can be obtained by counting blossoming trees (i.e., 1-dibranching mobiles) as done by Schaeffer in~\cite{Sc97}:
\begin{equation}\nonumber
b[n_1,\ldots,n_k]=2\frac{e!}{v!}\prod_{i=1}^k\frac1{n_i!}\binom{2i-1}{i-1}^{n_i},
\end{equation}
where $e=\sum_iin_i$ and $v=2+e-\sum_in_i$.

\subsection{Case $d=2$ (loopless maps) and relation with \cite[Thm.~2.3.4]{Schaeffer:these}}\label{sec:loopless}
We call \emph{edge-marked loopless map} a loopless planar maps with a marked edge. It is clear that the class $\mC_2$ (plane maps of girth $2$ and outer degree $2$) can be identified with the  class of edge-marked loopless maps  (since the root-face of degree 2 can be collapsed into a marked edge). Hence, for $d=2$, Theorem~\ref{thm:girthd} gives a bijection between edge-marked loopless maps and 2-branching mobiles. Some cases of this bijection are represented in Figure~\ref{fig:loopless2}.

\fig{width=\linewidth}{loopless2}{Bijection for $d=2$ on 3 examples. 
The example in the middle column is bipartite hence gives a 1-dibranching mobile (in this case all the white vertices are leaves). The example in the right column has all its inner faces of degree~3 (in this case all the white vertices have degree $2$).}

Observe from Figure~\ref{fig:d-branching-edges} that  2-branching mobiles have only three types of edges, and that their weights are redundant. Up to forgetting these weights, the 2-branching mobiles are the bicolored plane trees such that there is no white-white edges, white vertices have degree 1 or 2, and black vertices adjacent to $\ell$ white leaves are incident to a total of $\ell+2$ buds or black-black edges. 
We now consider the specialization of our bijection for $d=2$ to triangulations (right of Figure~\ref{fig:loopless2}), and its relation with the bijection described by Schaeffer in~\cite[Thm.~2.3.4]{Schaeffer:these} for \emph{bridgeless cubic maps} (these are the dual of loopless triangulations). By the preceding remarks, Theorem~\ref{thm:girthd} yields a bijection between edge-marked loopless triangulations and the mobiles with the following properties: there are no white-white edges, every white vertex has degree 2, and  every black vertex has degree 3 and is adjacent to a unique white vertex. Clearly, these mobiles identify with the (unicolored) binary trees endowed with a perfect matching of the inner nodes. These are exactly the \emph{blossoming trees} shown to be in bijection with bridgeless cubic maps in~\cite[Thm.~2.3.4]{Schaeffer:these} (see also~\cite{PS03a}). Moreover, the bijection in~\cite[Thm.~2.3.4]{Schaeffer:these} coincides with ours via duality: to obtain the map from the tree, the closure operations described respectively in Proposition~\ref{prop:closure} and in~\cite[Thm.~2.3.4]{Schaeffer:these} are the same.

\subsection{Case $d=1$ (general maps) and relation with~\cite{Boutt}.}
The case $d=1$ of Theorem~\ref{thm:girthd} gives a bijection between the class $\mC_1$  and 1-branching mobiles. By definition, $\mC_1$ is the class of plane maps of girth $1$ and outer degree $1$. Hence, $\mC_1$ is the class of plane maps without girth constraint 
 such that the root-face is a loop. Note that this class can be identified with the class of rooted planar maps (indeed the root-face can be collapsed and thought as simply marking a corner). We now take a closer look at the bijection between  the class $\mC_1$  and 1-branching mobiles, and its relation with~\cite{Boutt}.
   In \cite{Boutt} Bouttier, Di Francesco and Guitter obtained a bijection for \emph{$1$-legged maps}, that is, planar maps with a marked vertex of degree $1$. Observe that the class of $1$-legged maps identifies with the class $\mC_1$ by duality (the marked vertex of degree 1 becomes a marked face of degree 1 via duality).

\begin{figure}
\begin{center}
\includegraphics[width=\linewidth]{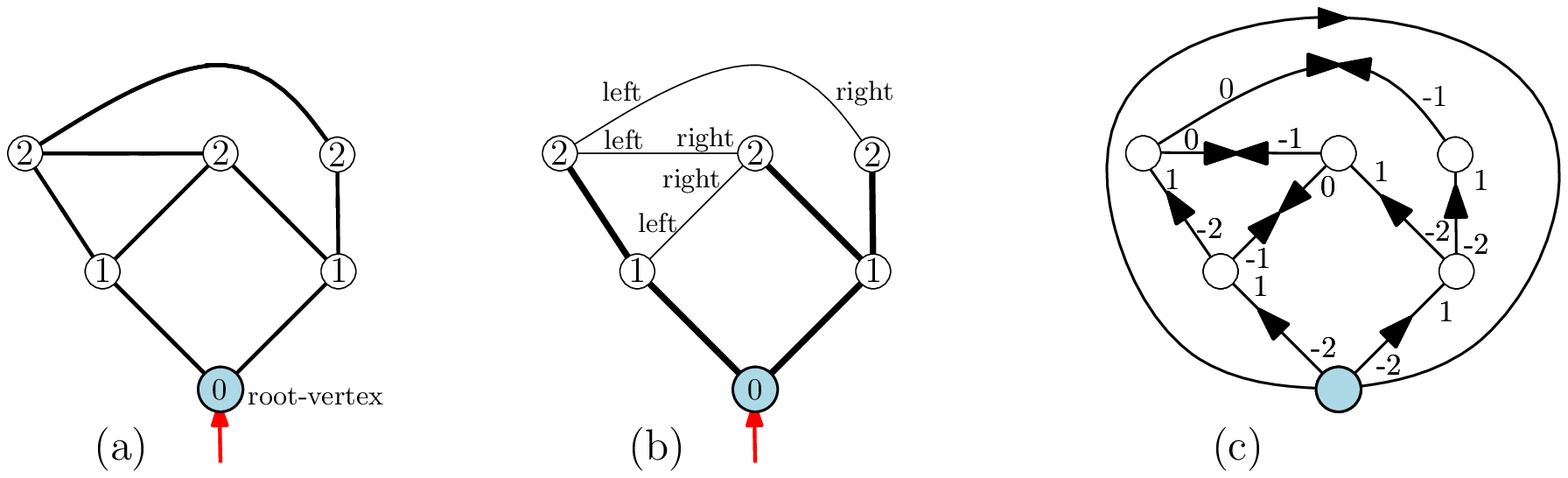}
\end{center}
\caption{(a) A rooted map $M$ and the root-distances. (b) The rightmost BFS-tree (thick lines) and the \emph{left} and \emph{right} half-edges. (c) The map in $\mC_1$ corresponding to $M$ endowed with its suitable $1/(-1)$-orientation.}
\label{fig:BFS-tree}
\end{figure}

\begin{figure}
\begin{center}
\includegraphics[width=\linewidth]{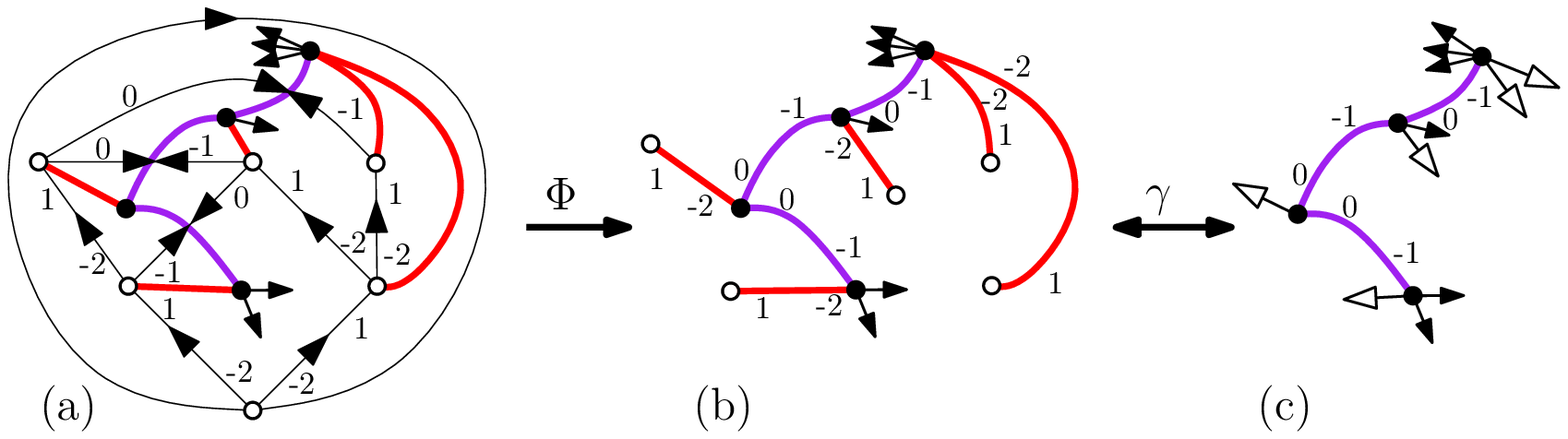}
\end{center}
\caption{
Bijection in the case $d=1$ and its relation with~\cite{Boutt}. (a) The bijection $\Phi$. (b) The resulting 1-branching mobile $B$. (c) The well-charged tree $\gamma(B)$.}
\label{fig:bij_d=1}
\end{figure}

We will first characterize the suitable $1/(-1)$-orientations. Let $M$ be a rooted map. We call \emph{root-distance} of a vertex $v$ the minimal length of the paths joining the root-vertex and $v$. A spanning tree of $M$ is a \emph{BFS-tree} (or \emph{bread-first-search tree}) if the root-distance of any vertex is the same in the map and in the tree. The root-distances and a BFS-tree are shown in Figure~\ref{fig:BFS-tree}. Let $T$ be a BFS-tree and let $e$ be an edge not in $T$. The edge $e$ creates a cycle with $T$ which separates two regions of the plane. We call \emph{left-to-right orientation} of $e$ the orientation such that the region on the left of $e$ contains the root-face. The outgoing and ingoing half-edges of $e$ are then called \emph{left} and \emph{right} half-edges. It is easy to see (see e.g.~\cite{OB:covered-maps}) that there exists a unique BFS-tree, called \emph{rightmost BFS-tree}, such that the root-distance does not decrease along edges not in $T$ traversed left-to-right. The following characterization of suitable $1/(-1)$-orientations is illustrated in Figure~\ref{fig:BFS-tree}.
\begin{prop}\label{prop:ruleBFS}
Let $M$ be a map in $\mC_1$ and let $T$ be its rightmost BFS-tree. Then, the suitable $1/(-1)$-orientation of $M$ is obtained as follows:
\begin{itemize}
\item Every edge in $T$ is 1-way, oriented from parent to child with weight $-2$ on the outgoing half-edge and weight 1 on the ingoing half-edge.
\item Every inner edge $e$ not in $T$ is 0-way with weight 0 and $-1$ on the half-edges. The weight $0$ is given to the left half-edge if the root-distance of the two endpoints of $e$ is the same and to the right half-edge otherwise. 
\end{itemize}
\end{prop}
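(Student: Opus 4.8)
The plan is to exhibit the biorientation $O$ described in the statement, check that it is a \emph{suitable} $1/(-1)$-orientation, and then invoke the uniqueness assertion of Theorem~\ref{thm:gen} (the case $d=1$) to conclude that $O$ is the suitable $1/(-1)$-orientation of $M$. First I would verify that $O$ is a $1/(-1)$-orientation in the sense of Definition~\ref{def:ddmorient}. Conditions (i) and (ii) are immediate: each tree edge has weight $-2+1=-1$ and each non-tree inner edge has weight $0+(-1)=-1$; and every inner vertex $v$ has exactly one ingoing half-edge, namely the weight-$1$ half-edge of the tree edge coming from its parent (edges to children are outgoing, and non-tree edges are $0$-way hence outgoing at $v$), so $\weight(v)=1$. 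Admissibility holds by construction of the outer boundary and because every inner half-edge at the root-vertex $u$ is outgoing. The orientation is accessible from $u$ since orienting $T$ from parent to child makes every vertex reachable from $u$ along $T$. Minimality is also easy: a $0$-way edge lies on no directed path, so every directed path uses only tree edges (parent to child); as $T$ is acyclic there is no directed circuit at all, and in particular no counterclockwise one.

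The real work is condition (iii), namely $\deg(f)+\weight(f)=1$ for every inner face $f$. I would traverse $\partial f$ keeping $f$ on its right, getting a closed walk $u_0,u_1,\dots,u_k=u_0$ with $k=\deg(f)$, and attach to the edge-side from $u_i$ to $u_{i+1}$ the half-edge $h$ at $u_i$ pointing toward $u_{i+1}$; this is precisely the side's half-edge having $f$ on its right. The contribution of that edge-side to $\deg(f)+\weight(f)$ is $1+w(h)$ when $h$ is outgoing and $1$ when $h$ is ingoing. A short case analysis over the two edge types, using that root-distance is non-decreasing along non-tree edges read left-to-right (the defining property of the rightmost BFS-tree) to align the weight-$0$/weight-$(-1)$ rule with the walk direction, rewrites this contribution as $\bigl(\rho(u_i)-\rho(u_{i+1})\bigr)+\epsilon_i$, where $\rho$ denotes root-distance and $\epsilon_i\in\{0,1\}$. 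Since $\sum_i\bigl(\rho(u_i)-\rho(u_{i+1})\bigr)=0$ telescopes around the closed walk, condition (iii) reduces to the clean statement $\sum_i\epsilon_i=1$.

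It then remains to show that exactly one edge-side of $f$ carries $\epsilon_i=1$. The case analysis identifies $\epsilon_i=1$ with the side being a \emph{non-tree} edge $e$ traversed along its left-to-right direction, equivalently with $f$ lying inside the fundamental cycle $C_e$ formed by $e$ and the tree path between its endpoints (the root-face always sits in the unbounded region of $C_e$). To count these sides I would pass to the dual: the duals of the non-tree edges form a spanning tree $T^{*}$ of the dual map, which I root at the root-face. Euler's relation makes the number of non-tree edges equal to the number of inner faces, and the non-tree edges on $\partial f$ are exactly the primal edges dual to the $T^{*}$-edges incident to $f$; among these, $f$ lies inside $C_e$ precisely for the edge dual to the parent-edge of $f$ in $T^{*}$, and for no other. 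Hence $\sum_i\epsilon_i=1$, establishing condition (iii).

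Having shown that $O$ is a suitable $1/(-1)$-orientation, the uniqueness part of Theorem~\ref{thm:gen} forces $O$ to be the suitable $1/(-1)$-orientation of $M$, which is the claim. The hard part will be condition (iii): the bookkeeping of half-edge weights around a face, and especially the reduction—via the telescoping of root-distances and the dual spanning tree—to a single marked edge-side per face, is where the rightmost BFS-tree hypothesis is genuinely used and where the argument requires care.
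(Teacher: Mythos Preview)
The paper does not actually prove this proposition; immediately after the statement it says ``We omit the (easy but tedious) proof of Proposition~\ref{prop:ruleBFS}.'' So there is no approach in the paper to compare against, and your proposal supplies exactly what the authors chose to skip.

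Your argument is correct. The verification of conditions (i), (ii), admissibility, accessibility, and minimality is routine and accurate (for minimality you should perhaps mention that the unique outer edge---the loop---is the only $1$-way edge outside $T$, and it is a \emph{clockwise} circuit by admissibility, but this is a one-line addition). The case analysis for condition (iii) checks out: with the paper's conventions the half-edge at $u_i$ pointing toward $u_{i+1}$ is indeed the one with $f$ on its right, and the four subcases (tree edge up/down, non-tree edge with equal/unequal endpoints) give exactly the telescoping term plus your indicator $\epsilon_i$, with $\epsilon_i=1$ precisely when the side is a non-tree edge traversed left-to-right, i.e.\ when $f$ lies inside the fundamental cycle $C_e$. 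Your dual-tree argument for $\sum_i\epsilon_i=1$ is the clean way to finish: since no non-tree edge can have $f$ on both sides (its dual would be a loop in $T^*$), the non-tree sides of $\partial f$ biject with the $T^*$-edges at $f$, and among these only the parent edge separates $f$ from the root-face. Invoking the uniqueness half of Theorem~\ref{thm:gen} then closes the proof.
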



We omit the (easy but tedious) proof of Proposition~\ref{prop:ruleBFS}. We now examine 1-branching mobiles and their relation with the \emph{well-charged trees} considered in~\cite{Boutt}. A \emph{charged tree} is a plane tree with two types of dangling half-edges called \emph{white arrows} and \emph{black arrows}. The \emph{charge} of a subtree $T'$ is the number of white arrows minus the number of black arrows in $T'$. A \emph{well-charged tree} is a charged tree such that cutting any edge gives two subtrees of charge 0 and -1 respectively. Now, observe from Figure~\ref{fig:d-branching-edges} that there are only two types of edges in 1-branching mobiles: black-white with weights
$(-2,1)$ or black-black with weights $(0,-1)$. It is easily seen that 1-branching mobiles are the mobile with these two type of edges such that white vertices are leaves and each black vertex $v$ has degree and weight satisfying $\deg(v)+\weight(v)=1$. 
For a 1-branching mobile $B$, we denote by $\gamma(B)$ the charged tree obtained by replacing white leaves and buds respectively by white arrows and black arrows. The mapping $\gamma$ is represented in Figure~\ref{fig:bij_d=1}. It is easy to check that for any black-black edge $e$ of $B$, the charges of the subtrees obtained by deleting the edge $e$ from $\gamma(B)$ are equal to the weights of the half-edges of $e$ incident to these subtrees. From this observation it easily follows that $\gamma$ is a bijection between 1-branching mobiles and well-charged trees. 

In~\cite{Boutt} a bijection was described between $1$-legged maps and well-charged trees. This bijection actually coincide with ours via duality (and the identification $\gamma$ between 1-branching mobiles and well-charged trees). Indeed 
 to obtain the map from the tree, the closure operations, described respectively in Proposition~\ref{prop:closure} and in~\cite{Boutt}, are the same
 (and the opening operations, to get the tree from the map, rely in the same way on the 
 rightmost BFS tree). 

\subsection{The case $d=0$ and relation with~\cite{BDFG:mobiles}.}\label{sec:d=0}
We show here that a slight reformulation of our bijections allows us to include the case $d=0$, thereby recovering a bijection obtained by Bouttier, Di Francesco and Guitter in~\cite{BDFG:mobiles}.

We call \emph{plane maps of outer degree 0} a planar map with a marked vertex called \emph{outer vertex}. Any face, any edge and any non-marked vertex of such a map is called \emph{inner}. A biorientation of a plane map of outer degree 0 is called \emph{accessible} if every inner vertex can be reached from the outer vertex, it is called \emph{minimal} if every directed simple cycle has the outer vertex strictly on its left, and it is called \emph{admissible} if every half-edge incident to the outer vertex is outgoing. We then say that a biorientation of a plane map of outer degree 0 is \emph{suitable} if it is minimal, admissible and accessible. 
We now reformulate the definition of \ddm-orientations so as to include the case $d=0$: these are the admissible biorientations of plane maps of outer degree~$d$ with inner and outer half-edges having weights in $d\cup \{1,2,3\ldots\}$ and $\{-2,-1,0\}\backslash \{d\}$ respectively, and satisfying the conditions (i),(ii),(iii) of Definition~\ref{def:ddmorient} (hence the definition is unchanged for $d>0$). 

For a plane map of outer degree 0, we consider the \emph{root-distance} $D(v)$ of each vertex $v$ (its graph distance to the outer vertex) and the \emph{geodesic biorientation}, that is, the biorientation where an edge $\{v,v'\}$ with $D(v')=D(v)$ is oriented $0$-way with weight $-1$ on each half-edge, while an edge $\{v,v'\}$ with $D(v')=D(v)+1$ is oriented $1$-way toward $v'$ with weight $-2$ on the outgoing half-edge and 0 on the ingoing half-edge. 
\begin{prop}
Theorem~\ref{thm:gen} holds for all $d\geq 0$: a plane map of outer degree~$d$ admits a \ddm-orientation if and only if it has girth at least~$d$, and in this case it admits a unique suitable \ddm-orientation. 
Moreover, in the case $d=0$, the unique suitable $0/(-\!2)$-orientation is the geodesic biorientation.
\end{prop}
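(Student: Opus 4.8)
The case $d\ge 1$ of this statement is exactly Theorem~\ref{thm:gen}, already proved in Section~\ref{sec:proof}, so the only new content is the case $d=0$, where the constraint ``girth at least~$0$'' is vacuous. Hence I must prove, for every plane map $M$ of outer degree~$0$, that $M$ admits a suitable $0/(-\!2)$-orientation and that this orientation is unique and equal to the geodesic biorientation $O_g$. The plan is to verify directly that $O_g$ is suitable (existence), then to show that any suitable $0/(-\!2)$-orientation coincides with it (uniqueness). Two features of the reformulated Definition~\ref{def:ddmorient} at $d=0$ are used repeatedly: every ingoing half-edge has a nonnegative weight, while every outgoing half-edge has weight in $\{-2,-1\}$ (the value~$0$ being precisely what is excluded for outgoing half-edges when $d=0$).

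For existence I would check the six required properties of $O_g$. Conditions (i) and (ii) are immediate, since every edge has weight $(-1)+(-1)=-2$ or $(-2)+0=-2$, and every ingoing half-edge has weight~$0$ so every inner vertex has weight~$0$. Admissibility holds because the outer vertex is the unique vertex at distance~$0$, so all its incident half-edges point to distance-$1$ neighbours and are outgoing. A shortest path from the outer vertex to a vertex~$v$ traverses $1$-way edges in the forward direction only, hence is a directed path, giving accessibility; and since $O_g$ has no $2$-way edge and $D$ strictly increases along each forward $1$-way edge, there is no directed cycle, so $O_g$ is minimal. Finally, walking around an inner face $f$ with $f$ on its right and setting $\delta_i:=D(v_{i+1})-D(v_i)\in\{-1,0,1\}$, closedness of the walk gives $\sum_i\delta_i=0$, so the numbers $a$ and $b$ of forward and backward $1$-way steps agree; as forward $1$-way, $0$-way and backward $1$-way steps contribute $-2$, $-1$ and $0$ to $\weight(f)$, one gets $\weight(f)=-2a-c=-(a+b+c)=-\deg(f)$, which is condition (iii).

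For uniqueness, let $O$ be any suitable $0/(-\!2)$-orientation. Condition (ii) forces every inner vertex to have weight~$0$, and because ingoing half-edges are nonnegative this means \emph{every} ingoing half-edge has weight~$0$ (the outer vertex has none, by admissibility). Combining this with condition (i) and with the fact that outgoing half-edges lie in $\{-2,-1\}$, one deduces that there are no $2$-way edges, that every $1$-way edge carries $0$ and $-2$ on its ingoing and outgoing half-edges, and that every $0$-way edge carries $-1$ on both half-edges; thus the weights are determined by the combinatorial type of each edge, and only the underlying orientation remains to be identified. Reading the face computation of the existence part backwards, define $\omega=+1$ along each $1$-way edge (in its direction) and $\omega=0$ on each $0$-way edge: condition (iii) at a face $f$ is then \emph{equivalent} to $\sum_i\omega(v_i\to v_{i+1})=a-b=0$ around $\partial f$. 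Since every face is inner when $d=0$, the form $\omega$ has zero integral around every face boundary; as these boundaries generate the cycle space of the connected planar map $M$, the form $\omega$ is closed, hence exact, giving $D:V\to\ZZ$ (unique up to a constant) with $D(w)-D(u)=\omega(u\to w)$ on every edge. Normalising $D(\mathrm{outer})=0$, each $1$-way edge joins a value~$k$ to a value~$k+1$ toward the latter and each $0$-way edge joins equal values. Finally $|D(w)-D(u)|\le 1$ along edges gives $D(v)\le \mathrm{dist}(v)$, while a directed path from the outer vertex to~$v$ (accessibility) raises $D$ by~$1$ at each step and has length at least $\mathrm{dist}(v)$, so $D(v)\ge \mathrm{dist}(v)$; hence $D=\mathrm{dist}$ and $O=O_g$.

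The step I expect to be the main obstacle is the passage from condition (iii) to the potential~$D$: one must recast condition (iii) as the closedness of the $1$-form $\omega$ and then invoke that, on a connected planar map, vanishing around every face boundary forces vanishing around every cycle, so that $\omega$ is exact. The other point that deserves care is definitional but decisive for uniqueness: excluding the weight~$0$ for outgoing half-edges at $d=0$ is exactly what forbids the alternative $0$-way weighting $(-2,0)$, and thereby pins down the weights, and with them the orientation.
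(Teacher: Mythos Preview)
Your proof is correct and follows essentially the same approach as the paper: both arguments rest on the correspondence between $0/(-2)$-orientations and integer vertex-labellings differing by at most~$1$ along edges, and both use accessibility to single out the distance labelling. The paper compresses the labelling--orientation correspondence into the phrase ``easily seen to be a bijection'', whereas you make explicit the step it hides: condition~(iii) at every inner face is equivalent to the closedness of the $1$-form~$\omega$ around face boundaries, and since face boundaries generate the cycle space of a planar map, $\omega$ is exact and a potential~$D$ exists. Your proof is thus a more detailed version of the paper's, not a different strategy.
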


\begin{proof}
We only need to prove the case $d=0$ of this statement (since the case $d>0$ is proved in Section~\ref{sec:proof}). Let $M$ be a plane map of outer degree 0. Define a \emph{vertex-labelling} of $M$ as a labelling of its inner vertices by values in $\mathbb{Z}$ such that the difference of label between two adjacent vertices is at most $1$ (in absolute value) and the label of the outer vertex is $0$. To a vertex-labelling of $M$ we associate a weighted biorientation as follows: an edge $\{v,v'\}$ with $\mathrm{label}(v')=\mathrm{label}(v)$ is oriented $0$-way with weight $-1$ on each half-edge, while an edge $\{v,v'\}$ with $\mathrm{label}(v')=\mathrm{label}(v)+1$ is oriented $1$-way from $v$ to $v'$ with weight $-2$ on the outgoing half-edge and 0 on the ingoing half-edge. This mapping is easily seen to be a bijection between the vertex-labellings and the $0/(-\!2)$-orientations of $M$. Moreover, a $0/(-\!2)$-orientation is accessible if and only if each inner vertex has a neighbor of smaller label in the associated vertex-labelling. Furthermore the unique vertex-labelling such that each inner vertex has a neighbor of smaller label is the \emph{distance labelling} (where each vertex is labelled by its root-distance $D(v)$). Lastly the geodesic biorientation is minimal (it is even acyclic), hence suitable. Thus the geodesic biorientation is the unique suitable $0/(-\!2)$-orientation of $M$.
\end{proof}

We now consider the specialization of the master bijection to suitably $0/(-\!2)$-oriented maps. 
It is proved in \cite{BeFu10}
that the master bijection $\Phi$ as described in Definition~\ref{def:master-bijections} gives a bijection between plane maps of outer degree 0 endowed with a suitable weighted biorientation, and the weighted mobiles of excess 0 (the parameter correspondence is indicated in Figure~\ref{fig:parameter-correspondence}). 
We can now reformulate the definition of $d$-branching mobiles so as to include $d=0$: the definition is unchanged except that half-edges are required to have weight in $d\cup \{1,2,3\ldots\}$ if they are incident to white vertices and in $\{-2,-1,0\}\backslash \{d\}$ if they are incident to black vertices (thus the definition is unchanged for $d>0$). The above discussion (and the easy fact that 0-branching mobiles have excess 0) implies the following result.
\begin{prop}
Theorem~\ref{thm:girthd} holds for all $d\geq 0$, that is, plane maps of outer degree~$d$ and girth at least~$d$ are in bijection with $d$-branching mobiles.
\end{prop}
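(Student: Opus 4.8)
The plan is to reproduce, at $d=0$, the very same two-step argument that yields Theorem~\ref{thm:girthd} from Theorem~\ref{thm:gen} and the master bijection $\Phi$. Since the assertion for $d\geq 1$ is literally Theorem~\ref{thm:girthd}, I would only need to treat the case $d=0$, combining the $d\geq 0$ versions of the canonical orientation and of the master bijection that have just been set up.

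First I would invoke the preceding Proposition, i.e.\ the $d\geq 0$ extension of Theorem~\ref{thm:gen}: every plane map of outer degree~$0$ (and all such maps trivially have girth at least~$0$) carries a unique suitable $0/(-\!2)$-orientation, namely the geodesic biorientation. This identifies the class of plane maps of outer degree~$0$ with the class of suitably $0/(-\!2)$-oriented plane maps of outer degree~$0$. Next I would apply the master bijection in its outer-degree-$0$ form recalled just above, after~\cite{BeFu10}: $\Phi$ is a bijection between plane maps of outer degree~$0$ equipped with a suitable weighted biorientation and weighted mobiles of excess~$0$. Exactly as in the bipartite and general cases of Section~\ref{sec:bij_girth}, I would read off from the parameter-correspondence of Figure~\ref{fig:parameter-correspondence} that Conditions (i), (ii), (iii) defining the $0/(-\!2)$-orientations are transported to Conditions (i), (ii), (iii) defining the $0$-branching mobiles (inner edge, inner vertex, and inner face weights matching edge, white-vertex, and black-vertex weights respectively), and that the weight ranges imposed on ingoing and outgoing half-edges match those imposed on half-edges incident to white and black vertices. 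Hence $\Phi$ restricts to a bijection between suitably $0/(-\!2)$-oriented plane maps of outer degree~$0$ and $0$-branching mobiles of excess~$0$.

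Finally I would discharge the excess constraint: specializing to $d=0$ the counting argument of Claim~\ref{claim:excess_gen} (sum Condition (iii) over the black vertices, eliminate the total black half-edge weight using Conditions (i)--(ii), and use the tree relation $v_b+v_w=e+1$) shows that every $0$-branching mobile automatically has excess~$0$. Composing the two bijections then gives the desired bijection between plane maps of outer degree~$0$ and $0$-branching mobiles, completing the proof.

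I expect no genuine obstacle here, since all the substantive content has already been isolated: on the orientation side in the preceding Proposition (the geodesic biorientation is the unique suitable one), and on the combinatorial side in the extension of $\Phi$ to the excess-$0$ regime. The only point demanding care is the bookkeeping of the shifted weight ranges at $d=0$ (ingoing half-edges now allowed weight~$0$, outgoing half-edges forbidden weight~$0$, in contrast with the $\ZZ$-biorientation convention used for $d\geq 1$), which must be checked to correspond correctly on both sides of $\Phi$; but this is a routine verification against Figure~\ref{fig:parameter-correspondence} rather than a real difficulty.
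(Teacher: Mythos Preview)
Your proposal is correct and follows essentially the same route as the paper: the paper's justification is precisely the sentence ``The above discussion (and the easy fact that 0-branching mobiles have excess 0) implies the following result,'' where the ``above discussion'' comprises the preceding Proposition (unique suitable $0/(-2)$-orientation via the geodesic biorientation) together with the excess-$0$ case of the master bijection from~\cite{BeFu10}. Your write-up simply makes these steps explicit, including the specialization of the excess computation to $d=0$.
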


\begin{figure}
\begin{center}
\includegraphics[width=\linewidth]{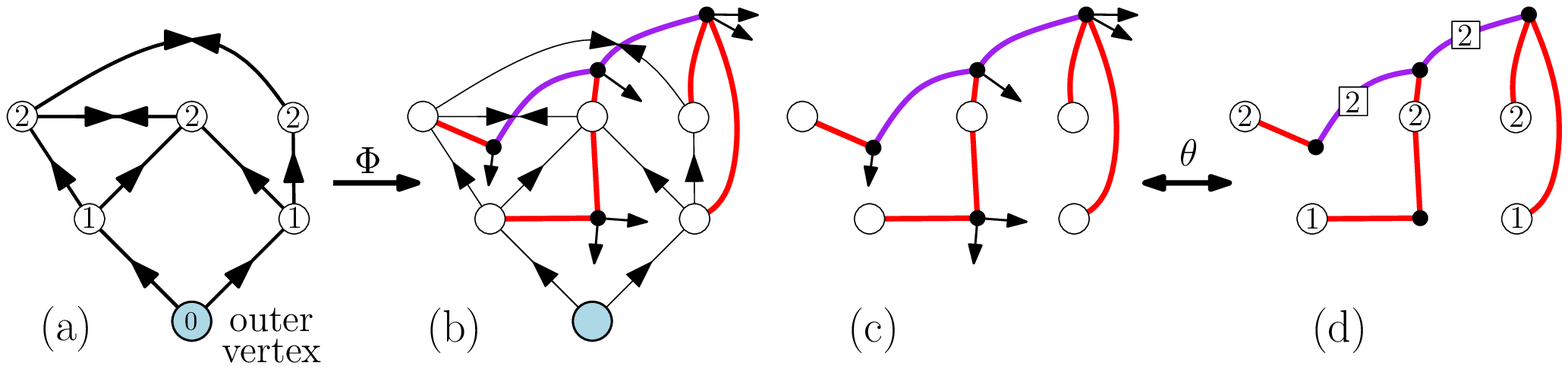}
\end{center}
\caption{
Bijection in the case $d=0$. (a) The suitable $0/(-2)$-orientation (i.e., geodesic biorientation). (b) The master bijection. (c) The 0-branching mobile (without the half-edges weights which are redundant). (d) The corresponding well-labelled mobile.}
\label{fig:bij_d=0}
\end{figure}

We now explain the relation between the case $d=0$ of our bijections and the bijection obtained by Bouttier, Di Francesco and Guitter~\cite{BDFG:mobiles}. 
A \emph{labelled mobile} is a mobile without buds or white-white edges, with a \emph{fake} white vertex added in the middle of each black-black edge, and having an integer label on each white vertex which is positive on non-fake white vertices and nonnegative on fake white vertices. 
For a corner $c$ incident to a black vertex, the \emph{jump} $\delta(c)$ is obtained from the labels $\ell,\ell'$ of the white vertices $v,v'$ preceding and following $c$
in clockwise order around the mobile by: $\delta(c)=\ell-\ell'$ if $v'$ is fake and $\delta(c)=\ell-\ell'+1$ otherwise. A \emph{well-labeled mobile} is a labelled mobile such that every jump is non-negative, and there is a non-fake white vertex of label 1 or a fake white vertex of label 0; an example is shown in Figure~\ref{fig:bij_d=0}(d). In~\cite{BDFG:mobiles} it was shown that plane maps of outer degree 0 are in bijection with well-labelled mobiles. The bijection can be described as follows: given a plane map $M$ of outer degree 0, one first endows $M$ with its geodesic biorientation (i.e., its suitable $0/(\!-\!2)$-orientation), and then draws the mobile in the same way as the master bijection $\Phi$, but forgets the buds and instead records the root-distance of each vertex and add a fake white vertex with label $\ell$ on each black-black edge of the mobile corresponding to a 0-way edge of $M$ between vertices both at root-distance~$\ell$.


It remains to explain the relation between well-labeled mobiles and 0-branching mobiles. Observe first that the weights are redundant for $0/(\!-\!2)$-orientations and $0$-branching mobiles. In particular, 0-branching mobiles identify with unweighted mobiles without white-white edges such that every black vertex has as many buds as white neighbors. Now, given a well-labelled mobile $L$, one obtains a 0-branching mobile $\theta(L)$ by adding $\delta(c)$ buds in each corner $c$ incident to a black vertex (and forgetting the labels and fake white vertices); see Figure~\ref{fig:bij_d=0}(c)--(d). The mapping $\theta$ is clearly a bijection. Moreover, if $L$ is the image of a plane map $M$ through the bijection described in~\cite{BDFG:mobiles}, then $\theta(L)$ is the image of $M$ through the master bijection $\Phi$. To summarize, well-labelled mobiles can be identified with $0$-branching mobiles and the bijection described in~\cite{BDFG:mobiles} coincides with the case $d=0$ of our bijection up to this identification.

\section{Proofs}\label{sec:proof}
In this section we prove Theorems~\ref{thm:bip_annular} and~\ref{thm:gen_annular} (which extend Theorems~\ref{thm:bip} and~\ref{thm:gen}) about \bbm-orientations and \ddm-orientations. In Subsection~\ref{sec:necessary} we prove that the conditions on girth are necessary to admit a \bbm-orientation or a \ddm-orientation. In Subsection~\ref{sec:proof_bip} we prove that for $b\geq 2$, any bipartite annular map $A$ in $\cA\rsb$ admits a unique suitable \bbm-orientation. In Subsection~\ref{sec:proof_bip} we prove that for $d\geq 2$, any annular map $A$ in $\cA\pqd$ admits a unique suitable \ddm-orientation. Lastly, in Subsection~\ref{sec:proofb=1} we treat the cases $b=1$ and $d=1$.

\subsection{Necessity of the conditions on cycle lengths}\label{sec:necessary}
\begin{lem}\label{lem:necd}
Let $d,p,q$ be positive integers with $p\leq q$. Let $M$ be an annular map of type $(p,q)$. If $M$ admits a \ddm-orientation, then $M$ is in $\cA\pqd$, that is, separating cycles have length at least $p$, and non-separating cycles have length at least~$d$.
\end{lem}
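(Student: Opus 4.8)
The plan is to bound an arbitrary cycle by a single weighted Euler-type count over the region it encloses, handling the separating and non-separating cases by the same computation and distinguishing them only through the inner root-face.

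First I would fix a cycle $C$ of length $\ell$ and let $R$ be the region it bounds on the side \emph{not} containing the outer root-face (the bounded side, since the outer root-face is the infinite face). Let $n$, $m$, $k$ be the numbers of vertices, edges and faces strictly inside $R$, so that $C$ carries $\ell$ further vertices and $\ell$ further edges. Because $M$ is connected, any interior vertex reaches $C$ by a path that cannot cross $C$ except at a vertex, so the submap carried by $\overline R$ is connected; Euler's relation for it reads $n-m+k=1$. All vertices and edges strictly inside $R$ are inner, and all faces strictly inside $R$ are inner faces, with the sole exception that in the separating case one interior face is the inner root-face $f^*$, satisfying condition (iv), i.e.\ $\deg(f^*)+\weight(f^*)=p$, instead of condition (iii).

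Next I would set up the count. Summing condition (iii) over the set $F$ of interior faces gives $(2m+\ell)+\sum_{f\in F}\weight(f)=dk$ in the non-separating case and $(2m+\ell)+\sum_{f\in F}\weight(f)=d(k-1)+p$ in the separating case, using $\sum_{f\in F}\deg(f)=2m+\ell$ (each interior edge is counted twice, each edge of $C$ once). The key step is a \emph{local} version of the global weight identity $\sum_e\weight(e)=\sum_v\weight(v)+\sum_f\weight(f)$: restricting to $R$, every interior edge still contributes both of its half-edge weights exactly once, split between the vertex-sum (its ingoing half-edges at interior vertices) and the face-sum (its outgoing half-edges, whose right face is automatically interior), and the only leftover terms are localized on $C$. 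Writing $I^C\ge 0$ for the total weight of the ingoing half-edges of interior edges that meet $C$, and $O^C\le 0$ for the total weight of the outgoing half-edges of edges of $C$ whose right face is interior, this reads
\[
\sum_{v\ \mathrm{in}\ R}\weight(v)+\sum_{f\in F}\weight(f)=\sum_{e\ \mathrm{in}\ R}\weight(e)-I^C+O^C.
\]
Plugging in $\sum_{v\ \mathrm{in}\ R}\weight(v)=dn$ (condition (ii)) and $\sum_{e\ \mathrm{in}\ R}\weight(e)=(d-2)m$ (condition (i)) yields $\sum_{f\in F}\weight(f)=(d-2)m-dn-I^C+O^C$, and substituting this into the face identity above and simplifying with $n-m+k=1$ collapses everything to
\[
\ell=d+I^C-O^C\ \ (\text{non-separating}),\qquad \ell=p+I^C-O^C\ \ (\text{separating}).
\]

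Finally, in a $\mathbb Z$-biorientation ingoing half-edges carry positive weights and outgoing half-edges non-positive ones, so $I^C\ge 0$ and $O^C\le 0$, hence $I^C-O^C\ge 0$. This gives $\ell\ge d$ for non-separating cycles and $\ell\ge p$ for separating cycles, which is precisely the statement $M\in\cA\pqd$ (the bound $\ell\ge p$ being sharp, attained by the outer-face contour). I expect the main obstacle to be the middle step: one must check carefully that, once conditions (i)--(iii) are combined, the weight of every interior half-edge is counted exactly once, so that the only surviving quantities are the two sign-definite boundary sums $I^C$ and $O^C$. Once this bookkeeping is pinned down, the conclusion is immediate from the sign convention of the $\mathbb Z$-biorientation.
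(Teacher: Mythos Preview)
Your argument is correct and is essentially the same Euler-count as in the paper; the only difference is cosmetic. The paper lets $S=\sum_{f\in F}\weight(f)$ and writes the single inequality $d\cdot(\#\text{interior vertices})\le (d-2)\cdot(\#\text{interior edges})-S$, whereas you split the slack in that inequality into the two boundary terms $I^C\ge 0$ and $-O^C\ge 0$ and obtain the exact identity $\ell=d+I^C-O^C$ (resp.\ $\ell=p+I^C-O^C$). Both routes combine Conditions (i)--(iv) with the Euler relation for the region enclosed by $C$ and use only the sign convention on $\ZZ$-biorientations; your bookkeeping of which half-edge weights are counted where is accurate.
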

\begin{proof}
Let $M$ be an annular map of type $(p,q)$ admitting a \ddm-orientation. Let $C$ be a simple cycle of $M$, and let $\ell$ be its length. Let $v$, $e$, and $n$ be the numbers of vertices, edges, and faces strictly inside of $C$. Let $S$ be the sum of weights of the outgoing half-edges having a face inside $C$ on their right. Clearly Conditions (i) and (ii) of \ddm-orientations imply $d\,v\leq (d-2)e-S$. 
Suppose first that the cycle $C$ is non-separating. In this case, Condition (iii) of \ddm-orientations gives $d\,n=S+\sum_f\deg(f)$, where the sum is over the faces strictly inside $C$. Since $\sum_f\deg(f)=2e+\ell$, we get $S=d\,n-2e-\ell$. Thus, $d\,v\leq (d-2)e-d\,n+2e+\ell$, that is, $\ell\geq d(v-e+n)=d$, where the last equation is the Euler relation. This proves that non-separating cycles have length at least $d$. 
Suppose now that the cycle $C$ is separating. One still has $d\,v\leq (d-2)e-S$ and $\sum_f\deg(f)=2e+\ell$,
but Condition (iv) gives $d\,n+p-d=S+\sum_f\deg(f)$, hence $S=d\,n+p-d-2e-\ell$. Thus $d\,v\leq (d-2)e-d\,n+d-p+2e+\ell$, and $\ell\geq d(v-e+n)+p-d=p$. This proves that separating cycles have length at least $p$. 
\end{proof}

\begin{cor}\label{cor:necess}
Let $b$, $r$, $s$ be positive integers such that $r\leq s$.
Let $M$ be a bipartite annular map of type $(2r,2s)$.
If $M$ admits a \bbm-orientation, then $M$ is in $\cA\rsb$.
\end{cor}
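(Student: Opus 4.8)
The plan is to deduce Corollary~\ref{cor:necess} directly from Lemma~\ref{lem:necd} via the weight-doubling operation already recorded in Remark~\ref{rk:bip-specialization-annular}. Let $M$ be a bipartite annular map of type $(2r,2s)$ carrying a \bbm-orientation $O$. I would build a new weighted biorientation $O'$ of $M$ by keeping every half-edge orientation unchanged, keeping the weights of the half-edges on the contour of the outer root-face unchanged (these must remain $0$ and $1$ for admissibility), and doubling the weight of every remaining (inner) half-edge. The claim to establish is that $O'$ is a \ddm-orientation of $M$, now viewed as an annular map of type $(p,q)=(2r,2s)$ with $d=2b$. Once this is done, Lemma~\ref{lem:necd} applied with $d=2b$, $p=2r$, $q=2s$ yields $M\in\cA_{2b}^{(2r,2s)}=\cA\rsb$, which is exactly the assertion.

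To verify that $O'$ is a \ddm-orientation I would run through Definition~\ref{def:ddmorient_annular}. Admissibility is inherited from $O$, since neither the orientations nor the boundary weights are altered. An outgoing half-edge of $O$ has weight $0$ or $-1$, so after doubling (or being left alone on the boundary) its weight lies in $\{0,-1,-2\}$. Condition~(i) is immediate: an inner edge has both half-edges doubled, so its weight passes from $b-1$ to $2b-2=d-2$. Condition~(ii) is equally clean: an inner vertex is not incident to the outer face, so all its ingoing half-edges are inner and get doubled, and its weight passes from $b$ to $2b=d$. For conditions~(iii) and~(iv) I would use that $\deg(f)$ is untouched while $\weight(f)$ is doubled, so $\deg(f)/2+\weight(f)=b$ becomes $\deg(f)+\weight(f)=2b=d$, and the inner root-face weight $r-s$ becomes $2(r-s)=2r-2s=p-q$.

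The only point requiring genuine care — and the place I would focus the argument — is condition~(iii) (and~(iv)) for non-root faces that touch the contour of the outer root-face, since for those faces the face-weight receives contributions from boundary half-edges whose weights I deliberately did not double. This turns out to be harmless: each such boundary edge is $1$-way by admissibility, so the half-edge having $f$ on its right either is outgoing with weight $0$, or is ingoing and hence not counted in $\weight(f)$ at all; in both cases its contribution is $0$ both before and after the operation. Consequently the boundary half-edges contribute $0$ to $\weight(f)$ throughout, and $\weight(f)$ is exactly doubled by doubling the inner half-edges, which is precisely what makes the rescaling in conditions~(iii)--(iv) go through. With these checks in place the corollary follows from Lemma~\ref{lem:necd}.
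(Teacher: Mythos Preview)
Your proof is correct and follows exactly the paper's own approach: double the inner half-edge weights to turn the \bbm-orientation into a $2b/(2b\!-\!2)$-orientation and then invoke Lemma~\ref{lem:necd}. The paper dispatches this in a single sentence, whereas you have usefully spelled out the one nontrivial verification (that outer half-edges contribute nothing to $\weight(f)$ for any inner face $f$, so doubling only the inner weights really does double each face weight); your citation of Remark~\ref{rk:bip-specialization-annular} is slightly misleading since that remark concerns \emph{suitable} orientations and already assumes $M\in\cA\rsb$, but you do not actually rely on it, so there is no circularity.
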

\begin{proof}
If $M$ admits a \bbm-orientation, then doubling the weights of inner half-edges gives a \ddm-orientation for $d=2b$, so $M$ is in $\cA\rsb$ by Lemma~\ref{lem:necd}.
\end{proof}

\subsection{Existence and uniqueness of suitable \bbm-orientations for $b\geq 2$}\label{sec:proof_bip}
In this subsection we prove Theorem~\ref{thm:bip_annular} for $b\geq 2$. This is done in three steps which are represented in Figure~\ref{fig:compute_annular}. First we prove the existence of \bbm-orientations for annular $2b$-angulations in $\cA\rrb$ (Proposition~\ref{prop:existsForArb}). Then, for a bipartite map $M$ in $\cA\rsb$, we obtain the existence of certain orientations, called \emph{coherent regular orientations}, on a related map denoted $\MQ$ (Proposition~\ref{prop:exists_regular_ori}). Lastly, we use the coherent orientations of $\MQ$ in order to establish the existence and uniqueness of a suitable \bbm-orientation for $M$ (Proposition~\ref{prop:transfer-to-bbm}).

We start with some definitions and preliminary results. 
Let $M$ be a map, let $\alpha$ be a function from the vertex set to $\NN=\{0,1,\ldots\}$ and let $\beta$ be a function from the edge set to $\NN$. An $\alpha/\beta$\emph{-orientation} of $M$ is an \nb-biorientation such that any vertex $v$ has weight $\alpha(v)$, and any edge $e$ has weight $\beta(e)$. We now recall a criterion given in \cite{BeFu10} for the existence of an $\alpha/\beta$-orientation.

\begin{lem}\label{lem:exists-alpha}
Let $M$ be a map with vertex set $V$ and edge set $E$, let $\alpha$ be a function from $V$ to $\NN$, and let $\beta$ be a function from $E$ to $\NN$. The map $M$ admits an $\alpha/\beta$-orientation if and only if
\begin{enumerate}
\item[(i)] $\sum_{v\in V}\alpha(v)=\sum_{e\in E}\beta(e)$,
\item[(ii)] for each subset $S$ of vertices, $\sum_{v\in S}\alpha(v)\geq \sum_{e\in E_S}\beta(e)$ where $E_S$ is the set of edges with both ends in $S$.
\end{enumerate}
Moreover, $\alpha$-orientations are accessible from a vertex $u$ if and only if
\begin{enumerate}
\item[(iii)] for each subset $S\neq\emptyset$ of vertices not containing $u$, $\sum_{v\in S}\alpha(v)> \sum_{e\in E_S}\be(e)$.
\end{enumerate}
\end{lem}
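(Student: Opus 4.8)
The plan is to recognize an $\alpha/\beta$-orientation as an integer \emph{transportation problem} and to attack it with the max-flow/min-cut theorem. The key reformulation is that choosing an $\mathbb{N}$-biorientation with prescribed edge- and vertex-weights amounts to assigning, to each incidence between an edge $e=\{u,v\}$ and an endpoint, nonnegative integers $w_e^u,w_e^v$ with $w_e^u+w_e^v=\beta(e)$ and $\sum_{e\ni v}w_e^v=\alpha(v)$ for every vertex $v$; one then declares the half-edge of $e$ at $v$ ingoing exactly when $w_e^v>0$ and outgoing (of weight $0$) otherwise, which is precisely the definition of an $\mathbb{N}$-biorientation. Thus an $\alpha/\beta$-orientation exists if and only if one can ship a supply $\beta(e)$ sitting at each edge to its two endpoints so that each vertex $v$ receives exactly its demand $\alpha(v)$.

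For the existence statement I would build the network with a source $s$, a sink $t$, one node per edge and one per vertex, arcs $s\to e$ of capacity $\beta(e)$, arcs $e\to v$ of capacity $+\infty$ for each endpoint $v$ of $e$, and arcs $v\to t$ of capacity $\alpha(v)$. A full shipment corresponds to an $s$--$t$ flow of value $\sum_e\beta(e)$ saturating every source arc; since $\sum_e\beta(e)=\sum_v\alpha(v)$ by (i), such a flow automatically saturates every sink arc and hence delivers exactly $\alpha(v)$ at each vertex. By max-flow/min-cut it then suffices to show every cut has capacity at least $\sum_e\beta(e)$. Because the arcs $e\to v$ are infinite, a finite cut $(A,B)$ can contain an edge-node in $A$ only if both its endpoint-nodes lie in $A$; writing $S$ for the set of vertex-nodes of $A$ and optimizing, the best cut keeps all of $E_S$ in $A$ and has capacity $\sum_{e\notin E_S}\beta(e)+\sum_{v\in S}\alpha(v)=\sum_{e\in E}\beta(e)+\left(\sum_{v\in S}\alpha(v)-\sum_{e\in E_S}\beta(e)\right)$. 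This is $\ge\sum_{e\in E}\beta(e)$ for every $S$ precisely when (ii) holds, and the flow integrality theorem yields integer half-edge weights. Necessity of (i) and (ii) is immediate (take $S=V$ for (i), and count the weight internal to $S$ for (ii)).

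For accessibility I would show, assuming existence, that (iii) is equivalent to every $\alpha/\beta$-orientation being accessible from $u$. For the ``if'' direction, take any $\alpha/\beta$-orientation and let $S$ be the set of vertices not reachable from $u$ by a directed path; then $u\notin S$, and no boundary half-edge can be ingoing toward $S$ (else its $S$-endpoint would be reachable), so all weight received by $S$ comes from $E_S$, forcing $\sum_{v\in S}\alpha(v)=\sum_{e\in E_S}\beta(e)$, which (iii) forbids unless $S=\emptyset$. For the ``only if'' direction I would argue the contrapositive: if (iii) fails then, by (ii), some nonempty $S\not\ni u$ satisfies $\sum_{v\in S}\alpha(v)=\sum_{e\in E_S}\beta(e)$, and I would construct a non-accessible orientation by (a) realizing $\alpha$ on $S$ using only the edges of $E_S$ (feasible by the existence criterion applied to the induced submap), (b) sending every boundary edge entirely to its endpoint outside $S$, and (c) realizing the remaining demands on $V\setminus S$. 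Such an orientation has no edge entering $S$, hence is not accessible from $u$.

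The technical heart of the argument is the min-cut computation, where the role of the infinite-capacity arcs is to force every finite cut to be indexed by a vertex subset $S$; and, in the accessibility ``only if'' direction, the feasibility of step (c), which is the main obstacle. That feasibility reduces cleanly to condition (ii): for each $T\subseteq V\setminus S$ the supply forced to land in $T$ equals $\sum_{e\in E_{T\cup S}}\beta(e)-\sum_{e\in E_S}\beta(e)$, so the required inequality $\sum_{v\in T}\alpha(v)\ge\sum_{e\in E_T}\beta(e)+(\text{boundary supply into }T)$ is exactly (ii) applied to $T\cup S$ (after using the equality on $S$). Finally, this reasoning uses nothing about planarity and works verbatim for multigraphs with loops (a loop at $v$ lies in $E_S$ exactly when $v\in S$), so it holds in the full generality needed in the sequel.
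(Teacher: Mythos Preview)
Your argument is correct. Note, however, that the paper does not actually prove this lemma: it is introduced with the words ``We now recall a criterion given in~[BeFu10]'' and is simply cited from the authors' earlier work. So there is no in-paper proof to compare against.

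That said, your max-flow/min-cut reduction is precisely the standard route to this kind of degree-constrained orientation criterion (it is a weighted form of Hakimi's theorem), and the details are handled cleanly. In particular, your treatment of the ``only if'' direction for accessibility --- the one genuinely delicate step --- is right: the feasibility of step~(c) does reduce to the global condition~(ii) applied to $T\cup S$, and the nonnegativity of the residual demands $\alpha'(v)$ is covered by the singleton case $T=\{v\}$. The equality $\sum_{v\in S}\alpha(v)=\sum_{e\in E_S}\beta(e)$ in the ``if'' direction is indeed an equality (not merely an inequality), since in an $\mathbb{N}$-biorientation outgoing half-edges carry weight~$0$, so the total weight landing in $S$ is exactly $\sum_{e\in E_S}\beta(e)$.
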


For $b,r$ positive integers, we denote by $\cB_r^{(b)}$ the set of bipartite maps in $\cA\rrb$ such that every non-root face has degree $2b$ (in particular $\cB_b^{(b)}$ is the set of $2b$-angulations of girth $2b$).
\begin{prop}\label{prop:existsForArb}
For any positive integers $b,r$ with $b\ge 2$, every map $M$ in $\cB_r^{(b)}$ admits a \bbm-orientation, and the \bbm-orientations of $M$ are accessible from every outer vertex.
\end{prop}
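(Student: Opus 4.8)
The plan is to deduce the statement from the flow criterion of Lemma~\ref{lem:exists-alpha} for $\alpha/\beta$-orientations. First I would note that for $M\in\cB_r^{(b)}$ every \bbm-orientation is in fact an \nb-biorientation: since all non-root faces have degree $2b$ and the inner root-face has weight $r-r=0$, Conditions~(iii)--(iv) of Definition~\ref{def:bbm-orientation_annular} force every inner face to have weight $0$, and as each outgoing half-edge carries weight $0$ or $-1$ and lies to the right of a single face (an inner face, or the outer face along which admissibility already imposes weight $0$), all outgoing half-edges have weight~$0$. Thus a \bbm-orientation of $M$ is precisely an \nb-biorientation in which each inner vertex has weight $b$, each inner edge has weight $b-1$, the outer boundary is the prescribed admissible directed cycle, and all inner half-edges at outer vertices are outgoing.

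Next I would encode this as an $\alpha/\beta$-orientation. Let $M^*$ be obtained from $M$ by contracting the outer boundary into a single vertex $v_0$ and discarding the loops arising from boundary edges. Then \bbm-orientations of $M$ correspond bijectively to $\alpha/\beta$-orientations of $M^*$ with $\beta\equiv b-1$, $\alpha\equiv b$ on the inner vertices, and $\alpha(v_0)=0$ (every half-edge at $v_0$ being outgoing). Condition~(i) of Lemma~\ref{lem:exists-alpha}, namely $bV=(b-1)E$ with $V,E$ the numbers of inner vertices and inner edges, follows from Euler's relation together with the face-degree identity $2r+2r+2b\cdot(\#\,\text{non-root faces})=2\cdot(\#\,\text{edges})$; this is a short computation.

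The heart of the matter is Condition~(ii), together with its strict form~(iii) needed for accessibility. For a vertex set $S$ I would bound $|E_S|$ through Euler's relation applied to the subgraph of $M$ induced on $S$ when $v_0\notin S$, and on $S'\cup O$ when $S=\{v_0\}\cup S'$, where $O$ denotes the set of all outer vertices (so that this induced subgraph always contains the entire outer boundary). The essential bookkeeping is that, in such a subgraph, any face containing neither root-face of $M$ encloses only non-root faces and hence has degree at least $2b$ by the non-separating girth, whereas each of the at most two faces that do contain a root-face is enclosed by a separating cycle and thus has degree at least $2r$ by the separating girth. Substituting ``degree $\ge 2b$, except for the at most two root-face-carrying faces of degree $\ge 2r$'' into $2|E|=\sum_f\deg(f)$ and eliminating the face-count via Euler gives $(b-1)|E_S|<b|S|$ when $v_0\notin S$ (the strict inequality yielding accessibility), and $(b-1)|E_S|\le b|S'|$ when $v_0\in S$, the two $2r$ terms being cancelled by the boundary. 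I expect the main obstacle to be this careful isolation of the root-face-carrying faces and the verification that the separating-girth bound governs them; this is exactly the point at which the annular hypothesis (separating girth $2r$) is used.

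Finally, Condition~(iii) with $u=v_0$ shows that every $\alpha/\beta$-orientation of $M^*$ is accessible from $v_0$. Since the admissible outer boundary is a directed cycle, all outer vertices of $M$ are mutually reachable, so accessibility from $v_0$ in $M^*$ amounts to accessibility of every inner vertex from every outer vertex of $M$. Together these give both the existence of a \bbm-orientation of $M$ and the accessibility of every \bbm-orientation from every outer vertex.
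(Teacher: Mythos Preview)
Your proposal is correct and follows essentially the same route as the paper: reduce \bbm-orientations of $M\in\cB_r^{(b)}$ to $\alpha/\beta$-orientations (using that all outgoing weights vanish), then verify the conditions of Lemma~\ref{lem:exists-alpha} via Euler's relation and the girth hypotheses. The one substantive difference is that you first contract the outer boundary to a single vertex $v_0$ with $\alpha(v_0)=0$, whereas the paper works directly on $M$ with $\alpha=1$ on outer vertices and $\beta=1$ on outer edges; your contraction has the pleasant effect of collapsing the paper's three cases (all / some / no outer vertices in $S$) into two ($v_0\in S$ or $v_0\notin S$), the paper's ``some outer vertices'' case being absorbed automatically. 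One small point you should make explicit: for $b\ge 2$ a map in $\cB_r^{(b)}$ cannot have an inner edge joining two outer vertices (such a chord would force a non-separating cycle of length~$2$), so no unwanted loops at $v_0$ arise in $M^*$; and in the case $v_0\notin S$ the face of $M_S$ that happens to contain \emph{both} root-faces has non-separating boundary, hence degree $\ge 2b$ rather than merely $\ge 2r$---the paper handles this by writing $2\min(r,b)$ in the key estimate.
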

\begin{proof}
Let $M$ be in $\cB_r^{(b)}$. Note that the \bbm-orientations of $M$ do not have half-edges with negative weight (because of Condition (iii) of \bbm-orientations). Hence, the \bbm-orientations of $M$ are exactly the $\alpha/\beta$-orientations, where $\alpha(v)=b$ for inner vertices, $\alpha(v)=1$ for outer vertices, $\beta(e)=(b-1)$ for inner edges, and $\beta(e)=1$ for outer edges. We will now use Lemma~\ref{lem:exists-alpha} to prove the existence of \bbm-orientations of $M$.

Let us check Condition~(i) first. Let $\vv$, $\ee$ and $\ff$ be the numbers of
vertices, edges, and faces of $M$. Two faces of $M$ have degree $2r$ and all the other faces have degree $2b$, hence $2\ee=2b(\ff-2)+4r$. Combining this with the Euler relation gives $b\vv=(b-1)\ee+2r$. This can be rewritten as $b(\vv-2r)+2r=(b-1)(\ee-2r)+2r$, so Condition~(i) holds.
Now we check Conditions~(ii) and~(iii). Note that is it enough to check these conditions on \emph{connected subsets} $S$ (subsets such that the graph induced by $S$ is connected). Indeed both quantities $\sum_{v\in S}\alpha(v)$ and $\sum_{e\in E_S}\beta(e)$ are additive over
non-adjacent connected subsets.
So we consider a subset $S$ of vertices of $M$ forming a connected submap, which we denote by $M_S$. Let $E_S$ and $F_S$ be respectively the sets of edges and faces of $M_S$, and let $\vv_S=|S|$, $\ee_S=|E_S|$ and $\ff_S=|F_S|$. We treat three cases.

 Assume first that $S$ contains all the outer vertices of $M$.
Since the separating girth is $2r$, the inner face of $M_S$ containing the inner root-face
of $M$ has degree at least $2r$. In addition the outer face of $M_S$ has degree $2r$, and all the other
faces have degree at least $2b$ (since the non-separating girth is at least $2b$).
Hence $2\ee_S\geq 2b(\ff_S-2)+4r$, which together with the Euler relation gives $b \vv_S\geq (b-1)\ee_S+2r$. By definition of \bbm-orientations, we have $\sum_{v\in S}\alpha(v)=b(\vv_S-2r)+2r$, and $\sum_{e\in E_S}\beta(e)=(b-1)(\ee_S-2r)+2r$. Hence
$\sum_{v\in S}\alpha(v)-\sum_{e\in E_S}\beta(e)=b \vv_S-(b-1)\ee_S-2r\geq 0$.

Assume now that $S$ contains at least one but not all outer vertices of $M$. Let $A$ be the set
of outer vertices not in $S$, and let $B$ be the set of outer edges not in $E_S$. Note that $|A|<|B|$.
 Let $S'=S\cup A$. The submap $M_{S'}$ contains all outer vertices (case already treated), hence
$\sum_{v\in S'}\alpha(v)-\sum_{e\in E_S'}\beta(e)\geq 0$. Moreover $\sum_{v\in S'}\alpha(v)=|A|+\sum_{v\in S}\alpha(v)$ and $\sum_{e\in E_{S'}}\beta(e)\geq |B|+\sum_{e\in E_S}\beta(e)$. Thus,
$$
\sum_{v\in S}\alpha(v)-\sum_{e\in E_S}\beta(e)\geq \Big(\sum_{v\in S'}\alpha(v)-\sum_{e\in E_{S'}}\beta(e)\Big)+|B|-|A|>0.
$$

Assume now that $S$ contains no outer vertex. In this case, $\sum_{v\in S}\alpha(v)=b\vv_S$ and $\sum_{e\in E_S}\beta(e)=(b-1)\ee_S$. Note that the contours of at most two faces of $M_S$ separate the two marked faces,
such faces have degree at least $2r$ (since the separating girth is at least $2r$) and
the other faces have degree at least $2b$ (since the non-separating girth is at least $2b$).
Hence $\ee_S\geq b(\ff_S-2)+2\ \!\mathrm{min}(r,b)$, which together
with the Euler relation gives $b\vv_S\geq (b-1)\ee_S+2\ \!\mathrm{min}(r,b)$, so $\sum_{v\in S}\alpha(v)>\sum_{e\in E_S}\beta(e)$.

Hence in all three cases, Condition~(ii) holds.
Note that the only case where the inequality $\sum_{v\in S}\alpha(v)\geq\sum_{e\in E_S}\beta(e)$ can be tight is if $S$ contains all the outer vertices of $M$. Hence Condition (iii) also holds.
\end{proof}

We now fix positive integers $b,r,s$ with $2\leq b$, $r\leq s$, and a bipartite map $M$ in $\cA\rsb$, and we prove the existence of a unique suitable \bbm-orientation of $M$ (thereby completing the proof of Theorem~\ref{thm:bip_annular} for $b\geq 2$). In order to prove this result, we consider some orientations on a related map denoted $\MQ$.
\fig{width=11cm}{M-Q-MQ}{An annular map $M$ (a), its inner-quadrangulation $Q$ (b), and the associated map $\MQ$ (c). The edge $e'$ is the $M$-edge of the edge $e$ of $Q$.}

We define the \emph{inner-quadrangulation} of $M$ to be the map $Q$ obtained from $M$ by inserting a vertex $v_f$, called \emph{face-vertex}, in each inner face $f$ of $M$, adding an edge from $v_f$ to each corner around $f$, and finally deleting the edges (but not the vertices) of $M$. An example is given in Figure~\ref{fig:M-Q-MQ}.
 The face-vertex in the inner root-face is called \emph{special}. Inner faces of $Q$ have degree 4 and correspond to the inner edges of $M$. We denote by $\MQ$ the map obtained by superimposing $M$ and~$Q$.

\begin{Def}\label{def:regular}
We call \emph{regular orientation of} $\MQ$, an admissible \nb-biorientation such that
\begin{enumerate}
\item[(i)] inner edges of $M$ have weight $b-1$, edges of $Q$ have weight~$1$,
\item[(ii)] inner vertices of $M$ have weight $b$,
\item[(iii)] any non-special face-vertex $v$ has a weight and degree satisfying $w(v)-\deg(v)/2=b$,
\item[(iv)] the special face-vertex $v$ has a weight and degree satisfying \mbox{$w(v)-\deg(v)/2=r$.}
\end{enumerate}
\end{Def}

We now prove the existence of a regular orientation of $\MQ$. Roughly speaking, we will use Proposition~\ref{prop:existsForArb},
 together with the fact that $M$ can be completed into a map in $\cB_r^{(b)}$, and the following result.
\begin{claim}\label{claim:indegree}
Let $B$ be a map in $\cB_r^{(b)}$ endowed with a \bbm-orientation. Let $C$ be a simple cycle of length $2c$ having only inner vertices. Then, the sum $S$ of weights of the ingoing half-edges incident to $C$ but strictly outside of the region enclosed by $C$ is $r+c$ if $C$ encloses the inner root-face and $b+c$ otherwise.
\end{claim}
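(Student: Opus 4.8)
The plan is to compare the total weight of the vertices of $C$ with the total weight of the edges enclosed by $C$, turning the inequality of Lemma~\ref{lem:necd} into an exact identity. The key enabling observation is that on a map $B$ in $\cB_r^{(b)}$ a \bbm-orientation carries no negative weight: as noted at the start of the proof of Proposition~\ref{prop:existsForArb}, every outgoing half-edge has weight $0$ (condition (iii) forces weight $0$ on the non-root faces of degree $2b$, and since $\cB_r^{(b)}\subseteq\cA\rrb$ we have $s=r$, so condition (iv) forces weight $0$ on the inner root-face as well). Consequently the weight of each edge equals the sum of the weights of its \emph{ingoing} half-edges, which is precisely what makes the following count exact rather than merely a bound.

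First I would fix notation. Let $M'$ be the submap formed by $C$ together with everything strictly inside the region it encloses, and let $\nu$ and $\eta$ be the numbers of vertices and edges of $M'$ lying strictly inside $C$. Every vertex of $M'$ is an inner vertex of $B$, of weight $b$, and every edge of $M'$ is an inner edge, of weight $b-1$ (indeed $C$ has only inner vertices, and the outer root-face lies outside $C$, so no outer edge occurs in $M'$). For a vertex $u$ of $C$, split its weight $b$ as $b=w_{\mathrm{int}}(u)+w_{\mathrm{ext}}(u)$, where $w_{\mathrm{int}}(u)$ gathers the ingoing half-edges at $u$ lying in $M'$ and $w_{\mathrm{ext}}(u)$ those lying strictly outside $C$; by definition $S=\sum_{u\in C}w_{\mathrm{ext}}(u)$. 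Summing the identity ``vertex weight $=$ sum of ingoing half-edge weights'' over all vertices of $M'$, and using that interior vertices have all their incident edges in $M'$, I obtain $b\nu+\sum_{u\in C}w_{\mathrm{int}}(u)=(b-1)(\eta+2c)$, the right-hand side being the total weight of the $\eta+2c$ edges of $M'$. Hence $\sum_{u\in C}w_{\mathrm{int}}(u)=(b-1)(\eta+2c)-b\nu$, and since $\sum_{u\in C}w(u)=2bc$ this gives $S=2c+b\nu-(b-1)\eta$.

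It then remains to eliminate $\nu$ and $\eta$ using the Euler relation for $M'$. The outer face of $M'$ has degree $2c$ (each edge of $C$ borders it exactly once), while the faces strictly inside $C$ all have degree $2b$, with the single exception, when $C$ encloses the inner root-face, of that face of degree $2s=2r$. Combining $\sum_f\deg(f)=2|E(M')|$ with Euler's relation yields $(b-1)\eta=c-b+b\nu$ in the non-separating case and $(b-1)\eta=c-r+b\nu$ in the separating case. Substituting into $S=2c+b\nu-(b-1)\eta$ gives $S=b+c$ and $S=r+c$ respectively, which is the claim.

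The main obstacle is purely conceptual and is dealt with at the outset: one must know that the orientation is an \nb-biorientation, so that edge weights coincide with ingoing-half-edge weights; this is exactly the absence of negative weights recorded in Proposition~\ref{prop:existsForArb}. The only other point needing care is the topological fact that the outer face of $M'$ has degree exactly $2c$, which holds because $C$ is a \emph{simple} cycle so each of its edges separates one interior face from the connected exterior and is therefore counted once. Everything else is routine Euler-relation arithmetic.
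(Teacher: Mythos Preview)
Your proof is correct and follows essentially the same route as the paper's. Both arguments rest on the same two ingredients: (1) the observation that on a map in $\cB_r^{(b)}$ all outgoing half-edges have weight $0$, so that edge weight equals the sum of ingoing-half-edge weights, and (2) a double count matching the total vertex weight inside $C$ against the total edge weight inside $C$ plus $S$, after which the Euler and incidence relations pin down the answer. The only difference is bookkeeping: the paper works with the counts $v,e$ of vertices and edges \emph{including} those on $C$ and writes the identity compactly as $b\,v=S+(b-1)e$, whereas you separate the strictly-interior counts $\nu,\eta$ from the $2c$ boundary vertices and edges; the algebra is the same. You are also more explicit than the paper in justifying step (1) via conditions (iii) and (iv), which is a welcome clarification.
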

\begin{proof}
By the Euler relation and the incidence relation between faces and edges, the numbers $v$ and $e$ of vertices and edges inside of $C$ (including vertices and edges on $C$) satisfy $(b-1)e+b+c=b\,v$ if $C$ encloses the inner root-face (which has degree $2r$) and 
 $(b-1)e+r+c=b\,v$ otherwise. Moreover $b\,v=S+(b-1)e$ since both sides equal the sum of weights of ingoing half-edge incident to vertices inside $C$ (including vertices on $C$). This proves the claim. 
\end{proof}

\begin{prop}\label{prop:exists_regular_ori}
There exists a regular orientation of $\MQ$ which is accessible from every outer vertex of~$M$.
\end{prop}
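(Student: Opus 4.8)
The plan is to transfer an orientation from a $2b$-angulation that completes $M$, exactly along the three ingredients announced in the text. First I would complete $M$ into a map $B\in\cB_r^{(b)}$: each inner face of $M$ other than the inner root-face, of degree $2k$ with $k\ge b$ (since the non-separating girth is $\ge 2b$), is filled by a bipartite $2b$-angulation of the corresponding $2k$-gon; the inner root-face, of degree $2s$, is filled by a $2b$-angulation of the annular region bounded by this $2s$-gon and a new $2r$-gon (possible since $r\le s$); and the outer root-face is left untouched. One checks that such fillings exist and can be chosen so as to create no separating cycle of length $<2r$ nor any non-separating cycle of length $<2b$, so that indeed $B\in\cB_r^{(b)}$. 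By Proposition~\ref{prop:existsForArb}, $B$ then carries a \bbm-orientation $O_B$ accessible from every outer vertex.

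Next I would build the regular orientation of $\MQ$ from $O_B$. The edges of $M$ are edges of $B$, so I keep their $O_B$-orientation; in particular the inner $M$-edges already have weight $b-1$, matching Condition~(i) of Definition~\ref{def:regular}. It remains to orient the edges of $Q$ (each of weight $1$), i.e. to decide for every inner face $f$ and every corner of $f$ the direction of the spoke joining the face-vertex $v_f$ to that corner. The target is fixed by Conditions~(iii)--(iv): a non-special $v_f$ of degree $2k$ must have weight $b+k$, hence exactly $k-b$ of its $2k$ spokes point outward; the special vertex, of degree $2s$, must have weight $r+s$, hence exactly $s-r$ spokes point outward. The consistency of these targets is what Claim~\ref{claim:indegree} delivers: applied to the boundary cycle $C_f=\partial f$ of each inner face (a cycle with only inner vertices), and writing $\sum_{v\in C_f}\weight(v)=2bk$ as the sum of the contribution $2k(b-1)$ of the edges of $C_f$, the in-flow $S_{\mathrm{in}}(f)$ from the edges strictly inside, and the in-flow $S_{\mathrm{out}}(f)$ from the edges strictly outside, Claim~\ref{claim:indegree} evaluates $S_{\mathrm{out}}(f)$ as $b+k$ for a non-root face and $r+s$ for the inner root-face. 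Thus $S_{\mathrm{in}}(f)$ equals $k-b$, respectively $s-r$: the in-flow from the filling into $C_f$ is exactly the number of outward spokes to be created, and the inner-side absorption $S_{\mathrm{out}}(f)$ equals the prescribed weight of $v_f$. (When $\partial f$ is not a simple cycle of inner vertices, the same count is run on the enclosed region.)

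The main obstacle is to turn these matched totals into an actual orientation. One must choose, face by face, which boundary vertices receive the outward spokes so that, summed over all incident faces, each inner $M$-vertex $u$ receives incoming spoke-weight equal to $b$ minus its incoming weight along $M$-edges, while the resulting orientation stays accessible from the outer vertices. This is a balanced degree-constrained (transportation) problem on $\MQ$: a global count shows supply equals demand, and its Hall-type feasibility conditions reduce, by the same region analysis as above, to the girth conditions of~$M$. The cleanest way to discharge both feasibility and accessibility simultaneously is to phrase the construction through Lemma~\ref{lem:exists-alpha}, setting $\alpha(v)=b$ on inner $M$-vertices, $\alpha(v)=1$ on outer vertices, $\alpha(v_f)=b+\deg(v_f)/2$ on non-special face-vertices and $\alpha(v_{f_0})=r+s$ on the special one, and $\beta(e)=b-1$ on inner $M$-edges, $\beta(e)=1$ on the $Q$-edges and the outer $M$-edges. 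Condition~(i) of Lemma~\ref{lem:exists-alpha} then follows from Euler's relation together with $\sum_{f}\deg(f)=2E-2r$ summed over inner faces, while Conditions~(ii)--(iii), which yield existence and accessibility from the outer vertices at once, are checked on connected vertex subsets by the region/girth argument of Proposition~\ref{prop:existsForArb}, the face-vertex weights encoding the face degrees and Claim~\ref{claim:indegree} supplying the separating versus non-separating dichotomy.

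I expect the delicate points to be exactly those where the geometry is not generic: verifying that the fillings can be carried out without lowering the girth, and controlling the region inequalities for subsets that contain the special face-vertex or that meet the outer boundary (where some edges of $C_f$ are outer edges of weight $1$ rather than $b-1$). The arithmetic identities $S_{\mathrm{in}}(f)=k-b$ and $S_{\mathrm{out}}(f)=b+k$ (resp.\ $s-r$ and $r+s$) are the load-bearing computation, and it is precisely their dependence on whether $C_f$ encloses the inner root-face that forces the two cases of Claim~\ref{claim:indegree} and hence the distinction between ordinary and special face-vertices in Definition~\ref{def:regular}.
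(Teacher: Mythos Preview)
Your overall strategy---complete $M$ to some $B\in\cB_r^{(b)}$, orient $B$ via Proposition~\ref{prop:existsForArb}, then transfer---matches the paper, but you are missing the one concrete device that makes the transfer work. The paper does \emph{not} fill each inner face $f$ by gluing a $2b$-angulation directly onto $\partial f$. Instead, from each corner of $f$ it throws a \emph{transition-path} of length $b-1$ toward the interior, connects the free ends of these paths by a new simple cycle $C_f$ (of length $\deg(f)$), and only then patches a $2b$-angulation inside $C_f$. The point of this detour is that a path of $b-1$ edges each of weight $b-1$, with internal vertices of weight $b$, is forced to have extremal half-edge weights $\{0,1\}$; shrinking each transition-path therefore yields \emph{exactly} a weight-$1$ $Q$-edge, and contracting $C_f$ with its patch into a single face-vertex gives the regular orientation directly. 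Claim~\ref{claim:indegree} is applied to $C_f$, which by construction is a simple cycle of inner vertices; this is precisely what computes the weight of each face-vertex.

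Your version tries to apply Claim~\ref{claim:indegree} to $\partial f$ itself, but $\partial f$ need not be simple and may contain outer vertices---both hypotheses of the claim fail in general, as you partly acknowledge. More seriously, once you keep only the $M$-edges of $O_B$ you still have to manufacture the spoke orientations, and you end up with a transportation problem whose feasibility you defer to a direct application of Lemma~\ref{lem:exists-alpha} on $\MQ$. That alternative route is plausible, but you have not actually verified Conditions~(ii)--(iii) for subsets containing face-vertices; saying it ``reduces to the girth conditions by the region/girth argument of Proposition~\ref{prop:existsForArb}'' is not a proof, because the presence of face-vertices with large prescribed weights changes the inequalities that must be checked. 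The transition-path construction is exactly what lets the paper avoid this verification: it produces the regular orientation \emph{constructively} from $O_B$, and accessibility is inherited because shrinking paths and contracting regions can only improve reachability.
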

\fig{width=\linewidth}{compute_annular}{Process for constructing a \bbm-orientation of a bipartite map
$M\in\cA\rsb$ ($b=2$, $r=3$, $s=4$ here). In (b) $M$ is completed into a map $B\in\cB_r^{(b)}$, and $B$ is endowed with a \bbm-orientation. In (c) the \bbm-orientation orientation of $B$ is contracted into a regular orientation $X$ of $\MQ$. In (d) the map $M$ gets the \bbm-orientation $\si(X)$.}

\begin{proof}
The proof is illustrated in Figure~\ref{fig:compute_annular}.
The first step is to complete $M$ into a map $B\in\cB_r^{(b)}$, by adding vertices and edges
inside each inner face of $M$. We make use of the following basic facts (valid for $b\geq 2$):
\begin{itemize}
\item For each integer $j\leq b$, there exists a plane map $L_j$ of girth $2b$, whose outer face is a simple cycle $C$ of length $2j$, whose inner faces have degree $2b$, and such that for any pair $u,v$ of vertices on $C$
the distance between $u$ and $v$ on $C$ is the same as the distance between $u$ and $v$ in $L_j$.
\item For $r\leq s$ there exists an annular map $L_{s,r}$ where the contour $C$ of the outer face (of degree $2s$) and the contour of the inner root-face (of degree $2r$) are simple cycles, with all non-root faces of degree $2b$, with separating girth $2r$, non-separating girth $2b$, and with the following property: ``For any path $P\subset C$, and any path $P'$ having the same endpoints, if the cycle $P\cup P'$ is not separating, then then the length of $P'$ is greater or equal to the length of $P$."
\end{itemize}
 Now, for each inner face $f$ of $M$, we consider the sequence of the corners $c_1,\ldots,c_{\deg(f)}$ in clockwise order around $f$ (note that the incident vertices of these corners might not be all distinct). We ``throw" a simple path $P_i$ of length $(b-1)$, called a \emph{transition-path}, from each corner $c_i$
toward the interior of $f$. Denote by $v_i$ the vertex at the free extremity of each path $P_i$. We then connect the vertices $v_i$ along a simple cycle $C_f$ (of length $\deg(f)$), the order of the vertices along the cycle corresponding to the order of the
corners around $f$. 
Then we patch a copy of the plane map $L_{\deg(f)/2}$ inside $C_f$ if $f$ is not the inner root-face,
and we patch a copy of $L_{r,s}$ in the cycle $C_f$ in the inner root-face.
We obtain
a bipartite map $B\in\cB_r^{(b)}$ (it is easily checked that, thanks to the distance properties of the patched maps, 
the non-separating girth stays greater or equal to $2b$, and 
 the separating girth stays the same). 
By Proposition~\ref{prop:existsForArb}, $B$ admits a \bbm-orientation $O$.
Let $P$ be a transition path, and let $w,w'$ be the weights of the half-edges $h,h'$ at the extremities of $P$. We claim that $\{w,w'\}=\{0,1\}$. Indeed, since the weight of each edge of $P$ is $b-1$, and since the weights of the two half-edges incident to each inner vertex of $P$ add up to $b$, we have $w+w'=1$ (and the two weights are non-negative).
We then perform the following operations to obtain an orientation of $\MQ$:
\begin{enumerate}
\item we shrink each transition-path $P$ into a single 1-way edge of weight 1 by only keeping the extremal half-edges $h,h'$ of weight $w,w'$,
\item we contract the cycle inserted inside each inner face (and the map $L_j$ contained therein) into a single
vertex, which becomes a face-vertex of $Q$.
\end{enumerate}
We claim that the obtained orientation $X$ of $\MQ$ is a regular orientation. Indeed, the weights of the vertices and edges of $M$ are the same as in the \bbm orientation $O$, that is, inner (resp. outer) vertices of $M$ have weight $b$ (resp. $1$)
 and inner (resp. outer) edges of $M$ have weight $b-1$ (resp. $1$). Moreover, Claim~\ref{claim:indegree} implies that any non-special face-vertex $v$ has weight $w(v)=\deg(v)/2+b$, and the special face-vertex $v$ has weight $w(v)=\deg(v)/2+r$.
Thus $X$ is a regular orientation of $\MQ$, and it only remains to show that it is accessible from every outer vertex. Now, the orientation $O$ is accessible from the outer vertices, and moreover the operations for going from $O$ to $X$ can only increase the accessibility between vertices. Thus, $X$ is accessible from the outer vertices.
\end{proof}

We have established the existence of a regular orientation of $\MQ$. We will now complete the proof of the existence and uniqueness of a suitable \bbm-orientation of $M$. We first define a mapping between certain regular orientations of $\MQ$ and the \bbm-orientations of $M$. 

For an edge $e$ of $Q$, with $v$ the endpoint of $e$ in $M$, we call $M$\emph{-edge} of $e$ the edge of $M$ following $e$ in clockwise order around $v$. An example is given in Figure~\ref{fig:M-Q-MQ}.
A regular orientation of $\MQ$ is said to be \emph{coherent} if for each edge $e$ of $Q$ directed toward its endpoint $v$ in $M$, the corresponding $M$-edge $e'$ is a 1-way edge oriented toward $v$ (observe that in this case $v$ is an inner vertex and $e'$ is an inner edge of $M$).
 Given a coherent regular orientation $X$ of $\MQ$, we denote by $\si(X)$ the \zb-biorientation of $M$ obtained by
\begin{itemize}
\item keeping the biorientation of every edge $e$ of $M$ in $X$
\item keeping the weights on the half-edges, except if $e$ is the $M$-edge of an edge of $Q$ directed toward its endpoint in $M$, in which case the weights $0$ and $b-1$ are replaced by $-1$ and $b$ respectively.
\end{itemize}
It is now sufficient to show the following property of the mapping $\sigma$.

\begin{prop}\label{prop:transfer-to-bbm}
The mapping $\sigma$ is a bijection between the coherent regular orientations of $\MQ$ and the \bbm-orientations of $M$. Moreover, there exists a unique coherent regular orientation $O$ of $\MQ$ such that its image $\sigma(O)$ is suitable.
\end{prop}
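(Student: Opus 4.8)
The plan is to prove the two assertions separately, reducing both to the cleaner theory of $\alpha/\beta$-orientations of $\MQ$ through the correspondence $\si$.

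\emph{Well-definedness and the conditions of $\si(X)$.} First I would check that $\si$ is well defined on coherent orientations: each modified $M$-edge $e'$ is $1$-way toward an inner vertex $v$ (as observed right after the definition of coherence), so it carries an outgoing half-edge of weight $0$ and an ingoing half-edge of weight $b-1$, and replacing these by $-1$ and $b$ keeps the edge weight equal to $b-1$ and leaves every outgoing half-edge with weight $0$ or $-1$; since only inner edges are touched, $\si(X)$ is still admissible and satisfies condition~(i) of Definition~\ref{def:bbm-orientation_annular}. The heart of this step is the weight bookkeeping for (ii)--(iv), carried out locally from two structural facts: around each $M$-vertex the $M$-edges and the $Q$-edges alternate (exactly one $Q$-edge per corner), and each non-special (resp. special) face-vertex $v_f$ has degree $\deg_M(f)$. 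The alternation shows that the weight gained by an inner vertex $v$ under $\si$ equals the total weight of the $Q$-edges pointing into $v$, so the $\MQ$-weight $b$ of $v$ becomes the $M$-weight $b$ of $v$, giving (ii). For faces, coherence sets up a bijection between the $Q$-edges pointing away from $v_f$ and the weight-$(-1)$ outgoing half-edges bounding $f$, yielding $\weight_{\si(X)}(f)=w_X(v_f)-\deg_M(f)$; combined with $\deg(v_f)=\deg_M(f)$ this turns conditions (iii) and (iv) of Definition~\ref{def:regular} into conditions (iii) and (iv) of Definition~\ref{def:bbm-orientation_annular}, the special vertex producing the inner root-face of degree $2s$ and weight $r-s$.

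\emph{Inverting $\si$.} I would then write the inverse explicitly, which also shows injectivity. From a $\bbm$-orientation $O$ one recovers the (unchanged) orientation of the $M$-edges and flags the modified edges as the inner $1$-way edges carrying an outgoing half-edge of weight $-1$; un-boosting their weights from $(-1,b)$ back to $(0,b-1)$ restores the $M$-part. The direction of each $Q$-edge is then forced: the $Q$-edge in a corner points toward its $M$-endpoint $v$ exactly when the $M$-edge following it clockwise around $v$ is flagged, and points to the face-vertex otherwise, all $Q$-edges carrying weight $1$. Admissibility of $O$ ensures no inner edge is ingoing at an outer vertex, so the $Q$-edges at outer vertices all point inward as required. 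Running the same local bookkeeping backwards shows that the result is a coherent regular orientation and that the two maps are mutually inverse, establishing the bijection.

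\emph{Existence and uniqueness of the suitable representative.} Since $\si$ is a bijection, the second assertion is equivalent to: $M$ has a unique suitable $\bbm$-orientation, which I would prove through $\MQ$. Regular orientations of $\MQ$ are exactly the $\alpha/\beta$-orientations (with the outer face oriented as prescribed by admissibility) for $\alpha(v)=b$ on inner $M$-vertices, $\alpha(v)=1$ on outer ones, $\alpha(v_f)=\deg_M(f)/2+b$ (resp. $+r$ at the special vertex), and $\beta$ equal to $b-1$ on inner $M$-edges and $1$ elsewhere. By Proposition~\ref{prop:exists_regular_ori} such orientations exist and are accessible from the outer vertices, and the theory of $\alpha/\beta$-orientations provides a unique minimal one $X^{*}$ that remains accessible. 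It then remains to show: (a) $X^{*}$ is coherent; (b) $\si(X^{*})$ is suitable; (c) any coherent regular $X$ with $\si(X)$ suitable equals $X^{*}$. For (a) I would argue inside the triangular faces of $\MQ$ (each bounded by one $M$-edge and two $Q$-edges): a violation of coherence, i.e. a $Q$-edge pointing into $v$ whose clockwise-following $M$-edge is $2$-way or points away from $v$, produces a counterclockwise circuit through that triangle, contradicting minimality. For (b), minimality and accessibility transfer from $X^{*}$ to $\si(X^{*})$, because a counterclockwise circuit of $\si(X^{*})$ uses only $M$-edges whose biorientation $\si$ preserves and is thus already counterclockwise in $X^{*}$, and directed paths are preserved. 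For (c) I would use coherence in the reverse direction to transfer minimality back, so that $\si(X)$ suitable forces $X$ minimal and accessible, whence $X=X^{*}$ by uniqueness of the minimal accessible $\alpha/\beta$-orientation.

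\emph{Main obstacle.} The lengthy but routine part is the local weight bookkeeping of the first two paragraphs; the genuinely delicate point is the interplay between minimality and coherence in the third, namely showing that counterclockwise circuits can be pushed between $M$ and $\MQ$ across the $Q$-edges. Concretely, the hard steps are proving that the minimal orientation is coherent (a) and that a counterclockwise circuit using $Q$-edges can always be rerouted onto $M$-edges (c); this is exactly where the triangular structure of $\MQ$ and the definition of coherence must be exploited carefully.
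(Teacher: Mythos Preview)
Your overall architecture matches the paper's (Lemmas~\ref{lem:regular-to-bbm}--\ref{lem:bijtau}), and the bijection part is fine. The gaps are precisely in the three steps (a), (b), (c) that you flag as delicate; the arguments you sketch for them do not go through as written.

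\textbf{Step (a): coherence of the minimal orientation.} Your claim that a coherence violation ``produces a counterclockwise circuit through that triangle'' is false in general. In the triangle with face-vertex $a$, $M$-vertices $u,v$, $M$-edge $\eps=\{u,v\}$ and $Q$-edges $\{a,u\},\{a,v\}$, knowing that $\{a,v\}$ points to $v$ and that $\eps$ is traversable $v\to u$ does \emph{not} force $\{a,u\}$ to point toward $a$; if it points toward $u$ there is no circuit in that triangle. The paper's argument uses your triangle observation only to conclude that $\{a,u\}$ must point \emph{away} from $a$, then does a weight count at $u$ (it receives $\geq 1$ from $\eps$ and $1$ from $\{a,u\}$, so the next $M$-edge $\eps'$ has weight $\leq b-2$ at $u$, hence is not $1$-way toward $u$), and propagates this around $a$ to conclude that \emph{all} $Q$-edges at $a$ are outgoing, contradicting that $a$ has positive indegree. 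You are missing this propagation.

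\textbf{Step (b): accessibility of $\si(X^{*})$.} ``Directed paths are preserved'' is not true: a directed path in $X^{*}$ from an outer vertex to $w$ may pass through face-vertices via $Q$-edges, and those vertices and edges are absent from $\si(X^{*})$. Coherence lets you replace a step $a\to v$ by the $M$-edge $u\to v$, but $u$ need not lie on your path, so rerouting is nontrivial. The paper handles this by deleting the face-vertices one at a time and proving accessibility is maintained at each step via a ``minimal left-degree'' argument that, combined with coherence and minimality, produces a forbidden counterclockwise circuit if accessibility were lost (Lemma~\ref{lem:minmintau}). Some argument of this kind is needed.

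\textbf{Step (c): $\si(X)$ suitable $\Rightarrow X$ minimal.} ``Transfer minimality back'' by rerouting a counterclockwise circuit of $X$ onto $M$-edges is again not straightforward, for the same reason as in (b). The paper's argument (Lemma~\ref{lem:bijtau}) does not reroute: it picks a counterclockwise circuit $C$ of $X$ enclosing no other, observes that $C$ must use some $Q$-edge $e$ toward a vertex $v$ (otherwise $C$ would already be a counterclockwise circuit of $\si(X)$), uses coherence to get an $M$-edge $\eps$ strictly inside $C$ oriented toward $v$, and then uses accessibility of $X$ (Lemma~\ref{lem:minimal-regular}) to produce a directed path from outside into $C$ ending with $\eps$, yielding a chordal path of $C$ and a contradiction. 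Your sketch does not supply this mechanism.
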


The first part of Proposition~\ref{prop:transfer-to-bbm} is easy to establish.
\begin{lem}\label{lem:regular-to-bbm}
The mapping $\sigma$ is a bijection between the coherent regular orientations of $\MQ$ and the \bbm-orientations of $M$.
\end{lem}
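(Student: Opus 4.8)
The plan is to show that $\sigma$ lands inside the set of \bbm-orientations, and then to exhibit an explicit inverse. I would begin with two structural observations about an arbitrary regular orientation $X$ of $\MQ$, which by definition is an admissible \nb-biorientation. Since every edge of $Q$ has weight $1$ and every half-edge has non-negative weight (positive exactly when ingoing), each edge of $Q$ must be $1$-way with weight $1$ on its ingoing half-edge; in particular $\weight(v_f)$ equals the number of edges of $Q$ directed \emph{toward} $v_f$. Likewise, since $b\geq 2$, every inner edge of $M$ has positive weight $b-1$, hence is $1$-way or $2$-way with all outgoing half-edges of weight $0$. Thus $\sigma$ only ever changes a half-edge weight from $0$ to $-1$ (on the outgoing half-edge of a modified inner edge) and from $b-1$ to $b$ (on the matching ingoing half-edge), so $\sigma(X)$ is a \zb-biorientation whose outgoing half-edges have weight $0$ or $-1$ and whose inner edges still have weight $b-1$, giving Condition (i). Admissibility is inherited because modifications only touch inner edges whose head is an inner vertex, leaving the outer contour and the orientations of inner half-edges at outer vertices untouched.

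Next I would isolate a clean accounting principle. For a $Q$-edge $e$ with $M$-endpoint $v$, coherence forces its $M$-edge $e'$ to be $1$-way toward $v$ whenever $e$ is directed toward $v$, and since $e'$ cannot be $1$-way toward both of its endpoints, no inner edge is the $M$-edge of two distinct $Q$-edges that are both directed toward $M$; hence the modified inner edges are in bijection with the $Q$-edges directed toward $M$. For Condition (ii), fix an inner vertex $v$: each $Q$-edge directed toward $v$ contributes weight $1$ to $\weight(v)$ in $X$ as an ingoing $Q$-half-edge, and after $\sigma$ it instead contributes the $+1$ produced by modifying its $M$-edge $e'$ (whose head is $v$); every other ingoing $M$-half-edge is unchanged, so $\weight_{\sigma(X)}(v)=\weight_X(v)=b$. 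For the face conditions I would prove the local claim that the outgoing half-edge of a modified edge $e'$, which now carries weight $-1$, has the face $f$ of its triggering $Q$-edge on its right; tracing the clockwise convention around the shared vertex shows that the corner counterclockwise-before $e'$ is precisely the corner of $f$. Consequently $\weight_{\sigma(X)}(f)=-b_f$, where $b_f$ is the number of $Q$-edges around $f$ directed toward $M$, i.e. $b_f=\deg(f)-\weight_X(v_f)$. Substituting the regularity identity $\weight_X(v_f)=\deg(f)/2+b$ for a non-special face-vertex, or $\weight_X(v_f)=\deg(f)/2+r$ with $\deg(f)=2s$ for the special one, yields $\deg(f)/2+\weight(f)=b$ and, for the inner root-face, weight $r-s$, which are Conditions (iii) and (iv).

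To finish I would construct the inverse directly. Given a \bbm-orientation $Y$ of $M$, the edges to be ``unmodified'' are exactly the inner edges carrying an outgoing half-edge of weight $-1$; each such edge is $1$-way, say $u\to v$ with half-edge weights $(-1,b)$. I reset these to $(0,b-1)$, keep every other $M$-edge as in $Y$, and reinsert the edges of $Q$ as follows: each unmodified edge $e'$ selects the $Q$-edge in the corner counterclockwise-before $e'$ at its head $v$, which I direct toward $v$ with weight $1$, while every remaining $Q$-edge is directed toward its face-vertex with weight $1$. Because each corner has a unique clockwise-next $M$-edge, this assignment is well defined and injective, and reversing the two previous paragraphs shows the result is a coherent regular orientation of $\MQ$ sent back to $Y$ by $\sigma$. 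The main obstacle is exactly the local geometric bookkeeping of the second paragraph: verifying, with the paper's clockwise conventions, that the $-1$ half-edge of a modified edge is counted by the face $f$ of its triggering $Q$-edge, and that each modification is triggered by a unique $Q$-edge. Once these incidences are pinned down, everything reduces to the routine weight counting sketched above.
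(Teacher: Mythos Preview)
Your argument is correct and follows the same route as the paper's (very brief) proof: you verify that Conditions (i)--(iv) of regular orientations translate under $\sigma$ into Conditions (i)--(iv) of \bbm-orientations, and then you exhibit the explicit inverse, which is exactly what the paper means by ``easily seen to be surjective and invertible.'' Your elaboration of the vertex-weight and face-weight bookkeeping is sound, and the key observation that each modified inner edge is triggered by a \emph{unique} $Q$-edge (because an edge cannot be $1$-way toward both endpoints) is precisely the injectivity needed.

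One small slip to fix: in two places you write ``the corner counterclockwise-before $e'$'' when you mean the corner \emph{clockwise-before} $e'$ at the shared vertex $v$. By the paper's definition, the $M$-edge $e'$ \emph{follows} the $Q$-edge $e$ in clockwise order around $v$, so $e$ (and hence the corner of $f$) sits clockwise-before $e'$, not counterclockwise-before. Your conclusions---that $f$ lies on the right of the outgoing half-edge of $e'$, and that the inverse selects the correct $Q$-edge---are nonetheless correct, so this appears to be a labeling error rather than a conceptual one; just flip the word in both the face-weight paragraph and the inverse construction.
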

\begin{proof}
For any coherent regular orientation $X$ of $\MQ$, the image $\si(X)$ is a \bbm-orientation of $M$ since Conditions (i), (ii), (iii), and (iv) of regular orientations correspond respectively to Conditions (i), (ii), (iii), and (iv) of \bbm-orientations. Moreover, the mapping $\si$ is easily seen to be surjective and invertible.
\end{proof}

In order to establish the second part of Proposition~\ref{prop:transfer-to-bbm}, we need to examine the properties of the regular orientations of $\MQ$.
\begin{lem} \label{lem:minimal-regular} 
The regular orientations of $\MQ$ are all accessible from every outer vertex. Moreover there exists a unique minimal regular orientation of $\MQ$, and this orientation is coherent.
\end{lem}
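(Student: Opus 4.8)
The plan is to treat regular orientations as a special family of $\alpha/\beta$-orientations and to exploit, on one hand, the accessibility criterion of Lemma~\ref{lem:exists-alpha}, and on the other hand the classical fact that the $\alpha/\beta$-orientations of a plane map, equipped with the reversal of counterclockwise circuits, form a distributive lattice with a unique minimal element. First I would record that a regular orientation of $\MQ$ is exactly an admissible $\alpha/\beta$-orientation for the functions $\alpha(v)=b$ on inner vertices of $M$, $\alpha(v)=1$ on outer vertices, $\alpha(v_f)=\deg(v_f)/2+b$ on non-special face-vertices and $\alpha(v_f)=\deg(v_f)/2+r$ on the special one, together with $\beta(e)=b-1$ on inner edges of $M$ and $\beta(e)=1$ on outer edges of $M$ and on all edges of $Q$. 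Here $\alpha$ is well defined because the degree of each face-vertex is fixed, and note that every edge of $Q$, having weight $1$, is necessarily $1$-way.

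For accessibility, I would argue that condition~(iii) of Lemma~\ref{lem:exists-alpha} depends only on $(\alpha,\beta)$ and not on the chosen orientation. Indeed, if (iii) failed for some outer vertex $u$, there would be a nonempty set $S\not\ni u$ with $\sum_{v\in S}\alpha(v)=\sum_{e\in E_S}\beta(e)$; in any $\alpha/\beta$-orientation the total weight received by $S$ would then come entirely from edges internal to $S$, so no half-edge could enter $S$ from outside and $S$ would be unreachable from $u$. Since Proposition~\ref{prop:exists_regular_ori} exhibits a regular orientation accessible from every outer vertex, (iii) must hold for every outer $u$; consequently \emph{every} regular orientation is accessible from every outer vertex.

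For existence and uniqueness of the minimal regular orientation I would use circuit reversal. A counterclockwise circuit of a regular orientation of $\MQ$ uses only inner vertices, since an outer vertex cannot be entered by a directed edge: its only ingoing edge is an outer-cycle edge, which belongs to the clockwise outer contour and hence cannot be traversed in a counterclockwise circuit. Thus reversing a counterclockwise circuit preserves admissibility as well as the weights $\alpha,\beta$, and therefore maps regular orientations to regular orientations. Starting from the orientation of Proposition~\ref{prop:exists_regular_ori} and repeatedly reversing counterclockwise circuits (a process that strictly decreases a standard potential and therefore terminates) produces a minimal regular orientation. For uniqueness, given two minimal regular orientations $O_1,O_2$, the edges on which they differ carry a nonzero integer circulation, which in the plane decomposes into simple directed cycles; an innermost such cycle is counterclockwise for one of the two orientations, contradicting its minimality. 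Hence the minimal regular orientation is unique.

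It remains to prove that this minimal orientation $O$ is coherent, which I expect to be the main obstacle. The geometric input is that each inner edge $e'=\{v,v'\}$ of $M$ together with the face-vertex $v_f$ of an incident face $f$ bounds a triangular face $T=(v,v',v_f)$ of $\MQ$, whose other two sides are the $Q$-edges $\{v,v_f\}$ and $\{v',v_f\}$; moreover coherence is precisely a condition on such triangles, since the $M$-edge associated with the $Q$-edge $\{v_f,v\}$ is exactly $e'$. I would argue by contradiction: if $O$ had a $Q$-edge directed $v_f\to v$ whose $M$-edge $e'$ is not $1$-way toward $v$, then, using that $Q$-edges are $1$-way and inner $M$-edges have weight $b-1$, the boundary of $T$ can be oriented into a counterclockwise circuit, contradicting minimality. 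The delicate point is the case where the second $Q$-side $\{v',v_f\}$ points in the unfavourable direction for the triangle alone to close up; there one must follow the forced edge directions through the neighbouring triangular faces incident to $v_f$ until the path closes into a counterclockwise circuit. Carrying out this finite local case analysis correctly, matching every orientation pattern of a triangle against the coherence condition and the existence of a counterclockwise circuit, is the technical heart of the argument.
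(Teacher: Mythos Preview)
Your plan matches the paper's argument closely for the first two parts. The paper also observes that regular orientations are exactly the $\alpha/\beta$-orientations of $\MQ$ for the functions you write down, invokes Proposition~\ref{prop:exists_regular_ori} together with the fact that accessibility depends only on $(\alpha,\beta)$, and then cites the general existence and uniqueness of a minimal $\alpha/\beta$-orientation (Lemma~\ref{lem:unique_minimal}) rather than redoing the circuit-reversal argument. Your extra remark, that counterclockwise circuits avoid outer vertices so that reversal preserves admissibility, is a useful point the paper leaves implicit.

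For coherence, your setup is right and the propagation idea is the correct one, but two things should be sharpened. First, the mechanism that makes the propagation work is the weight constraint $\alpha(v)=b$ at the $M$-vertex, not just the edge weights: if the $Q$-edge $e'=\{v_f,u\}$ is forced to point $v_f\to u$ (to avoid the triangle $\{v_f,v,u\}$ being a counterclockwise circuit), then the ingoing contributions at $u$ from $\eps$ (positive by hypothesis) and from $e'$ (equal to $1$) already sum to more than $0$, so the remaining half-edge of the next $M$-edge $\eps'$ at $u$ has weight at most $b-2$, and $\eps'$ is again not $1$-way toward $u$. This is the inductive step; without it the propagation does not get off the ground. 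Second, the contradiction the paper reaches is not that a counterclockwise circuit eventually closes up, but that after going once around $v_f$ every $Q$-edge incident to $v_f$ is outgoing, forcing the indegree of $v_f$ to be $0$, which violates $\alpha(v_f)=\deg(v_f)/2+b>0$. Your version, extracting a counterclockwise circuit from the boundary of $f$, can be made to work but requires an extra argument when that boundary is not a simple cycle; the indegree contradiction avoids this.
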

We use the following general result proved in \cite{BeFu10}.
\begin{lem}\label{lem:unique_minimal}
Let $G=(V,E)$ be a plane map. Let $\alpha$ be a function from $V$ to
$\mathbb{N}$ and and $\beta$ a function from $E$ to $\mathbb{N}$. If $G$ has an $\alpha/\beta$-orientation, then $G$ has a unique minimal $\al/\beta$-orientation.
\end{lem}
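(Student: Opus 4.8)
The plan is to encode each $\alpha/\beta$-orientation by an integer \emph{face-potential} and to read off both existence and uniqueness of the minimal orientation from the standard lattice structure of such potentials. First I would fix, once and for all, one $\alpha/\beta$-orientation $O_0$ (which exists by hypothesis) together with an arbitrary reference orientation of the underlying graph, designating for each edge $e$ a \emph{head} half-edge. For any $\alpha/\beta$-orientation $O$, set $\phi_O(e)=w_O(e)-w_{O_0}(e)$, where $w_O(e)$ is the weight carried in $O$ by the head half-edge of $e$. Since every edge has fixed total weight $\beta(e)$ and every vertex has fixed weight $\alpha(v)$, the function $\phi_O$ satisfies flow conservation at each vertex, i.e.\ it is an integer circulation. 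Because $G$ is a connected plane map, planar duality lets me write any integer circulation as $\phi_O(e)=g_O(f_L)-g_O(f_R)$ for an integer face-potential $g_O$ (with $f_L,f_R$ the faces to the left and right of the reference orientation of $e$), unique once we normalize $g_O(f_\infty)=0$ on the outer face. The map $O\mapsto g_O$ is injective, and the $\mathbb{N}$-biorientation constraint $0\le w_O(e)\le\beta(e)$ becomes a two-sided bound on each difference $g_O(f_L)-g_O(f_R)$.

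Next I would define the \emph{flip} of a counterclockwise circuit $C$ of $O$ (a simple cycle all of whose edges are $2$-way or $1$-way in the counterclockwise direction, in the sense of Section~\ref{section:definitions}): along $C$, decrease by one the weight of each half-edge that is the head in the counterclockwise traversal, and increase by one the weight of the corresponding tail half-edge. One checks immediately that this preserves every edge-weight $\beta(e)$ and every vertex-weight $\alpha(v)$, that non-negativity is preserved because each such head half-edge had weight at least one, and that in terms of potentials the flip decreases $g_O$ by one on every face enclosed by $C$ and leaves the others unchanged. Consequently the integer $\Lambda(O):=\sum_{f\neq f_\infty}g_O(f)$ strictly decreases under a flip. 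Since the weights lie in $[0,\beta(e)]$, the potential $g_O$, hence $\Lambda(O)$, is bounded below, so the finite nonempty set of $\alpha/\beta$-orientations contains one, say $O^\ast$, minimizing $\Lambda$; this $O^\ast$ admits no counterclockwise circuit (otherwise a flip would lower $\Lambda$), i.e.\ it is minimal. This settles existence.

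For uniqueness I would take two minimal $\alpha/\beta$-orientations $O_1,O_2$ and consider $\phi=\phi_{O_1}-\phi_{O_2}$ with its potential $g=g_{O_1}-g_{O_2}$, normalized at $f_\infty$. If $\phi\neq 0$, then $g$ is nonconstant, and I would look at an extremal level set of $g$ whose faces avoid $f_\infty$: if $f_\infty$ is not a face of maximum potential, take the top level set $\{g=\max g\}$; otherwise take the bottom level set $\{g=\min g\}$, which then avoids $f_\infty$. The edge-boundary of such a level set is a disjoint union of simple cycles (a bond in the dual corresponds to a cycle in $G$); I would pick one cycle $C$ and traverse it counterclockwise with the extremal region on its inside. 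For every edge of $C$ the difference $g(f_L)-g(f_R)$ has a fixed nonzero sign, which forces, in the appropriate one of $O_1,O_2$, the counterclockwise-head half-edge to carry positive weight; hence every edge of $C$ is $2$-way or $1$-way counterclockwise, so $C$ is a counterclockwise circuit of that orientation, contradicting its minimality. Therefore $\phi=0$ and $O_1=O_2$. The main obstacle is the orientation bookkeeping in this last step: making the sign convention of the planar potential consistent with the definition of a counterclockwise circuit, and arranging the case split on the outer face so that the extracted cycle is genuinely counterclockwise and lies in the orientation one wishes to contradict. Equivalently, one may phrase the whole argument as the statement that the face-potentials of $\alpha/\beta$-orientations form a distributive lattice under pointwise minimum and maximum, the minimal orientation being the pointwise-minimum potential; the verification that this class of potentials is closed under pointwise $\min$ and $\max$ is the same box-constraint computation.
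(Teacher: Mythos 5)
Your proposal is correct, but it is worth noting that this paper contains no proof of Lemma~\ref{lem:unique_minimal} at all: the statement is imported from~\cite{BeFu10}, where it is obtained by reducing weighted biorientations to ordinary $\alpha$-orientations and invoking the classical theorem of Felsner and Ossona de Mendez that a planar map with an $\alpha$-orientation has a unique minimal one. What you do instead is re-prove that classical result directly in the bioriented setting: you encode each $\alpha/\beta$-orientation by the dual face-potential of the circulation $\phi_O$ (injectivity and the box constraints are exactly right), use circuit flips to strictly decrease $\Lambda(O)=\sum_{f\neq f_\infty}g_O(f)$ for existence, and extract a counterclockwise circuit from an extremal level set of $g_{O_1}-g_{O_2}$ for uniqueness. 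This is the standard lattice-of-circulations argument, and what it buys is a self-contained proof that needs no external black box and makes the distributive-lattice structure of $\alpha/\beta$-orientations explicit; what the reduction in~\cite{BeFu10} buys is brevity, at the cost of a (mildly fussy) correspondence between biorientations of $G$ and orientations of an auxiliary map.

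The one step in your write-up that genuinely needs the tightening you yourself flag is the extraction of the cycle $C$. As an edge set, the dual cut $\delta^*(S)$ of the extremal level set does decompose into edge-disjoint bonds, each corresponding to a \emph{simple} primal cycle, as you say; but an arbitrary cycle of this decomposition need not have the extremal region on its bounded side (think of an annular level set, whose boundary consists of two cycles with the region inside one and outside the other). The correct choice: take a connected component $S_1$ of the level set and the bond separating $S_1$ from the component of its dual complement that contains $f_\infty$; the corresponding simple cycle then has $S_1$ entirely inside (any dual path crossing the cycle must use a dual edge of the bond), and since a face sharing an edge with a face of $S_1$ and lying at the same level would belong to $S_1$ by dual adjacency, every edge of the cycle has a strictly non-extremal face outside. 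Traversing with the inside on the left then forces, exactly as you compute, a counterclockwise circuit of $O_1$ in the max case and of $O_2$ in the min case, completing the contradiction. With this choice made explicit, your argument is complete.
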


\begin{proof}[Proof of Lemma~\ref{lem:minimal-regular}]
Observe that there exist functions $\alpha$, $\beta$ depending on $M$ such that the regular orientations of $\MQ$ are exactly the $\alpha/\beta$-orientations of $\MQ$. Now, Proposition~\ref{prop:exists_regular_ori} asserts the existence of a regular orientation of $\MQ$ which is accessible from every outer vertex. Since the accessibility of the $\alpha/\beta$-orientations only depends on $\alpha$ and $\beta$ (see Condition (iii) of Lemma~\ref{lem:exists-alpha}), this implies that all the regular orientations are accessible from the outer vertices of $M$. Moreover Lemma~\ref{lem:unique_minimal} ensures that $\MQ$ has a unique minimal regular orientation $X$. It only remains to prove that $X$ is coherent. 

Let $e=\{a,v\}$ be an edge of $Q$ oriented toward its endpoint $v$ in $M$. Let $\eps=\{u,v\}$ be the $M$-edge of $e$, and let $f$ be the face of~$M$ containing~$e$. We want to show that $\eps$ is oriented 1-way toward $v$. Assume this is not the case, that is, the weight $i$ of the half-edge incident to $u$ is positive.
Let $e'=\{a,u\}$ be the edge preceding $e$ in clockwise order around $a$.
Since $\eps$ can be traversed from $v$ to $u$, the edge $e'$ must be oriented away from $a$ (otherwise the triangle $\{a,v,u\}$ would form a counterclockwise circuit, in contradiction with the minimality of $X$).
Let $\eps'$ be the $M$-edge of $e'$. Since the vertex $u$ has total weight $b$, with contribution $i>0$ by the edge $\eps$
and contribution $1$ by the edge $e'$, we conclude that the weight of $\eps'$ at $u$ is at most $b-2$,
hence $\eps'$ is not 1-way toward $v$. By the same arguments, the edge $e''$
preceding $e'$ in clockwise order around $a$ is also oriented away from $a$. Continuing in
this way around the face $f$ we reach the contradiction that all edges incident to $a$ are oriented away from $a$.
\end{proof}

The next two lemmas complete the proof of Proposition~\ref{prop:transfer-to-bbm} by showing that the image of a coherent regular orientation $X$ by the mapping $\si$ is suitable if and only if $X$ is the minimal regular orientation of $\MQ$.

\begin{lem}\label{lem:minmintau}
The minimal regular orientation of $\MQ$ is mapped by $\sigma$ to a suitable \bbm-orientation of $M$.
\end{lem}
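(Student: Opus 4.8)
The plan is to check the three defining properties of a suitable \zb-biorientation (Definition~\ref{def:wO}) for the \zb-biorientation $\si(X)$, where $X$ denotes the minimal regular orientation of $\MQ$ (which is coherent by Lemma~\ref{lem:minimal-regular}). Admissibility comes for free: by Lemma~\ref{lem:regular-to-bbm}, $\si(X)$ is a \bbm-orientation of $M$, and admissibility is part of Definition~\ref{def:bbm-orientation}. So the work reduces to establishing minimality and accessibility from every outer vertex.

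For minimality, the key observation is that $\si$ only modifies the \emph{weights} of certain half-edges and never the biorientation itself: each edge of $M$ keeps the same status (0-way, 1-way or 2-way) in $X$ and in $\si(X)$. Since being a counterclockwise circuit depends only on the orientation status of the edges and on the enclosed region, and not on the half-edge weights, any counterclockwise circuit of $\si(X)$ --- necessarily a simple cycle of $M$-edges --- would also be a counterclockwise circuit of $X$ read inside $\MQ$ (the $M$-edges keep their status, and the enclosed planar region is the same). As $X$ is minimal, no such circuit exists, so $\si(X)$ is minimal.

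The main work, and the main obstacle, is accessibility. By Lemma~\ref{lem:minimal-regular}, $X$ is accessible from every outer vertex in $\MQ$, and I want to deduce that $\si(X)$ is accessible from every outer vertex in $M$; since the outer boundary is a directed cycle of 1-way edges, this amounts to showing every inner vertex is reachable from the outer boundary through $M$-edges. I would argue by contradiction: let $R$ be the set of $M$-vertices reachable in $\si(X)$ and suppose $S=V(M)\setminus R$ is nonempty (it contains only inner vertices). Accessibility of $X$ gives a directed path in $\MQ$ from the outer boundary to some vertex of $S$; let $x_j$ be its first vertex lying in $S$ and let $x_i\in R$ be the last $M$-vertex preceding $x_j$. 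Because a face-vertex of $\MQ$ is adjacent only to $M$-vertices (there is no edge between two face-vertices), $x_i$ and $x_j$ are separated by at most one face-vertex, so either $x_i\to x_j$ is an $M$-edge (immediately contradicting $x_j\in S$) or we have a single detour $x_i\to a\to x_j$ with $a$ a face-vertex.

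In the remaining case, coherence of $X$ enters decisively: since the $Q$-edge $a\to x_j$ is directed toward its $M$-endpoint $x_j$, its associated $M$-edge $e'=\{u,x_j\}$ is 1-way toward $x_j$, so $u\in S$ (otherwise $x_j$ would be reachable). The hard part will be to turn this into a contradiction by a local analysis around the face $f$ containing $a$: using that exactly $\deg(f)/2+b$ of the $Q$-edges at $a$ are ingoing and $\deg(f)/2-b$ outgoing, and applying coherence to each outgoing $Q$-edge of $a$ to produce directed boundary $M$-edges, one reroutes the detour along the boundary of $f$ and propagates reachability from $x_i$ to $x_j$, with minimality forbidding the counterclockwise boundary configurations that would block the rerouting. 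Making this local correspondence between outgoing $Q$-edges and coherently oriented boundary edges precise --- and checking that the rerouted pieces assemble into a genuine directed $M$-path reaching $x_j$ --- is where I expect the delicate bookkeeping to lie.
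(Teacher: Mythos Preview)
Your treatment of admissibility and minimality is correct and matches the paper: $\sigma$ only changes weights, so any counterclockwise circuit of $\sigma(X)$ lying in $M$ is already a counterclockwise circuit of $X$.

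The accessibility argument, however, is only a sketch, and the part you flag as ``delicate bookkeeping'' is genuinely the whole difficulty. Once you have the detour $x_i\to a\to x_j$ and observe (via coherence) that the $M$-edge $\{u,x_j\}$ is 1-way toward $x_j$ with $u\in S$, you have not yet produced any contradiction: you have merely found another vertex of $S$, and your proposed ``rerouting along the boundary of $f$ using outgoing $Q$-edges of $a$'' does not obviously terminate or yield a directed $M$-path from $R$ into $S$. The counts you cite (numbers of ingoing/outgoing $Q$-edges at $a$) do not by themselves control which boundary edges of $f$ are traversable in the right direction, and there is no evident reason why walking around $f$ should reach $x_j$ from $x_i$ rather than simply uncovering more and more vertices of $S$.

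The paper closes this gap differently. Instead of working with the whole reachable set at once, it removes the face-vertices $b_1,\dots,b_k$ one at a time and shows inductively that each intermediate biorientation $H_i$ remains accessible from $v_0$. For the induction step, one fixes a path $P$ from $v_0$ to a supposedly inaccessible $w$ passing through $b_i$, chosen to minimise the \emph{left-degree} (the number of $Q$-edges between the entering edge $e_0$ and the leaving edge $e_1$ clockwise around $b_i$). Coherence at the leaving edge $e_1$ produces a 1-way $M$-edge $\epsilon=\{v',v\}$, and minimality of the left-degree forces the $Q$-edge $\{b_i,v'\}$ to point toward $b_i$. A second path $P'$ from $v_0$ to $v'$ (which must also pass through $b_i$) is then shown to close up, together with that $Q$-edge, into a counterclockwise circuit --- contradicting the minimality of $X$. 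So the contradiction is obtained not by rerouting inside a single face, but by a path-extremality argument that manufactures a forbidden circuit; this is the idea your outline is missing.
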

\begin{proof}
Let $X$ be the minimal regular orientation of $\MQ$, and let $Y=\sigma(X)$. We want to prove that $Y$ is minimal and accessible from every outer vertex. The minimality of $Y$ is obvious since $Y$ is a suborientation (forgetting
the weights) of $X$.
We now consider an outer vertex $v_0$ and prove that $Y$ is accessible from $v_0$.
Let $k$ be the number of inner faces of $M$, and let $b_1,\ldots,b_k$ be the face-vertices of $\MQ$.
Let $H_0$ be the underlying biorientation of $X$ (forgetting the weights),
and for $i\in \{1,\ldots,k\}$ let $H_i$ be the biorientation obtained from $H_0$ by deleting the face-vertices $b_1,\ldots,b_i$ and their incident edges. Note that $H_k$ is the underlying biorientation of $Y$.
Recall that $H_0$ is accessible from $v_0$. We will now show that $H_i$ is accessible from $v_0$ by induction on $i$.
We assume that $H_{i-1}$ is accessible from $v_0$ and suppose for contradiction that a vertex $w$ is not accessible from $v_0$ in $H_i$. In this case, each directed path $P$ from $v_0$ to $w$ in $H_{i-1}$ goes through $b_i$.
Let $P$ be such a path, and let $e_0$ and $e_1$ be the edges arriving at and leaving $b_i$ along $P$.
We define the \emph{left-degree} of $P$ to be the number of edges of $Q$ between $e_0$ and $e_1$ in clockwise order around $b_i$. 
We choose $P$ so as to minimize the left-degree. Call $P_0$ the portion of $P$ before $b_i$, and $P_1$ the portion of $P$ after $b_i$. Let $u$ be the origin of $e_0$, let $v$ be the end of $e_1$ and let $\eps$ be the $M$-edge of $e_1$; see Figure~\ref{fig:argument}.
Since the regular orientation $X$ is coherent, the edge $\eps$ is oriented 1-way toward $v$.
Call $v'$ the origin of $\eps$. Note that $v'\neq u$ (if $v'=u$ one could pass by $\eps$, thus avoiding $b_i$, to go from $v_0$ to $w$) and that the edge $e_1'=\{b_i,v'\}$ preceding $e_1$ in clockwise order around $b_i$ must be directed from $v'$ to $b_i$ (otherwise one could replace in $P$ the portion $u\to b_i\to v$ by $u\to b_i\to v'\to v$, yielding a path with smaller left-degree, a contradiction).
Since $H_{i-1}$ is accessible from $v_0$, there exists a directed path $P'$ in $H_{i-1}$ from $v_0$ to $v'$. We can choose $P'$ in such a way that it shares an initial portion with $P$ but does not meet again $P$ once it leaves it.
Note that $v'$ is not accessible from $v_0$ in $H_i$ (if it was, so would be $v$, hence so would be $w$), so $P'$ has to pass by $b_i$, so the portion of $P'$ before $b_i$ equals $P_0$.
Let $e'$ be the edge taken by $P'$ when it leaves $b_i$ and let $P_1'$
 be the portion of $P'$ after $e'$.
 Note that $e'$ can not be strictly between $e_0$ and $e_1$ in clockwise order around $b_i$
 (otherwise by a similar argument as above, it would be possible to produce a path
 from $v_0$ to $w$ with smaller left-degree than in $P$).
 Since $P_1'$ can not meet $P_0$ again, it has to form a counterclockwise circuit together with the two edges
 $e_1'$ and $e'$, see Figure~\ref{fig:argument}. We reach a contradiction.
 This concludes the proof that $H_i$ is accessible from $v_0$. By induction on $i$, the biorientation $H_k$ underlying $Y$ is accessible from $v_0$. Thus $Y$ is accessible from every outer vertex. Hence $Y$ is suitable.
\end{proof}

\fig{width=8cm}{argument}{The situation in the proof of Lemma~\ref{lem:minmintau}.}

\begin{lem}\label{lem:bijtau}
If $X$ is a coherent regular orientation of $\MQ$ which is not minimal, then its image by $\si$ is not suitable.
\end{lem}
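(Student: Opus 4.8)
The plan is to prove the lemma by producing a counterclockwise circuit. Since a suitable orientation is in particular minimal, to prove that $\sigma(X)$ is not suitable it suffices to prove that $\sigma(X)$ is not minimal, i.e.\ that it admits a counterclockwise circuit. The key preliminary remark is that $\sigma$ modifies only the weights of the edges of $M$ and deletes the edges of $Q$ together with the face-vertices; in particular it leaves unchanged whether each edge of $M$ is $0$-, $1$- or $2$-way, and it preserves the direction of each $1$-way edge of $M$. Consequently a simple cycle of $M$ is a counterclockwise circuit of $\sigma(X)$ if and only if it is a counterclockwise circuit of $X$ using only edges of $M$. Thus it suffices to prove that a coherent regular orientation $X$ which is not minimal admits a counterclockwise circuit all of whose edges are edges of $M$.

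Since $X$ is not minimal, it has at least one counterclockwise circuit. Among all counterclockwise circuits of $X$ I would choose one, say $\mathcal{C}$, visiting the smallest number of face-vertices of $\MQ$. If $\mathcal{C}$ visits no face-vertex we are done, so suppose it passes through a face-vertex $b_f$, entering along a spoke $u\to b_f$ and leaving along a spoke $b_f\to v$ (both are $1$-way edges of weight $1$, since they are edges of $Q$). The idea is to \emph{reroute} the detour $u\to b_f\to v$ along the boundary of the face $f$ of $M$, on the exterior side of this detour, so as to avoid $b_f$; the rerouted portion uses only edges of $M$, and after extracting a simple sub-cycle this should produce a counterclockwise circuit visiting strictly fewer face-vertices than $\mathcal{C}$, contradicting the minimality of the choice. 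The coherence of $X$ is exactly what makes the reroute legitimate at its endpoint: applied to the outgoing spoke $b_f\to v$, it guarantees that the boundary edge of $f$ incident to $v$ lying clockwise-next to that spoke is $1$-way \emph{toward} $v$, i.e.\ is traversable in the direction needed to arrive at $v$ while keeping the interior of the cycle on the left.

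The main obstacle is the reroute step, namely checking that the whole boundary arc of $f$ used in the reroute is consistently oriented, so that the new cycle is genuinely a counterclockwise directed circuit rather than just a topological shortcut. Coherence controls directly only the single edge arriving at $v$, so to propagate the orientation around the boundary of $f$ I would argue by walking along that boundary and invoking coherence (together with the minimality of $\mathcal{C}$) at each corner: where the next boundary edge already points the correct way one continues, and where it does not there must be a spoke directed out of $b_f$, which either permits one to shorten the detour or forces, exactly as in the final paragraph of the proof of Lemma~\ref{lem:minimal-regular}, the contradictory conclusion that all edges around $f$ are oriented away from $b_f$. This local analysis around the face is the delicate point; once it is in place, the reduction above assembles immediately to exhibit a counterclockwise circuit of $\sigma(X)$, so that $\sigma(X)$ is not minimal and \emph{a fortiori} not suitable. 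Combined with Lemma~\ref{lem:minmintau} and the uniqueness of the minimal regular orientation (Lemma~\ref{lem:minimal-regular}), this completes the proof of Proposition~\ref{prop:transfer-to-bbm}.
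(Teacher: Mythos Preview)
Your strategy of showing that $\sigma(X)$ fails minimality by producing a counterclockwise circuit avoiding all face-vertices is natural, but the reroute step has a genuine gap. Coherence applied to the outgoing spoke $b_f\to v$ gives you a single $M$-edge $\epsilon$ directed toward $v$, and this edge lies \emph{strictly inside} $\mathcal{C}$ (it is clockwise-adjacent at $v$ to the incoming half-edge of the spoke, hence in the interior angular sector), not on an ``exterior side''. Your propagation idea then needs, at each step, the next spoke around $b_f$ to be outgoing in order to invoke coherence again. When you hit an incoming spoke $v_i\to b_f$ you claim this ``permits one to shorten the detour'' or forces all spokes outward ``exactly as in the final paragraph of the proof of Lemma~\ref{lem:minimal-regular}''. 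But that paragraph relied on the \emph{minimality} of $X$ (to forbid the small counterclockwise triangle $\{a,v,u\}$), and here $X$ is explicitly not minimal, so the argument does not transfer. Nor is it clear how an incoming spoke at an interior vertex $v_i\notin\mathcal{C}$ lets you build a new counterclockwise circuit with fewer face-vertices: the directed segment $v_i\to\cdots\to v$ you have assembled does not obviously hook back onto $\mathcal{C}$.

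The paper uses a different extremal choice and, crucially, the accessibility half of Lemma~\ref{lem:minimal-regular}, which you never invoke. One takes a counterclockwise circuit $C$ of $X$ that encloses no other counterclockwise circuit, so that $C$ has no chordal path. Assuming $\sigma(X)$ minimal forces $C$ to use some $Q$-edge $e$ directed toward a vertex $v$ of $M$; coherence yields the $M$-edge $\epsilon$ strictly inside $C$, $1$-way toward $v$. Since every regular orientation is accessible from every outer vertex, there is a directed path in $X$ from an outer vertex to $v$ ending with $\epsilon$; its portion inside $C$ is a chordal path, contradicting the choice of $C$. The missing ingredient in your attempt is precisely this use of accessibility to feed a path in from outside.
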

\begin{proof}
Suppose for contradiction that the \bbm-orientation $Y=\si(X)$ is suitable.
Since $X$ is not minimal, it has a simple counterclockwise circuit $C$. By choosing $C$ to enclose no other counterclockwise circuit, we can assume that $C$ has no \emph{chordal path} (a chordal path is a directed path strictly inside of $C$ connecting two vertices of $C$). Since $Y$ has no counterclockwise circuit, $C$ must contain at least one edge $e$ of $Q$ oriented toward its endpoint $v$ in $M$. Since $X$ is coherent, the $M$-edge $\eps$ of $e$ (note that $\eps$ is strictly inside $C$) is oriented 1-way toward $v$.
 Let $v_0$ be an outer vertex.
Since $X$ is accessible from $v_0$ (by Lemma~\ref{lem:minimal-regular}), there exists an oriented path in $X$ from $v_0$ to $v$ ending at the edge $\eps$. The portion $P$ of the path inside $C$ is a chordal path for $C$, yielding a contradiction.
\end{proof}

We have proved Proposition~\ref{prop:transfer-to-bbm} through Lemmas~\ref{lem:regular-to-bbm},~\ref{lem:minimal-regular},~\ref{lem:minmintau} and~\ref{lem:bijtau}. This establishes that any bipartite map $M\in\cA\rsb$ has a unique suitable \bbm-orientation. The necessity of being in $\cA\rsb$ was established in Corollary~\ref{cor:necess}. This concludes the proof of Theorem~\ref{thm:bip_annular} for $b\geq 2$.

\subsection{Existence and uniqueness of a \ddm-orientation for $d\geq 2$} \label{sec:dgeq2}
In this subsection, we fix positive integers $d,p,q$ such that $p\leq q$, and consider
a map $M\in\cA\pqd$.
We will prove that if $d\geq 2$, then $M$ admits a unique suitable \ddm-orientation (thereby completing the proof of Theorem~\ref{thm:gen_annular} for $d\geq 2$). This will be done by a reduction to the bipartite case as illustrated in Figure~\ref{fig:reduc_gen}.

We denote by $M'$ the map obtained from $M$ by inserting a vertex $v_e$, called an \emph{edge-vertex}, in the middle of each edge $e$ of $M$. Clearly $M'$ is bipartite and is in $\cA\Bpqd$ since
cycle lengths are doubled. Given a \zb-biorientation of $M'$, the \emph{induced orientation} on $M$ is the \zb-biorientation of $M$ obtained by contracting the edge-vertices and their two incident half-edges (the two other half-edges get glued together).
\begin{claim}\label{claim:d-1}
Let $X$ be a $d/(d-1)$-orientation of $M'$, and let $Y$ be the induced orientation on $M$. Then for any inner edge of $M$ the weights $i,j$ on the half-edges satisfy either $i<d$, $j<d$ and $i+j=d-2$, or $\{i,j\}=\{-1,d\}$ in which case the edge is called \emph{special}.
\end{claim}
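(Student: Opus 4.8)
The plan is to argue entirely locally, edge by edge. First I would fix an inner edge $e=\{u,v\}$ of $M$ and look at what happens around its edge-vertex $v_e$ in the subdivided map $M'$. Since $e$ is inner, $v_e$ is an inner vertex of $M'$ and the two sub-edges $\{u,v_e\}$ and $\{v_e,v\}$ are inner edges of $M'$; recall also that the $d/(d-1)$-orientation $X$ is just the $b=d$ instance of a \bbm-orientation, so it is a \zb-biorientation in which every outgoing half-edge has weight $0$ or $-1$. Write $i,j$ for the weights of the half-edges of $e$ incident to $u$ and $v$ respectively — these are exactly the weights that survive in the induced orientation $Y$, since contraction deletes the two half-edges at $v_e$ and glues $h_u,h_v$ together — and write $a,b$ for the weights of the two (contracted) half-edges incident to $v_e$. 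The inner-edge condition of $X$ gives the two relations $i+a=d-1$ and $j+b=d-1$, while the inner-vertex condition gives $\weight(v_e)=d$.

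The key step, and the reason two regimes appear, is that in a \zb-biorientation the weight of a vertex counts only its \emph{ingoing} half-edges (those of positive weight), whereas the contracted half-edges may well be outgoing with weight $0$ or $-1$. Hence $\weight(v_e)=d$ does not read $a+b=d$ but rather $a^{+}+b^{+}=d$, with $x^{+}:=x$ when $x>0$ and $x^{+}:=0$ otherwise. I would then split on which of the two half-edges at $v_e$ is ingoing. If both are ingoing, $a+b=d$ with $a,b\geq 1$, so $a,b\in\{1,\dots,d-1\}$ and $i=d-1-a$, $j=d-1-b$ are non-negative, both $<d$, with $i+j=d-2$. If exactly one is ingoing, that half-edge alone must absorb the full weight $d$, forcing it to equal $d$ and hence forcing the corresponding $i$ (or $j$) to be $-1$; the other, outgoing, half-edge has weight $0$ or $-1$, giving the partner weight $d-1$ (again $i+j=d-2$, both $<d$) or $d$ respectively, the latter producing exactly $\{i,j\}=\{-1,d\}$. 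Finally, both half-edges at $v_e$ being outgoing is impossible, as it would force $d=0$. A symmetric argument covers the mirror configuration, and the claim follows.

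I do not expect a genuine obstacle here: once the set-up is correct the statement reduces to this finite case analysis. The one point demanding care — which I would flag as the main subtlety — is the weight bookkeeping for \zb-biorientations. One must resist simply summing all four half-edge weights around $v_e$ (which would spuriously predict $i+j=d-2$ in every case) and instead use that $\weight(v_e)$ ignores outgoing half-edges, together with the admissibility restriction that outgoing weights lie in $\{0,-1\}$. It is precisely the interplay of these two facts that isolates the \emph{special} edges $\{i,j\}=\{-1,d\}$ from the generic edges with $i+j=d-2$.
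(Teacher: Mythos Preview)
Your argument is correct and follows the same local approach as the paper, which simply notes that the claim ``is an easy consequence of the fact that the edge-vertex $v_e$ has weight~$d$, and the two edges of $M'$ incident to $v_e$ have weight $d-1$.'' You have unpacked precisely this: the relations $i+a=d-1$, $j+b=d-1$, and the vertex condition $a^{+}+b^{+}=d$ (using that only ingoing half-edges contribute and that outgoing half-edges carry weight $0$ or $-1$) exhaust all cases and yield exactly the stated dichotomy. Your emphasis on not naively summing all four half-edge weights is well placed; that is indeed the only point where one might slip.
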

\begin{proof}
This is an easy consequence of the fact that the edge-vertex $v_e$ has weight~$d$, and the two edges of $M'$ incident to $v_e$ have weight $d-1$.
\end{proof}
Let $X$ be a $d/(d-1)$-orientation of $M'$, and let $Y$ be the induced orientation of $M$. We denote by $\tau(X)$ the admissible \zb-biorientation of $M$ obtained from $Y$ by replacing the weights $-1$ on special inner edges by $-2$.

\begin{figure}
\begin{center}
\includegraphics[width=10cm]{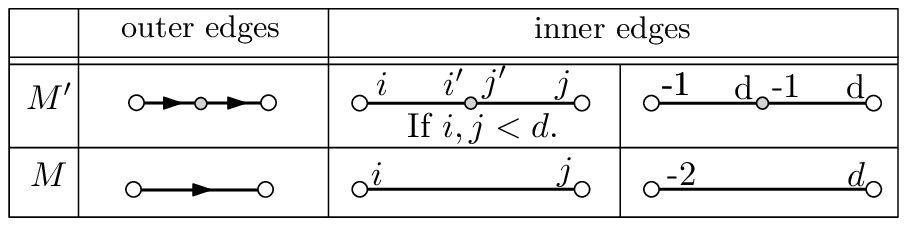}\\[1cm]
\includegraphics[width=\linewidth]{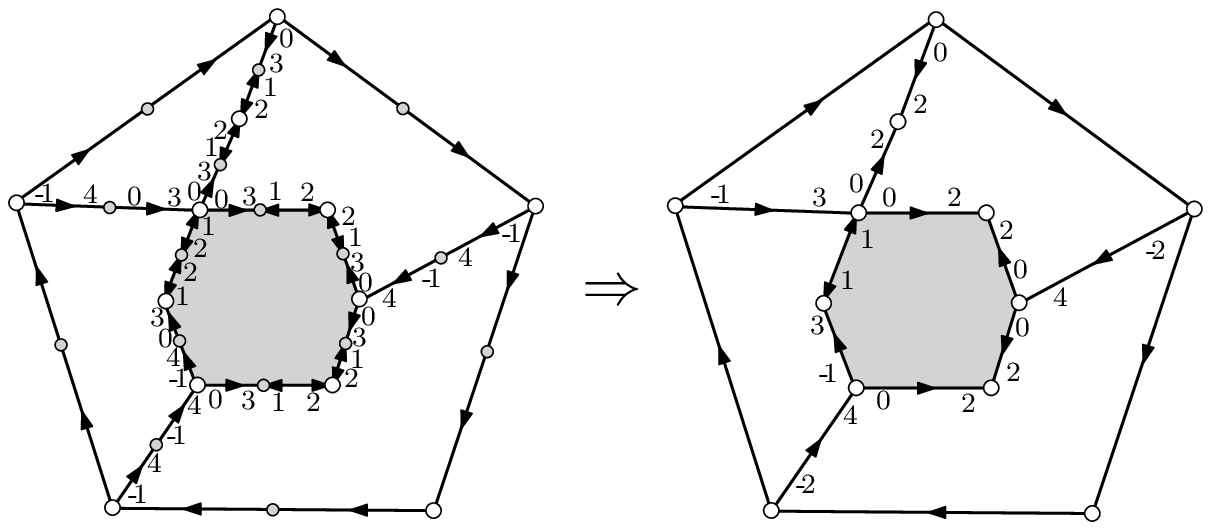}
\end{center}
\caption{Top: The mapping $\tau$ between $d/(d-1)$-orientations of the bipartite map $M'$ (obtained from $M$ by inserting a vertex in the middle of each edge) and \ddm-orientations of $M$.
 Bottom: example in the case $d=4$, with type $(p,q)=(5,6)$.}
\label{fig:reduc_gen}
\end{figure}

\begin{lem}\label{lem:taubij}
The mapping $\tau$ is a bijection between the $d/(d-1)$-orientations of $M'$ and the \ddm-orientations of $M$. Moreover, a $d/(d-1)$-orientation $X$ is suitable if and only if $\tau(X)$ is suitable.
\end{lem}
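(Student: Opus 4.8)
The plan is to prove the two assertions separately: first that $\tau$ is a bijection, by exhibiting its inverse and checking that the two systems of conditions match edge by edge, and then that $\tau$ preserves suitability, by showing that $X$ and $\tau(X)$ carry essentially the same directed structure. Throughout I will use that in a \zb-biorientation a half-edge is ingoing exactly when its weight is positive, and that a vertex weight only counts ingoing half-edges.

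For the bijection, I would first verify that $\tau(X)$ is a \ddm-orientation of $M$. Condition~(i) is immediate from Claim~\ref{claim:d-1}: a non-special inner edge already has weight $i+j=d-2$ in the induced orientation $Y$, while a special edge has weight $d-1$ in $Y$ and weight $d-2$ after $\tau$ turns its $-1$ into $-2$; moreover every outgoing half-edge then has weight in $\{0,-1,-2\}$. Condition~(ii) holds because contracting the edge-vertices preserves the ingoing half-edges incident to each original vertex together with their weights, and because the $\tau$-adjustment only lowers the weight of an outgoing half-edge; neither operation changes the weight of an original vertex, which therefore stays equal to $d$. Admissibility transports directly, since an outer edge of $M$ is subdivided into two $1$-way edges of weights $0,1$ and recontracted into a single $1$-way edge of weights $0,1$. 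The only delicate step is conditions~(iii)–(iv) on face weights, which I would treat by comparing, clockwise around a non-root face $f$, the quantity $w_{M'}(f)$ with $w_{\tau(X)}(f)$. In $M'$ each boundary edge of $f$ contributes, besides the forward half-edge at the original vertex, the forward half-edge at the interposed edge-vertex $v_e$; a short case check using that $v_e$ has weight $d$ shows this extra contribution vanishes except on special edges of one of the two orientations, where it equals $-1$. On the other side, the passage to $\tau(X)$ lowers by exactly $1$ the forward half-edge weight contributed by precisely those special edges. The two effects cancel, so $w_{\tau(X)}(f)=w_{M'}(f)$, and since $\deg_{M'}(f)=2\deg(f)$, condition~(iii) of the $d/(d-1)$-orientation becomes condition~(iii) of the \ddm-orientation; condition~(iv) for the inner root-face is identical. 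Running this computation backwards defines $\tau^{-1}$ (replace each $-2$ on a special edge by $-1$, then split every edge $\{u,w\}$ by inserting $v_e$ with the half-edges facing $v_e$ given weights $d-1-i$ and $d-1-j$), and checking edge by edge that $\tau\circ\tau^{-1}$ and $\tau^{-1}\circ\tau$ are the identity finishes the bijection.

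For the suitability statement, the guiding observation is that contraction and the $\tau$-adjustment alter only weights, and only on outgoing half-edges, so they never change whether a half-edge is ingoing or outgoing. Hence $X$, the induced orientation $Y$, and $\tau(X)$ all share the same directed structure on the original vertices. I would make this precise through a traversability dictionary: an edge $e=\{u,w\}$ is traversable from $u$ to $w$ in $M$ if and only if the path $u\to v_e\to w$ is traversable in $M'$. Since traversing onto a vertex requires an ingoing half-edge there, this reduces, on a non-special edge, to the identity $d-1-i=1+j$ (so the half-edge of $\{u,v_e\}$ at $v_e$ is ingoing whenever the half-edge of $e$ at $w$ is), and to a direct verification on the two types of special edges. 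From the dictionary, directed paths among original vertices of $M$ correspond to directed paths in $M'$, and any edge-vertex $v_e$ is reached as soon as one of $u,w$ is, because $v_e$ has weight $d>0$ and hence an ingoing half-edge. As admissibility makes the outer contour a directed cycle, accessibility from every outer vertex is equivalent to accessibility of all vertices, so the dictionary gives that $X$ is accessible iff $\tau(X)$ is. The same dictionary, combined with the facts that simple cycles of $M'$ and of $M$ correspond (edge-vertices have degree $2$) and that subdivision preserves the interior of a cycle, shows that $X$ has a counterclockwise circuit iff $\tau(X)$ does; thus $X$ is minimal iff $\tau(X)$ is. Together these give that $X$ is suitable iff $\tau(X)$ is suitable.

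I expect the main obstacle to be the face-weight bookkeeping for conditions~(iii)–(iv). One must set up the clockwise boundary walk carefully and track exactly which forward half-edge (at the original vertex, or at the inserted edge-vertex) is outgoing, so that the cancellation between the edge-vertex contribution and the $\tau$-adjustment is accounted for correctly, including on boundary edges of $f$ that happen to be outer edges of $M$. The traversability check on special edges is the only other point requiring genuine care; everything else is routine.
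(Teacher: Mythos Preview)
Your proposal is correct and follows essentially the same route as the paper: verify that $\tau(X)$ satisfies the \ddm\ conditions (using Claim~\ref{claim:d-1} and the invariance of face weights), exhibit the obvious inverse, and transfer minimality and accessibility via the traversability correspondence $e$ traversable $u\to w$ in $M$ iff $u\to v_e\to w$ traversable in $M'$. The paper compresses the face-weight check to the single sentence ``the weights of faces are preserved by $\tau$''; your edge-by-edge accounting (the $v_e$-contribution being $-1$ exactly when $j=d$, matched by the $-1\mapsto-2$ adjustment on the forward half-edge at $u$) is precisely the case analysis that justifies that sentence.
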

\begin{proof}
Let $X$ be a $d/(d-1)$-orientation. Claim~\ref{claim:d-1} implies that $\tau(X)$ satisfies Conditions (i) and (ii) of \ddm-orientations. Moreover Condition (iii) follows from the fact that the weights of faces are preserved by $\tau$. Thus $\tau(X)$ is a \ddm-orientation.
Clearly $\tau$ is a bijection; indeed a $d/(d-1)$-orientation of $M'$ is completely determined by its contraction (the weight of every edge is fixed), and the rule (replacing each edge $(-1,d)$
by an edge $(-2,d)$) to go from contracted $d/(d-1)$-orientations to $d/(d-2)$-orientations of $M$ 
is invertible. 

It remains to prove the second assertion. Let $X$ be any $d/(d-1)$-orientation of $M'$. For any inner edge $e=\{u,v\}$ of $M$, it is easy to see that the edge $e$ can be traversed from $u$ to $v$ in $\tau(X)$ (that is, $e$ is 2-way or 1-way toward $v$) if and only if the path of $M'$ made of the edges $e_1=\{u,v_e\}$ and $e_2=\{v_e,v\}$ can be traversed from $u$ to $v$ in $X$. Therefore, the orientation $\tau(X)$ is minimal if and only if $X$ is minimal. Moreover, for any edge $e=\{u,v\}$ of $M$, it is either possible to traverse $e_1=\{u,v_e\}$ from $u$ to $v_e$, or to traverse $e_2=\{v_e,v\}$ from $v$ to $v_e$ (since the weight of $v_e$ is positive). Hence, the orientation $\tau(X)$ is accessible from a vertex $v_0$ if and only if $X$ is accessible from $v_0$. Thus a $d/(d-1)$-orientation $X$ is suitable if and only if $\tau(X)$ is suitable.
\end{proof}

We now suppose that $d\geq 2$ and prove that $M$ admits a unique suitable \ddm-orientation. Since $d\geq 2$, it has been proved in subsection~\ref{sec:proof_bip} that $M'$ admits a unique suitable $d/(d-1)$-orientation. Therefore, Lemma~\ref{lem:taubij} implies that the map $M$ admits a unique suitable \ddm-orientation. This concludes the proof that for all positive integers $d,p,q$ with $2\leq d$ and $p\leq q$, every map in $\cA\pqd$ admits a unique suitable \ddm-orientation. The necessity of being in $\cA\pqd$ has been proved in Lemma~\ref{lem:necd}. This concludes the proof of Theorem~\ref{thm:gen_annular} in the case $d\geq 2$.

 \subsection{Existence and uniqueness for $b=1$ and $d=1$} \label{sec:proofb=1}

We first prove the case $b=1$ of Theorem~\ref{thm:bip_annular}. Let $M$ be a bipartite map in $\cA_2^{(2r,2s)}$. The case $d=2$ of Theorem~\ref{thm:gen_annular} (which has already been proved) implies that $M$ admits a unique suitable $2/0$-orientation $O$. Now, in order to prove that $M$ has a unique suitable $1/0$-orientation, it suffices to show that every inner half-edge of $O$ has even weight. Indeed, in this case one obtains a suitable $1/0$-orientation by dividing the inner weights of $O$ by two (and it is unique because any suitable $1/0$-orientation gives a suitable $2/0$-orientation by doubling the weights). In order to prove that the inner weights of $O$ are even, we consider the $2$-branching mobile $T$ of type $(2r,2s)$ associated to the map $M$ endowed with $O$ (we are using the case $d=2$ of Theorem~\ref{thm:girthd_annular} which has already been proved). We say that an edge of $T$ is odd if one of the half-edges has odd weight; in this case both half-edges have in fact odd weights since the weight of an edge is 0. It is easy to see that every vertex of $T$ has even weight (since the black vertices have even degree), so is incident to an even number of odd edges. This implies that the set of odd edges of $T$ is empty (since any non-empty forest has at least one leaf). The weight of every half-edge of $T$ is even, hence the weight of every inner half-edge of $O$ is even. This completes the proof of the case $b=1$ of Theorem~\ref{thm:bip_annular}.

We now establish the case $d=1$ of Theorem~\ref{thm:gen_annular} by using the same strategy as in Subsection~\ref{sec:dgeq2}. We want to prove that, for positive integers $p,q$ with $p\leq q$, 
a map $M\in\cA_{1}^{(p,q)}$ admits a unique suitable $1/(\!-\!1)$-orientation. We consider the bipartite map $M'$ obtained by inserting a vertex at the middle of each edge. As we have just proved (case $b=1$), the bipartite map $M'$ admits a unique suitable $1/0$-orientation. Therefore, Lemma~\ref{lem:taubij} implies that the map $M$ admits a unique suitable $1/(\!-\!1)$-orientation. This concludes the proof of the case $d=1$ of Theorem~\ref{thm:gen_annular}.

\bibliographystyle{plain}
\bibliography{biblio_gir}
\label{sec:biblio}
\end{document}